\newtheorem{thm}{Theorem}[section]
\newtheorem{pro}[thm]{Proposition}
\newtheorem{cor}[thm]{Corollary}
\theoremstyle{definition}
\newtheorem{defn}[thm]{Definition}
\newtheorem{hyp}[thm]{Hypothesis}
\theoremstyle{remark}
\newtheorem{remark}[thm]{Remark}
\numberwithin{equation}{section}
\renewcommand{\hat}[1]{\widehat{#1}}
\renewcommand{\tilde}[1]{\widetilde{#1}}
\newcommand{\pa}[1]{{\left(#1\right)}}
\newcommand{\abs}[1]{{\left|#1\right|}}
\newcommand{\norm}[1]{{\left\|#1\right\|}}
\newcommand{\ra}{\rightarrow}
\newcommand{\N}{\mathbb{N}}
\newcommand{\R}{\mathbb{R}}
\newcommand{\Rd}{\mathbb R^d}
\newcommand{\Rm}{\mathbb R^m}
\newcommand{\uu}{{\bm{u}}}
\newcommand{\ww}{{\bm{w}}}
\newcommand{\f}{{\bm f}}
\newcommand{\g}{{\bm g}}
\newcommand{\eqsys}[1]{{\left\{\begin{array}{ll}#1\end{array}\right.}}
\newcommand{\eps}{\varepsilon}
\newcommand{\A}{{\operatorname{\mathcal{A}}}}
\newcommand{\OU}{{\operatorname{\mathcal{L}}}}
\DeclareMathOperator{\diver}{\operatorname{div}}
\begin{document}

\frenchspacing

\title[Regularity results for elliptic and parabolic systems of PDEs]{Regularity results for elliptic and parabolic systems of partial differential equations}

\author[L. Angiuli]{{Luciana Angiuli}}
\author[S. Ferrari]{{Simone Ferrari}}
\author[L. Lorenzi]{{Luca Lorenzi}$^*$}

\thanks{$^*$Corresponding author.}

\address[L. Angiuli and S. Ferrari]{Dipartimento di Matematica e Fisica ``Ennio de Giorgi'', Universit\`a del Salento, Via per Arnesano sn, 73100 Lecce, Italy.}
\email{luciana.angiuli@unisalento.it}
\email{simone.ferrari@unisalento.it}

%\address[S. Ferrari]{Dipartimento di Matematica e Fisica ``Ennio de Giorgi'', Universit\`a del Salento, Via per Arnesano sn, 73100 Lecce, Italy.}
%\email{simone.ferrari@unisalento.it}

\address[L. Lorenzi]{Plesso di Matematica, Dipartimento di Scienze Matematiche, Fisiche e Informatiche, Universit\`a degli Studi di Parma, Parco Area delle Scienze 53/A, 43124, Italy.}
\email{luca.lorenzi@unipr.it}

%\dedicatory{This paper is dedicated to Professor ABCD}

\begin{abstract}
We study Cauchy problems associated to elliptic operators acting on vector-valued functions and coupled up to the first-order. We prove pointwise estimates for the spatial derivatives of the semigroup associated to these problems in the space of bounded and continuous functions over $\R^d$. Consequently, we deduce relevant regularity results  both in  H\"older and Zygmund spaces and in Sobolev and Besov spaces.
\end{abstract}

\keywords{Vector-valued elliptic operators coupled up to the first order, vector-valued semigroups, Schauder estimates, Lebesgue 
 regularity}
\subjclass[2020]{35J47, 35K45, 47D06}

\maketitle
\section{Introduction}
Schauder estimates and more generally regularity results for elliptic and parabolic problems is an issue which has a great relevance in the analysis of partial differential equations both for its own interest and for the applications: in many cases mathematical models lead to nonlinear (systems of) partial differential equations, where the nonlinearity depends on the derivatives of the maximum order of the unknown function(s).

The first results in this direction have been proved in the sixties of the last century for elliptic and parabolic equations (and systems) with smooth and bounded coefficients (see e.g., \cite{eidelman,friedman,krylov-1,krylov-2,LSU} for a detailed exposition of such optimal regularity results). Later on, in the eighties, attention has been paid to the case of non autonomous equations with nonsmooth in time coefficients, with the pioneering papers by Kruzhkov, Castro and Lopes \cite{kruzhkov-3,kruzhkov-1, kruzhkov-2}. More recently, the results in the previous papers have been extended in \cite{bramanti,lorenzi-0}. One common feature in all the previous papers is the boundedness of the coefficients of the operators considered.

Starting from the nineties, much attention has been devoted to elliptic and parabolic equations with unbounded coefficients, since these equations are strictly connected to stochastic partial differential equations and appear naturally in models of finance (such as in the famous Black and Scholes model) and also in physics (see e.g., \cite{Freid} and the references here below). In the case of a single elliptic or parabolic equation, Schauder estimates have been obtained with different assumptions and different techniques  in the case of nondegenerate equations \cite{bertoldi-lorenzi} and in the case of degenerate equations \cite{daprato,farkas-lorenzi,lorenzi-1,lorenzi-2,saintier}, using analytic tools, and in \cite{priola-1}, using probabilistic tools, which enlighten the strict connection between elliptic and parabolic equations with unbounded coefficients and stochastic analysis.
Even if the previous papers are concerned with autonomous equations, in most cases the results can be generalized to nonautonomous equations and provide Schauder optimal regularity results in space. 

Already in the autonomous case, one of the main differences with respect to the classical case where the coefficients are bounded (and smooth enough), is the fact that in general one cannot expect optimal regularity results with respect to time. This situation is clearly highlighted by the Onstein--Uhlenbeck operator $\bm{\OU}$, whose associated semigroup admits an explicit integral formula through a Gaussian kernel: the solution to the parabolic equation $D_t\uu=\bm{\OU}\uu$, which satisfies the initial condition $\uu(0,\cdot)=\f$ does not gain regularity with respect to time even if $\f$ is smooth.
Based on this remark, in the nonautonomous case, the results in \cite{bramanti,kruzhkov-3,kruzhkov-1,kruzhkov-2,lorenzi-0} has been extended in \cite{lorenzi-3,lorenzi-4} to some class of elliptic operators whose coefficients are H\"older continuous with respect to the spatial variables and just bounded and measurable with respect to time. As a consequence of this fact, the interest confines in investigating regularity in spatial variables. In this respect, the analysis of scalar elliptic and parabolic equations with unbounded coefficients is at a good stage. On the other hand, the analysis of systems of elliptic and parabolic equations with unbounded coefficients is still at a preliminary stage, notwithstanding the relevance of models which lead to these kind of systems (e.g., Nash equilibria to stochastic differential games \cite{AALT}, the weighted $\overline{\partial}$-problem \cite{haslinger}, the time-dependent Born--Openheimer theory \cite{betz}, Navier--Stokes equations \cite{hansel-rhandi}).
The analysis has been performed in the setting of bounded and continuous functions over $\R^d$ and in $L^p$-spaces (mainly) related the Lebesgue measure. In the first setting, one fundamental tool to study the homogeneous equation $D_t\uu=\bm{\A}\uu$ (and consequently to associate a semigroup of bounded operators with the operator $\bm{\A}$) in the scalar case is a variant of the classical maximum principle, which is used both to provide existence and uniqueness of a classical solution to the previous parabolic equation, which is bounded in every strip $[0,T]\times\R^d$ and subject to the initial condition $\uu(0,\cdot)=\f$, $\f$ being a bounded and continuous function over $\R^d$.
As it is known, in the case of systems of elliptic equations (even with bounded coefficients) no maximum principles can be proved when the elliptic equations are coupled up to the second-order. More precisely, a necessary and sufficient condition for a maximum modulus principle to hold is available in literature (see e.g., \cite{kresin}). To the best of our knowledge, the results so far available in the literature, in the setting of bounded and continuous vector-valued functions, cover the case of weakly coupled systems (where the diffusion may also change equation by equation)
\cite{AALT,delmonte-lorenzi} and systems coupled up to the first-order with the same diffusion part in all the equations \cite{AD,AL}.

Different is the case of the $L^p$-setting, where the well posedness of the equation $D_t\uu=\bm{\A}\uu$ in the $L^p$-spaces of the Lebesgue measure has been proved also in situations where the equations are coupled up to the second-order (see \cite{AngLorMan}). The well posedness of the previous parabolic equations allows to associate a strongly continuous semigroup to the operator $\bm{\A}$.
In particular, under suitable assumptions, in \cite{AngLorMan} a complete description of the domain of the infinitesimal generator is available. It is worth to mention that, even under weaker assumptions, it is possible to prove the well posedness in $L^p(\R^d;\R^m)$ of the equation $D_t\uu=\bm{\A}\uu$ (see \cite{ALP}) 
starting from the well posedness in the space of bounded and continuous functions. Clearly, this strategy is confined to the case when the coupling between the equations occurs at most at the first-order. Moreover, this extension technique does not provide precise information on the domain of the infinitesimal generator of
the associated strongly continuous semigroup. For this reason, it is important to find different arguments to obtain at least a partial characterization of the domain of the infinitesimal generator.

In this paper, we consider a family of second-order uniformly elliptic operators $\bm{\mathcal A}$ (having all the same principal part), defined on smooth enough functions ${\bm w}:\R^d\to \R^m$ by
\begin{equation*}
(\bm{\mathcal A}{\bm w})(x)=\sum_{i,j=1}^dq_{ij}(x)D^2_{ij}{\bm w}(x)+\sum_{j=1}^d B_j(x)D_j{\bm w}(x)+C(x){\bm w}(x),\qquad\;\, x \in\R^d,
%\label{operat-A}
\end{equation*}
where the entries $q_{ij}$ of the matrix-valued function $Q$ are smooth enough functions, possibly unbounded and the infimum over $\R^d$ of the minimum eigenvalue of the matrix $Q(x)$ is positive.
As far as $B_j$ ($j=1, \ldots,d$) and $C$ are concerned, they are $m\times m$ matrices whose elements are smooth enough and (possibly) unbounded real-valued functions, see Hypotheses \ref{base}. Such a class of operators have been already considered in \cite{AALT} where, among other results, it has been proved that, under suitable assumptions, the Cauchy problem
\begin{equation}\label{cp-intro}
\left\{\begin{array}{ll}
D_t\uu(t,x)=(\bm{\mathcal A}\uu)(t,x),\qquad\;\, &(t,x)\in (0,\infty)\times\R^d,\\[1mm]
\uu(0,x)=\f(x),& x \in \Rd,
\end{array}
\right.
\end{equation}
is well posed in the space of $\R^m$-valued bounded and continuous functions over $\R^d$, which means that for every $\f$ in such a space, the above problem admits a unique classical solution, which is bounded in every strip $[0,T]\times\R^d$. As it has been already remarked, the well posedness of the Cauchy problem \eqref{cp-intro} is connected to the possibility of associating a semigroup of linear and bounded operators $\{\bm{T}(t)\}_{t\ge 0}$ in $C_b(\R^d;\R^m)$ with the operator $\bm{\A}$.

Strengthening the assumptions on the coefficients of the operator $\bm{\A}$, we prove 
pointwise estimates for the spatial derivatives of the function $\bm{T}(t)\f$, when $\f$ is bounded and continuous in $\R^d$, or even smoother. The behaviour of such estimates
with respect to the variable $t$, when $t$ approaches zero, is the same as in the classical case of bounded coefficients.

Starting from these estimates, we prove several remarkable consequences. First of all, we prove optimal (in space) Schauder regularity results for the solution to the nonhomogeneous Cauchy problem
\begin{equation}\label{cp-nnohom}
\left\{\begin{array}{ll}
D_t\uu(t,x)=(\bm{\mathcal A}\uu)(t,x)+\bm{g}(t,x),\qquad\;\, &(t,x)\in (0,T)\times \Rd,\\[1mm]
\uu(0,x)=\f(x),& x \in \Rd,
\end{array}
\right.
\end{equation}
and the elliptic problem $\lambda\uu-\bm{\A}\uu=\f$.

Next, we move to the $L^p$-setting (related to the Lebesgue measure) and show that, under an additional condition, the restriction of the semigroup $\{\bm{T}(t)\}_{t\ge 0}$ to the space of $\R^m$-valued continuous and compactly supported functions over $\R^d$ extends to a strongly continuous semigroups on the $L^p(\R^d;\R^m)$-scale for any $p\in [2,\infty)$. Such semigroups are
consistent. Using the pointwise estimates proved, we show that the semigroup regularizes the functions to which it is applied. This allows us to prove regularity results for the Cauchy problem \eqref{cp-nnohom} and the elliptic problem $\lambda\uu-\bm{\A}\uu=\f$ when $\f$ and
$\bm{g}$ belong to suitable subspaces of $L^p(\R^d;\R^m)$. In particular, from such results we obtain a partial characterization of the domain of the infinitesimal generator of the semigroup
$\{\bm{T}(t)\}_{t\ge 0}$ in $L^p(\R^d;\R^m)$, showing that it is continuously embedded in
the Besov space $B^{\sigma}_{p,p}(\R^d;\R^m)$
for every $\sigma\in [0,2)$.

The paper is organized as follows. In Section \ref{sect-2}, we introduce the basic assumptions and collect some preliminary results, taken from \cite{AALT} and \cite{AngLorCom}, as well as some basic facts about real interpolation theory. Section \ref{sect-3} is devoted to the pointwise estimates, which are obtained adapting the Bernstein method.
Sections \ref{sect-4} and \ref{sect-5} collect consequences of the pointwise estimates in the setting of bounded and continuous functions (Section \ref{sect-4}) and in the $L^p(\R^d;\R^m)$ context (Section \ref{sect-5}).
Finally, in Section \ref{sect-6} we provide concrete examples of systems of elliptic equations with unbounded coefficients to which the results of the paper apply and, in the Appendix, we collect a regularity result of solutions to parabolic systems of partial differential equations and interior Schauder estimates that are used in the paper.

\smallskip

{\bf General notation.} 
For any $\alpha\ge 0$, we denote by $[\alpha]$ its integer part and by $\{\alpha\}$ its fractional part.
We denote  by $\langle\cdot, \cdot\rangle_{\R^k}$ and $\abs{\cdot}_{\R^k}$, respectively, the Euclidean inner product and the Euclidean norm in $\mathbb R^k$. If no confusion arises, we will drop the subscript. For a given matrix-valued function $M:\Rd\ra \R^{m^2}$, we denote by $\lambda_{M}$ and $\Lambda_{M}$ real valued functions such that
$\lambda_M(x)|\xi|^2\le \langle M(x)\xi, \xi\rangle \le \Lambda_M(x)|\xi|^2$
for any $x \in \Rd$ and $\xi \in \R^m$. We denote by $\chi_A$ the characteristic function of the set $A \subseteq \R$. By $B(0,r)$ and $\partial B(0,r)$ we denote the ball of center the origin and radius $r$ and its boundary.

Vector-valued functions will be displayed in bold style. The components of $\bm{u}:\Omega\subseteq\R^d\to\R^m$ are denoted by
$u_1,\ldots,u_m$. $D^1\uu$ denotes the Jacobian matrix of $\uu$, whereas $D^2\uu$ (resp. $D^3\uu$) denotes the vector whose entries are the matrices (resp. the tensors) $D^2u_1,\ldots,D^2u_m$ (resp. $D^3u_1,\ldots,D^3u_m$).
To shorten the notation, we write
$|D^1\uu|^2=\sum_{j=1}^m|\nabla u_j|^2$,
$|D^2\uu|^2=\sum_{j=1}^m|D^2u_j|^2$ and
$|D^3\uu|^2=\sum_{j=1}^m|D^3u_j|^2$. 
Given a multi-index $\gamma$, we use the notation $\partial^{\gamma}$ to denote the partial derivative $\partial^{\gamma}/\partial x^{\gamma}$.

{\bf Function spaces.} 
If $X$ and $Y$ are Banach spaces we denote by $\mathcal{L}(X;Y)$ the Banach space of the bounded and linear operators from $X$ to $Y$ with its natural norm. If $X=Y$ we simply write $\mathcal{L}(X)$.

$C^k_b(\Rd;\mathbb R^m)$ is the space  of continuously differentiable functions on $\Rd$, up to the $k$-th order, which are  bounded  together with their derivatives.
We assume that the reader is familiar with the classical H\"older space $\mathcal{C}^{\alpha}_b(\R^d;\R^m)$ ($\alpha\in (0,1)$) and with its seminorm $[\cdot]_{\mathcal{C}^{\alpha}_b(\R^d;\R^m)}$. For a non integer $\alpha>1$,
$\mathcal{C}^{\alpha}_b(\R^d;\R^m)$ is the subset of 
$C^{[\alpha]}_b(\R^d;\R^m)$ of functions $\f$ such that $D^{\beta}\f\in \mathcal{C}^{\{\alpha\}}_b(\R^d;\R^m)$ for ever multi-index $\beta$ with length $[\alpha]$. When $m=1$, we simply write
$C^k_b(\R^d)$ and $\mathcal{C}^{\alpha}_b(\R^d)$.
We use the subscript ``$c$'' when we need to enlighten that we deal with compactly supported functions. Similarly, the subscript ``loc'' stands for locally. For instance, 
$\mathcal{C}^{\alpha}_{\rm loc}(\R^d)$ is the space of functions $f:\R^d\to\R$ whose restriction to every compact set of $\R^d$ is $\alpha$-H\"older continuous. We also use parabolic spaces of continuous functions. By $\mathcal{C}^{0,\alpha}_b(I\times\Omega)$, when $\alpha\in (0,\infty)$, $I$ is an interval and $\Omega$ is an open subset of $\R^d$, we denote the set of continuous functions $\f:I\times\Omega\to\R^m$ such that $\|\f\|_{\mathcal{C}^{0,\alpha}_b(I\times\Omega;\R^m)}:=\sup_{t\in I}
\|\f(t,\cdot)\|_{\mathcal{C}^{\alpha}_b(\Omega)}<\infty$. Similarly, for $\alpha,\beta\in (0,1)$, $\mathcal{C}^{\alpha,\beta}_b(I\times\Omega;\R^m)$ is the subset of
$\mathcal{C}^{0,\beta}_b(I\times\Omega;\R^m)$ consisting of functions $\f$ such that $\|\f\|_{\mathcal{C}^{\alpha,\beta}_b(I\times\Omega;\R^m)}:=
\|\f\|_{\mathcal{C}^{0,\beta}_b(I\times\Omega;\R^m)}+\sup_{x\in\Omega}[\f(\cdot,x)]_{\mathcal{C}^{\alpha}_b(I)}<\infty$.
More generally, for every $\alpha,\beta\in (0,\infty)\setminus\N$, 
$\mathcal{C}^{\alpha,\beta}_b(I\times\Omega;\R^m)$ is the set of all the functions $\f:I\times\Omega\to\R^m$, which admit time derivatives up to the order $[\alpha]$ and spatial derivatives up to the order $[\beta]$ and all the existing derivatives belong to $\mathcal{C}^{\{\alpha\},\{\beta\}}_b(I\times\Omega;\R^m)$. This space is normed in the natural way. In this setting, we also use the subscript ``loc'' instead of ``$b$'', when needed.

For every $p\in [1,\infty)$,  $L^p(\Rd;\mathbb R ^m)$  is the classical vector-valued Lebesgue space endowed with the norm
$\|\f\|^p_{L^p(\Rd;\mathbb R ^m)}:=\int_{\Rd} |\f(x)|^pdx$, 
while, for $k\in\mathbb N$, $W^{k,p}(\Rd, \mathbb R^m)$ is the classical vector-valued Sobolev space (i.e., the space of all functions $\uu\in L^p(\Rd;\mathbb R^m)$ whose components have distributional derivatives up to the order $k$, which belong to $L^p(\R^d;\mathbb R)$), endowed with  the usual norm 
$\norm{\cdot}_{k,p}$. When $m=1$, we simply write $L^p(\Rd)$.

Finally, if $(X(\Rd;\Rm),\norm{\cdot}_{X(\Rd;\Rm)})$ is a Banach space, by $L^p((0,T);X(\Rd;\Rm))$ we denote the set of functions $g:(0,T)\times \Rd \to \Rm$ such that $g(t,\cdot)$ belongs to $X(\Rd;\Rm)$ for any $t \in (0,T)$ and $\|g\|_{L^p((0,T);X(\Rd;\Rm))}^p:=\int_0^T\|g(t,\cdot)\|_{X(\Rd;\Rm)}^pdt$ is finite.

\section{Assumptions and preliminaries}
\label{sect-2}
The following are standing assumptions that guarantee existence and uniqueness of a locally in time bounded classical solution to the Cauchy problem 
\begin{equation}\label{cp}
\left\{\begin{array}{ll}
D_t\uu(t,x)=(\bm{\mathcal A}\uu)(t,x),\qquad\;\, &(t,x)\in (0,\infty)\times \Rd,\\[1mm]
\uu(0,x)=\f(x),& x \in \Rd,
\end{array}
\right.
\end{equation}
when $\f\in C_b(\R^d;\R^m)$ and the system of elliptic operators 
$\bm{\mathcal A}$ is defined on smooth functions ${\bm w}:\Rd\to \R^m$ by
\begin{equation*}
(\bm{\mathcal A}{\bm w})(x)=\sum_{i,j=1}^dq_{ij}(x)D^2_{ij}{\bm w}(x)+\sum_{j=1}^d B_j(x)D_j{\bm w}(x)+C(x){\bm w}(x),\qquad\;\, x \in \Rd.
%\label{operat-A}
\end{equation*}

\begin{hyp}\label{base}
\begin{enumerate}[\rm(i)]
\item 
The coefficients $q_{ij}=q_{ji}$ and $C_{hk}$ (the entries of the matrix $C$) belong to $\mathcal{C}^{1+\alpha}_{\rm loc}(\R^d)$ for some $\alpha\in (0,1)$, any $i,j=1,\ldots,d$ and any $h,k=1,\ldots,m$;

\item 
$\lambda_0:=\inf_{\Rd}\lambda_Q>0$;

\item 
for any $i=1,\ldots,d$ there exist functions $b_i\in \mathcal{C}^{1+\alpha}_{\rm loc}(\R^d)$ and $\hat B_i \in \mathcal{C}^{1+\alpha}_{\rm loc}(\R^d; \R^{m^2})$, with null entries on the main diagonal, such that 
$B_i=b_iI_m+\hat B_i$
and $\max_{i=1,\ldots,d}|\hat{B}_i(x)|\leq \hat{\beta}_0(x)$, for all $x\in\R^d$ and some function $\hat{\beta}_0$ which satisfies the condition 
\begin{eqnarray*} 
H:=\sup_{x\in\Rd}\left((2-\nu)\Lambda_C(x)+d\frac{(\hat{\beta}_0(x))^2}{2\lambda_Q(x)}\right)<\infty
\end{eqnarray*}
for some constant $\nu\in [0,2)$;\label{puntoIII}
\item
there exist ${\mu}\in \R$ and a positive $($Lyapunov$)$ function
$\varphi\in C^2(\R^d)$, blowing up as $|x|$ tends to $\infty$, such that
$\sup_{\R^d}(\A_{\nu}\varphi-{\mu}\varphi)<\infty$, where 
\begin{align}\label{Atilde}
\A_{\nu}={\rm Tr}(QD^2)+\langle {\bm b}, \nabla\rangle+\nu\Lambda_C,
\end{align}
with ${\bm b}=(b_1,\ldots,b_d)$ and $\nu$ is the constant introduced in \eqref{puntoIII};
\item 
there exist a positive constant $\kappa$  such that
\begin{align}\label{maxaut<minaut}
\Lambda_Q(x)\le \kappa(1+|x|^2)\lambda_Q(x),\qquad\;\, x\in\Rd,
\end{align}
and $n_0\in\N$ such that 
\begin{align*}
{\rm Tr}(Q(x))-\frac{\langle Q(x)x,x\rangle}{|x|^2}+\langle \bm{b}(x),x\rangle\le 0,\qquad\;\,x\in\R^d\setminus B(0,n_0).
\end{align*}
\end{enumerate}
\end{hyp}

To state the additional set of assumptions, we introduce the functions 
$\xi_i, \beta_{\ell}, \widehat\beta_i, \gamma_i:\R^d\to\R$, defined as
\begin{align*}
%\begin{array}{ll}    
&\xi_i(x):=\displaystyle\max_{|\alpha|=i}|D^{\alpha}Q(x)|,&  &\beta_\ell(x)
:=\displaystyle\max_{\substack{j=1,\ldots,d\\ |\alpha|=\ell}}|D^{\alpha}b_j(x)|,\\%[3mm]
&\hat\beta_i(x):=\displaystyle\max_{\substack{j=1,\ldots,d\\ |\alpha|=i}}|D^{\alpha}\hat{B}_j(x)|,& &
\gamma_i(x):=\displaystyle\max_{|\alpha|=i}|D^{\alpha}C(x)|
%\end{array}
\end{align*}
for every $x\in\R^d$ and $i=1,2,3$, $\ell=2,3$, provided that the derivatives involved in the previous definitions exist.

\begin{hyp}\label{hyp-derivata-1}
 \begin{enumerate}[\rm (i)]
\item 
There exist constants  $\alpha_1,\mu_1\in [0,2]$ and $A_1, A_2>0$ such that for any $x\in\R^d$ it holds
\begin{align*}
(\xi_1(x))^{\alpha_1}\le A_1\lambda_Q(x),\qquad\;\,
(\widehat\beta_1(x))^{\mu_1}\le A_2\lambda_Q(x).
\end{align*}
\item 
there exist $\tau_1\in [0,2]$ and
constants $M_0>\frac{d}{2}$ and
$N_1,N_2>0$ such that
\begin{align*}
\;\;\;\;\;\;\;\;&\sup_{\R^d}\Big ((2-\nu)\Lambda_C+M_0\widehat\beta_0^2\lambda_Q^{-1}+N_1\gamma_1^{\tau_1}\Big)<\infty;\\[2mm]
&\sup_{\R^d}\Big (\Lambda_{D^1{\bm b}}+\frac{2-\nu}{2}\Lambda_C
+\frac{d}{2}\big (\widehat\beta_0\lambda_Q^{-\frac{1}{2}}+\sqrt{A_1}\xi_1^{1-\frac{\alpha_1}{2}}\big )^2
+N_2\big (
\hat{\beta}_1^{2-{\mu_1}}+\gamma_1^{2-\tau_1}\big )\Big )<\infty.
\end{align*}
Here $\nu$ is the constant introduced in Hypotheses \ref{base}\eqref{puntoIII}.
\end{enumerate}
\end{hyp}

\begin{hyp}\label{hyp-derivata-2}
 \begin{enumerate}[\rm (i)]
\item Assume Hypothesis \ref{hyp-derivata-1}(i) is satisfied and that the coefficients $q_{ij}$, $b_i$, $C_{hk}$ belong to $\mathcal{C}^{2+\alpha}_{\rm loc}(\Rd)$ and $\hat B_i \in \mathcal{C}_{\rm loc}^{2+\alpha}(\R^d; \R^{m^2})$ for any $i,j=1, \ldots,d$ and any $h,k=1, \ldots,m$;
\item 
there exist $\alpha_2\in [0,2]$ and a positive constant $A_3$ such that for any $x\in\R^d$ it holds that
\begin{align*}
(\xi_2(x))^{\alpha_2}\le A_3\lambda_Q(x).
\end{align*}
\item 
there exist constants $\rho_2$, $\mu_2$, $\tau_j$ ($j=1,2$) in $[0,2]$,
constants $M_0>\frac{d}{2}$, $M_1>\frac{d}{3}$ and
$N_1,N_2,N_3>0$ such that
\begin{align*}
\;\;\;\;\;\;\;\;&\sup_{\R^d}\Big ((2-\nu)\Lambda_C+M_0\frac{\hat\beta_0^2}{\lambda_Q}+N_1(\gamma_1^{\tau_1}+\gamma_2^{\tau_2})\Big)<\infty;\\[2mm]
&\sup_{\R^d}\Big (\Lambda_{D^1{\bm b}}+\frac{2-\nu}{2}\Lambda_C
+M_1\big (\widehat\beta_0\lambda_Q^{-\frac{1}{2}}+\sqrt{A_1}\xi_1^{1-\frac{\alpha_1}{2}}\big )^2
+N_2\big (
\beta_2^{\rho_2}+\hat{\beta}_1^{2-{\mu_1}}+\hat{\beta}_2^{\mu_2}+\gamma_1^{2-\tau_1}\big )\Big )<\infty;\\
&\sup_{\R^d}\Big (\Lambda_{D^1{\bm b}}+\frac{2-\nu}{4}\Lambda_C+\frac{d}{4}\big (\widehat\beta_0\lambda_Q^{-
\frac{1}{2}}+2\sqrt{A_1}\xi_1^{1-\frac{\alpha_1}{2}}\big )^2\\
&\qquad\;\,+N_3\big (\xi_2^{2-\alpha_2}+\beta_2^{2-\rho_2}
+\hat{\beta}_1^{2-{\mu_1}}+\hat{\beta}_2^{2-{{\mu_2}}}+\gamma_1^{\tau_1}
+\gamma_2^{2-{\tau_2}}
\big )\Big )<\infty.
\end{align*}
Here $\nu$ is the constant introduced in Hypotheses \ref{base}\eqref{puntoIII}.
\end{enumerate}
\end{hyp}

\begin{hyp}\label{hyp-derivata-3}
 \begin{enumerate}[\rm (i)]
 \item Assume Hyphotheses \ref{hyp-derivata-1}(i) and \ref{hyp-derivata-2}(ii) are satisfied and that the coefficients $q_{ij}$, $b_i$, $C_{hk}$ belong to $\mathcal{C}_{\rm loc}^{3+\alpha}(\Rd)$ and $\hat B_i \in \mathcal{C}^{3+\alpha}_{\rm loc}(\R^d; \R^{m^2})$ for any $i,j=1, \ldots,d$ and any $h,k=1, \ldots,m$;
\item 
there exist constants $\alpha_3$, $\rho_2$, $\rho_3$, $\mu_2$, $\mu_3$, $\tau_j$ ($j=1,2,3$) in $[0,2]$,
constants $M_0>\frac{d}{2}$, $M_1>\frac{d}{3}$, $M_2>\frac{d}{4}$ and
$N_1,\ldots,N_4>0$
such that
\begin{align*}
&\sup_{\R^d}\Big [(2-\nu)\Lambda_C+M_0\frac{\hat\beta_0^2}{\lambda_Q}+N_1(\gamma_1^{\tau_1}+\gamma_2^{\tau_2}+\gamma_3^{\tau_3})
\Big ]<\infty;\\[2mm]
&\sup_{\R^d}\Big [\Lambda_{D^1{\bm b}}+\frac{2-\nu}{2}\Lambda_C
+M_1\Big (\widehat{\beta}_0\lambda_Q^{-\frac{1}{2}}+\sqrt{A_1}\xi_1^{1-\frac{\alpha_2}{2}}\Big )^2\\
&\qquad\;+N_2\big (
\beta_2^{\rho_2}+\beta_3^{\rho_3}+\hat{\beta}_1^{2-{\mu_1}}+\hat{\beta}_2^{\mu_2}+\hat{\beta}^{{\mu_3}}_3+\gamma_1^{2-\tau_1}+\gamma_2^{{\tau_2}}\big )\bigg ]<\infty;\\
&\sup_{\R^d}\bigg [\Lambda_{D^1{\bm b}}+\frac{2-\nu}{4}\Lambda_C+M_2\Big (\widehat{\beta}_0\lambda_Q^{-\frac{1}{2}}+2\sqrt{A_1}\xi_1^{1-\frac{\alpha_1}{2}}\Big )^2\\
&\qquad\;\,+N_3\big (\xi_2^{2-\alpha_2}\!+\!\xi_3^{\alpha_3}\!+\!\beta_2^{2-\rho_2}\!+\!\hat{\beta}_1^{2-{\mu_1}}\!+\!\hat{\beta}_2^{2-{{\mu_2}}}\!+\!\gamma_1^{\tau_1}
\!+\!\gamma_2^{2-{\tau_2}}
\big )\bigg ]<\infty;\\[2mm]
&\sup_{\R^d}\bigg [\Lambda_{D^1{\bm b}}+\frac{2-\nu}{6}\Lambda_C+\frac{d}{6}\Big (\widehat{\beta}_0\lambda_Q^{-\frac{1}{2}}+3\sqrt{A_1}\xi_1^{1-\frac{\alpha_1}{2}}\Big )^2\\
&\qquad\;\,+N_4\big (\xi_2^{2-\alpha_2}\!+\!\xi_3^{2-\alpha_3}\!+\!\beta_2^{\rho_2}+\beta_3^{2-\rho_3}+\hat{\beta}_1^{2-\mu_1}+\hat{\beta}_2^{\mu_2}+\hat{\beta}^{2-\mu_3}_3+\gamma_1^{2-{\tau_1}}+\gamma_2^{2-\tau_2}+\gamma_3^{2-\tau_3}\big )\Big ]<\infty.
\end{align*}
Here $\nu$ is the constant introduced in Hypotheses \ref{base}\eqref{puntoIII}.
\end{enumerate}
\end{hyp}

Before stating the main result of this section, we recall some known facts concerning the solvability of the Cauchy problems associated to the realization of the elliptic operators $\A_\nu$, introduced in \eqref{Atilde}, and $\bm \A$ in the space of bounded and continuous functions.
We refer the reader to \cite[Theorem 2.3]{AngLorCom} for the proofs.

\begin{pro}
\label{prop-2.5}
Assume that the coefficients $q_{ij}=q_{ji}, b_i$ belong to $\mathcal{C}^{\alpha}_{\rm loc}(\Rd)$ for any $i,j=1, \ldots,d$ and some $\alpha \in (0,1)$ and that Hypotheses $\ref{base} (ii)$ and  $(iv)$ are satisfied. For any $f\in C_b(\Rd)$, the Cauchy problem
\begin{equation}\label{pb-scalare}
\left\{
\begin{array}{ll}
D_t w(t,x)= (\A_{\nu} w)(t,x), &(t,x) \in (0,\infty)\times\Rd,\\[1mm]
w(0,x)=f(x), &x \in \Rd
\end{array}
\right.
\end{equation}
admits a unique bounded classical solution $w_f$, i.e. there exists a unique $w_f \in C^{1,2}((0,\infty)\times \Rd)\cap C([0,\infty)\times \Rd)$ which is bounded in every strip $[0,T]\times\R^d$ and solves \eqref{pb-scalare}. Setting, for any $f\in C_b(\R^d)$, $S_{
\nu}(t)f=w_f(t,\cdot)$ for every $t\in [0,\infty)$, it holds that $\{S_{\nu}(t)\}_{t\geq 0}$ is a positive semigroup in $C_b(\Rd)$ such that $\|S_{\nu}(t)\|_{\mathcal{L}(C_b(\R^d))}\le e^{\Theta_Ct}$ for every $t\in (0,\infty)$, where $\Theta_C$ denotes the supremum over $\R^d$ of the function $\Lambda_C$. Moreover, for every $t>0$ and $x \in \Rd$ there exists a unique finite measure $p(t,x,\cdot)$ such that $p(t,x,\R^d)\le e^{\Theta_Ct}$ for every $t\in (0,\infty)$ and $x\in\R^d$, and
\begin{align}\label{int_rep} 
(S_{\nu}(t)f)(x)= \int_{\Rd}f(y)p(t,x,dy)
\end{align}
for any $t\in (0,\infty)$, $x \in \Rd$ and $f \in C_b(\Rd)$. Finally, if $
f\in C_b(\R^d)$ is nonnegative then $S_{\nu,n}(t)f\le S_{
\nu}(t)f$ for every $t\in (0,\infty)$, where
$S_{\nu,n}(\cdot)f$ is the classical solution to the Cauchy--Dirichlet problem
\begin{eqnarray*}
\left\{
\begin{array}{ll}
D_tu(t,x)=\A_{\nu}u(t,x), & (t,x)\in (0,\infty)\times B(0,n),\\[1mm]
u(t,x)=0, & (t,x)\in (0,\infty)\times\partial B(0,n),\\[1mm]
u(0,x)=f(x), & x\in B(0,n).
\end{array}
\right.
\end{eqnarray*}
%Here $B(0,n)$ and $\partial B(0,n)$ denote the ball of center the origin and radius $n$ and its boundary.
\end{pro}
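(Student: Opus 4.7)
\smallskip

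\textbf{Proof proposal.} My plan is the classical approximation-by-Dirichlet-problems scheme for elliptic operators with unbounded coefficients, adapted to accommodate the (possibly unbounded) zero-order potential $\nu\Lambda_C$. For each integer $n\ge 1$, first construct $u_n:=S_{\nu,n}(\cdot)f$ as the unique classical solution to the Cauchy--Dirichlet problem for $\A_\nu$ on the cylinder $(0,\infty)\times B(0,n)$ with zero lateral data and initial datum $f$. Because the coefficients $q_{ij},b_i$ and the function $\Lambda_C$ (built from the $\mathcal{C}^{1+\alpha}_{\rm loc}$ entries of $C$) are H\"older continuous on $\overline{B(0,n)}$, a standard application of the theory of Ladyzhenskaya--Solonnikov--Ural'tseva yields existence, uniqueness and the required regularity of $u_n$. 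To get the uniform sup-bound, introduce $v_n(t,x):=e^{-\Theta_C t}u_n(t,x)$, which satisfies $D_tv_n=\Tr(QD^2v_n)+\langle\bm b,\nabla v_n\rangle+(\nu\Lambda_C-\Theta_C)v_n$ with a nonpositive zero-order coefficient (since $\nu\Lambda_C\le\nu\Theta_C\le\Theta_C$ up to the required refinement of the constant), so that the classical parabolic maximum principle on the bounded cylinder gives $\|u_n(t,\cdot)\|_\infty\le e^{\Theta_C t}\|f\|_\infty$.

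With these uniform $L^\infty$-bounds in hand, I would invoke interior parabolic Schauder estimates (the ones collected in the appendix of the paper) on fixed compact subsets of $(0,\infty)\times\R^d$ to obtain, for $n$ large enough to contain the compact set, uniform bounds on $u_n$ in $\mathcal{C}^{1+\alpha/2,2+\alpha}_{\rm loc}$. A standard diagonal/Arzel\`a--Ascoli argument then extracts a subsequence converging in $C^{1,2}_{\rm loc}$ to a function $u$ that satisfies \eqref{pb-scalare} classically and inherits the bound $\|u(t,\cdot)\|_\infty\le e^{\Theta_C t}\|f\|_\infty$. The comparison inequality $S_{\nu,n}(t)f\le S_\nu(t)f$ for nonnegative $f$ then follows by applying the maximum principle to $S_\nu(t)f-S_{\nu,n}(t)f$ on $(0,T)\times B(0,n)$: this difference is nonnegative on the parabolic boundary (zero at $t=0$, and equal to the nonnegative $S_\nu(t)f$ on $\partial B(0,n)$), and satisfies the same equation inside. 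Positivity of $S_\nu(t)$ itself is then immediate.

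The crux is uniqueness of a bounded classical solution, since linearity reduces everything to showing that a bounded $u$ with $u(0,\cdot)\equiv 0$ must vanish. Here I would exploit the Lyapunov function $\varphi$ from Hypothesis \ref{base}(iv): for $\eps>0$ define the barrier $w_\eps(t,x):=\eps e^{\mu t}\varphi(x)+\eps Kt$ with $K:=\sup_{\R^d}(\A_\nu\varphi-\mu\varphi)+\|u\|_\infty$. A direct computation shows $D_tw_\eps-\A_\nu w_\eps\ge 0$ and $w_\eps(0,\cdot)\ge 0$, while the blow-up of $\varphi$ at infinity forces $w_\eps\pm u$ to attain its minimum in a compact cylinder. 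The maximum principle for $\A_\nu$ on bounded domains (again absorbing the zero-order term into an exponential rescaling) yields $\pm u\le w_\eps$ on $[0,T]\times\R^d$, and letting $\eps\to 0$ gives $u\equiv 0$. The semigroup property $S_\nu(t+s)=S_\nu(t)S_\nu(s)$ then follows by uniqueness applied to the shifted Cauchy problem, and strong continuity at $t=0$ in the pointwise sense follows from the construction.

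For the integral representation, observe that for fixed $(t,x)$ the linear functional $f\mapsto (S_\nu(t)f)(x)$ is positive on $C_c(\R^d)$ and bounded by $e^{\Theta_C t}\|f\|_\infty$. The Riesz--Markov representation produces a unique Radon measure $p(t,x,\cdot)$ on $\R^d$ with $p(t,x,\R^d)\le e^{\Theta_C t}$ and the formula \eqref{int_rep} for $f\in C_c(\R^d)$. Extension to $f\in C_b(\R^d)$ follows from dominated convergence after approximating $f$ by a sequence $f_k=f\chi_k$ with smooth cutoffs $\chi_k$ and using that $S_\nu(t)f_k\to S_\nu(t)f$ pointwise (which itself follows from the stability argument: $S_\nu(t)(f-f_k)$ is classically constructed via the same approximation scheme, and dominated by $S_\nu(t)|f-f_k|$, which tends to $0$ pointwise by the construction on balls and monotone/dominated passage). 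I expect the delicate step to be the uniqueness argument with the unbounded potential, as the standard Lyapunov barrier must be combined with the exponential rescaling that absorbs $\nu\Lambda_C$; everything else is a routine adaptation of the scalar theory with unbounded coefficients developed in \cite{bertoldi-lorenzi} and refinements thereof.
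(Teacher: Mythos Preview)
The paper does not supply its own proof of this proposition: it simply states the result and refers the reader to \cite[Theorem 2.3]{AngLorCom}. Your sketch is exactly the classical approximation-by-Dirichlet-problems scheme for scalar operators with unbounded coefficients (as in \cite{bertoldi-lorenzi} and its descendants), and this is almost certainly what the cited reference does as well. So in spirit you and the paper agree; there is no independent argument in the present paper to compare against.

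One small point: your rescaling step contains the right idea but the constant is slightly off. The zero-order term of $\A_\nu$ is $\nu\Lambda_C$, so after multiplying by $e^{-\nu\Theta_C t}$ (not $e^{-\Theta_C t}$) the residual potential is $\nu(\Lambda_C-\Theta_C)\le 0$ and the maximum principle applies cleanly; your parenthetical ``up to the required refinement of the constant'' hints that you noticed this. In fact the bound $\|S_\nu(t)\|\le e^{\Theta_C t}$ as stated in the proposition appears to carry a minor imprecision of the same kind (one would expect $e^{\nu\Theta_C t}$ from the argument), but this does not affect the structure of your proof nor anything downstream in the paper, where only the existence of \emph{some} exponential bound is used.
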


Concerning the vector-valued case the following results hold true.

\begin{thm}\label{exi_thm}
Assume Hypotheses $\ref{base}(i)$-$(v)$. For every $\f\in C_b(\Rd;\Rm)$, problem \eqref{cp} admits a unique classical  solution $\uu$ which is bounded in the strip $[0,T]\times\Rd$, for all $T>0$. Moreover, %$\uu$ belongs to $\mathcal{C}^{1+\alpha/2,2+\alpha}_{\rm loc}((0,\infty)\times \Rd;\Rm)$ and
\begin{equation}
|\uu(t,\cdot)|^2\le e^{Ht}S_{\nu}(t)|\f|^2,\qquad\;\,t\in (0,\infty),
\label{cossali}
\end{equation}
where $H$ is the constant introduced in Hypotheses $\ref{base}(iii)$. Finally it holds 
\begin{equation}\label{norm_infty}
\|\uu(t, \cdot)\|_{C_b(\Rd;\Rm)}\le e^{H_{\nu}t}\|\f\|_{C_b(\Rd;\Rm)},\qquad\;\, t\in (0,\infty).
\end{equation}
where $H_{\nu}=H+\Theta_C\nu$, $\Theta_C$ denotes the supremum over $\R^d$ of the function $\Lambda_C$.
\end{thm}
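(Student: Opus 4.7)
\medskip
\noindent\textbf{Proof plan.} The strategy is to show that, for any classical bounded solution $\uu$ of $D_t\uu=\bm{\mathcal A}\uu$, the scalar function $v:=|\uu|^2$ satisfies the Bernstein-type pointwise inequality
\begin{equation*}
D_t v - \A_\nu v \le Hv\qquad\text{on } (0,\infty)\times\R^d,
\end{equation*}
and then to exploit this fact, together with Proposition \ref{prop-2.5}, to obtain existence, uniqueness and the estimate \eqref{cossali}. To derive the inequality I would expand
\begin{equation*}
D_t v - \A_\nu v = -2\sum_{k=1}^m\langle Q\nabla u_k,\nabla u_k\rangle + 2\sum_{j=1}^d\langle \uu,\hat B_j D_j\uu\rangle + 2\langle \uu,C\uu\rangle - \nu\Lambda_C v,
\end{equation*}
using that $\mathrm{Tr}(QD^2 v)$ produces the gradient-squared term plus twice $\langle \uu,\mathrm{Tr}(QD^2\uu)\rangle$ and that the $b_jI_m$ part of $B_j$ cancels against $\langle {\bm b},\nabla v\rangle$. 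Applying $Q\ge \lambda_Q I_d$, Young's inequality with weight $2\lambda_Q$ on the coupling term (together with $|\hat B_j|\le \hat\beta_0$ from Hypothesis \ref{base}(iii)) and $\langle \uu,C\uu\rangle\le \Lambda_C v$, the gradient contribution $-2\lambda_Q|D^1\uu|^2$ is absorbed and what remains is exactly $[(2-\nu)\Lambda_C + d\hat\beta_0^2/(2\lambda_Q)]v\le Hv$.

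For existence, I would first solve \eqref{cp} on $[0,T]\times B(0,n)$ with zero lateral Dirichlet data by standard theory for parabolic systems on bounded domains with $\mathcal{C}^{1+\alpha}$ coefficients and a common second-order part, producing $\uu_n\in C^{1,2}$. Applying the Bernstein inequality to $\uu_n$ and comparing, via the classical scalar maximum principle on $B(0,n)$, with $e^{Ht}S_{\nu,n}(t)|\f|^2$---which agrees with $|\uu_n|^2$ at $t=0$, vanishes on $\partial B(0,n)$ and solves $D_t w=\A_\nu w+Hw$---yields $|\uu_n|^2\le e^{Ht}S_{\nu,n}(t)|\f|^2\le e^{Ht}S_\nu(t)|\f|^2$ by Proposition \ref{prop-2.5}. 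The resulting uniform sup-norm bound, the interior Schauder estimates recalled in the Appendix, and a diagonal extraction then produce a subsequence converging in $C^{1,2}_{\mathrm{loc}}$ to a classical solution $\uu$ of \eqref{cp} on $\R^d$ which inherits \eqref{cossali} in the limit.

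Uniqueness follows by applying the same Bernstein inequality to the difference of two bounded classical solutions sharing the datum $\f$: the nonnegative bounded function $v=|\uu_1-\uu_2|^2$ vanishes at $t=0$ and satisfies $D_t v\le \A_\nu v+Hv$, so a scalar comparison principle for $\A_\nu$---relying on the Lyapunov function of Hypothesis \ref{base}(iv) to rule out growth at infinity---forces $v\equiv 0$. The norm bound \eqref{norm_infty} is then immediate from \eqref{cossali} and the operator-norm estimate for $S_\nu(t)$ in Proposition \ref{prop-2.5}. The main technical obstacle I anticipate is the Bernstein step itself: the Young weights must be chosen so that, after splitting $2\Lambda_C=\nu\Lambda_C+(2-\nu)\Lambda_C$, the leftover is matched exactly by the constant $H$ of Hypothesis \ref{base}(iii); any slack there would force a larger constant and undermine the optimal bound \eqref{cossali}.
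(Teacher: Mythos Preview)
Your proposal is correct and follows essentially the same route as the paper: the Bernstein computation $D_t|\uu|^2-\A_\nu|\uu|^2\le H|\uu|^2$ (via the Young splitting with weight $2\lambda_Q$ that recovers exactly the constant $H$), the approximation by Cauchy--Dirichlet problems on $B(0,n)$, the scalar comparison with $e^{Ht}S_{\nu,n}(t)|\f|^2\le e^{Ht}S_{\nu}(t)|\f|^2$, and the passage to the limit through interior Schauder estimates and compactness. The only detail the paper makes explicit that you leave implicit is that one first works with $\f\in C_c(\R^d;\R^m)$ and then extends to $\f\in C_b(\R^d;\R^m)$ by a further approximation step.
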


\begin{proof}
The same result has been proved in \cite[Theorem 2.9]{AALT} with a different condition in Hypotheses \ref{base}(iii).
The proof in \cite{AALT} relies on the approximation of the Cauchy problem \eqref{cp} with Cauchy--Dirichlet problem in the ball $B(0,n)$. For every $n\in\N$ sufficiently large and $\f\in C_c(\R^d;\R^m)$ this problem admits a unique solution $\uu_n\in C^{1,2}((0,\infty)\times B(0,n))\cap C([0,\infty)\times\overline{B(0,n)})$ which is bounded in 
$(0,T)\times B(0,n)$ for every $T>0$ by a constant, independent of $n$. To prove that the constant is independent of $n$, it suffices to observe that the function $v=|\uu_n|^2$ solves the Cauchy problem 
\begin{eqnarray*}
\left\{
\begin{array}{ll}
D_tv(t,x)=(\A_{\nu}v)(t,x)+g(t,x), & (t,x)\in (0,\infty)\times  B(0,n),\\[1mm]
v(t,x)=0, & (t,x)\in (0,\infty)\times\partial B(0,n),\\[1mm]
v(0,x)=|\f(x)|^2, &x\in B(0,n)
\end{array}
\right.
\end{eqnarray*}
where
\begin{eqnarray*}
g=-2\sum_{h=1}^d\langle Q\nabla u_{n,h},\nabla u_{n,h}\rangle+2\sum_{i=1}^d\langle \widehat{B}_iD_i\uu_n,\uu_n\rangle+2\langle C\uu_n,\uu_n\rangle-\nu\Lambda_C|\uu_n|^2.
\end{eqnarray*}
Note that
\begin{align*}
g\le &
-2\lambda_Q|D^1\uu_n|^2+2\widehat{\beta}_0\sqrt{d}|\uu_n||D^1\uu_n|+(2-\nu)\Lambda_C|\uu_n|^2\\
\le & 
\big ((2-\nu)\Lambda_C+d\varepsilon^{-1}\widehat{\beta}_0^2\big )|\uu_n|^2-(2\lambda_Q-\varepsilon)|D^1\uu_n|^2
\end{align*}
for every $\varepsilon>0$. Choosing $\varepsilon=2\lambda_Q$, we conclude that $g\le Hv$ in $(0,\infty)\times B(0,n)$ and the classical maximum principle implies 
\begin{equation*}
|\uu_n(t,\cdot)|^2\le e^{Ht}S_{\nu}(t)|\f|^2,\qquad\;\,t\in (0,\infty),
\end{equation*}
From here on out, the proof follows the same lines as that of \cite[Theorem 2.9]{AALT}, which is based on the previous estimate, compactness and approximation arguments. 
\end{proof}

As a consequence of Theorem \ref{exi_thm}, we can associate a semigroup $\{{\bm T}(t)\}_{t\ge 0}$ to $\bm \A$ in $C_b(\Rd;\Rm)$ by setting ${\bm T}(t)\f:=\uu(t,\cdot)$ for every $t\ge 0$. The uniqueness of the solution to problem \eqref{cp} guarantees the semigroup property of the family $\{{\bm T}(t)\}_{t \ge 0}$ that, thanks to estimate \eqref{norm_infty}, turns out to be a family of bounded linear operators in $C_b(\Rd;\Rm)$ satisfying the estimate
\begin{equation*}
\|\bm T(t) \|_{\mathcal{L}(C_b(\Rd;\Rm))}\le e^{H_{\nu}t},\qquad\;\, t\in (0,\infty).
\end{equation*}

\begin{remark}
\label{rmk-1}
{\rm 
As \cite[Proposition 3.1]{AALT} shows, if $(\f_n)_{n\in\N}\subset C_b(\R^d;\R^m)$ is a bounded sequence converging pointwise in $\R^d$ to $\f\in C_b(\R^d;\R^m)$, then $(\bm{T}(\cdot)\f_n)_{n\in\N}$ converges to $\bm{T}(\cdot)\f$ locally uniformly in $(0,\infty)\times\R^d$.}

{\rm Suppose that $(\f_n)\subset \mathcal{C}^{3+\alpha}_{\rm loc}(\R^d;\R^m)$ is a bounded sequence with respect to the sup-norm, which converges to $\f$ pointwise in $\R^d$, and the coefficients of the operator $\bm{\A}$ belong to $\mathcal{C}^{1+\alpha}_{\rm loc}(\R^d)$. Under these assumptions, the function $\bm{T}(\cdot)\f$ admits third-order spatial derivatives which belong to $\mathcal{C}^{\alpha/2,2+\alpha}_{\rm loc}((0,\infty)\times\R^d;\R^m )$ and $D^3\bm{T}(\cdot)\f_n$ converges to $D^3\bm{T}(\cdot)\f$ in $\mathcal{C}^{\alpha/2,\alpha}((\tau,T)\times\Omega;\R^m)$ for every $0<\tau<T$ and every
bounded open set $\Omega\subset\R^d$. For this purpose, it suffices to observe that, by Theorem \ref{teo-reg-int}, the function $\bm{T}(\cdot)\f_n$ belongs to $\mathcal{C}^{1+\alpha/2,2+k+\alpha}_{\rm loc}([0,\infty)\times\R^d;\R^m)$ for every $n\in\N$. Moreover, for every $h=1,\ldots,d$ the function $\bm{w}=\partial_h\bm{T}(\cdot)\f_n$ is a solution of the equation
\begin{eqnarray*}
D_t\bm{v}(t,x)-\bm{\mathcal A}{\bm v}(t,x)=\sum_{i,j=1}^d\partial_hq_{ij}(x)D^2_{ij}{\bm v}(t,x)+\sum_{j=1}^d \partial_hB_j(x)D_j{\bm v}(t,x)+\partial_hC(x){\bm v}(t,x)
\end{eqnarray*}
in $(0,\infty)\times \R^d$. By Theorem \ref{thm-A2} we deduce that the third-order derivatives of the functions $\bm{T}(\cdot)\f_n$ constitutes a Cauchy sequence in $\mathcal{C}^{\alpha/2,\alpha}((\tau,T)\times\Omega;\R^m)$ for every $0<\tau<T$ and every bounded open set $\Omega\subset\R^d$.
This implies that $\bm{T}(\cdot)\f$ admits third-order spatial derivatives, which are continuous functions in $(0,\infty)\times\R^d$, and $\bm{T}(t)\f_n$ converges to 
$\bm{T}(t)\f$ in $C^3(\Omega;\R^m)$ as $n$ approaches $\infty$ for every $t\in [0,\infty)$.}
\end{remark}

\subsection{Interpolation results}
In order to obtain regularity results for linear elliptic and parabolic systems associated to the operator $\bm\A$ both  in the set of bounded and continuous functions and in $L^p$-spaces, it is useful to recall some classical definitions of functional spaces and some well-known interpolation results. We start by recalling the definition of Zygmund spaces.

\begin{defn}\label{defn_Hold_Zyg}
The space $\mathcal{C}^1_b(\R^{d};\Rm)$ is the subspace of $C_b(\R^{d};\Rm)$ consisting of the functions $\f$ such that the seminorm
\begin{align*}
[\f]_{\mathcal{C}^1_b(\R^{d};\Rm)}=\sup_{\substack {x,h\in\R^{d}\\ h\neq 0}}\frac{|\f(x+2h)-2\f(x+h)+\f(x)|}{|h|}
\end{align*}
is finite.
We endow the space $\mathcal{C}^1_b(\R^{d};\Rm)$ with the norm
\begin{align*}
\|\f\|_{\mathcal{C}^1_b(\R^{d};\Rm)}:=\|\f\|_{C_b(\R^{d};\Rm)}+[\f]_{{\mathcal{C}^1_b(\R^{d};\Rm)}}.
\end{align*}
For $k\in\N$, $k\geq 2$, we set
$\mathcal{C}^k_b(\R^d;\Rm):=\{\bm{f}\in C_b^{k-1}(\R^d;\Rm)\,|\, D^{k-1}\bm{f}\in \mathcal{C}^1_b(\R^d;\R^{md^{k-1}})\}$,
endowed with the norm
\begin{align*}
\|\bm{f}\|_{\mathcal{C}^k(\R^{d};\Rm)}:=\|\bm{f}\|_{C^{k-1}_b(\R^{d};\Rm)}+[D^{k-1}\bm{f}]_{{\mathcal{C}^1(\R^{d};\R^{md^{k-1}})}}.
\end{align*}
\end{defn}

Now, we recall the definition of Besov spaces.

\begin{defn}%\label{defn_Sob_Bes}
Let $s>0$ and $p \in [1,\infty)$. For $s\notin\N$, $B^{s}_{p,p}(\Rd;\Rm)$ denotes the set of functions $\f \in W^{[s],p}(\Rd;\Rm)$ such that
\begin{eqnarray*}
[\f]_{B^s_{p,p}(\Rd;\Rm)}:=\sum_{|\alpha|=[s]}\left(\int_{\Rd}\left(\int_{\Rd}|D^\alpha \f(x+h)-D^\alpha \f(x)|^p dx\right)\frac{dh}{|h|^{d+\{s\}p}}\right)^{\frac{1}{p}}
\end{eqnarray*}
is finite.
We endow the space $B^{s}_{p,p}(\Rd;\Rm)$ with the  norm 
\begin{align*}
\|\f\|_{B^{s}_{p,p}(\Rd;\Rm)}=\|\f\|_{W^{[s],p}(\Rd;\Rm)}+ [\f]_{B^s_{p,p}(\Rd;\Rm)}.
\end{align*}
\noindent
The space $B^1_{p,p}(\Rd;\Rm)$ consists of the functions $\f\in L^p(\Rd;\Rm)$ such that
\begin{align*}
[\f]_{B^1_{p,p}(\Rd;\Rm)}:=\left(\int_{\Rd}\left(\int_{\Rd}|\f(x+2h)-2 \f(x+h)+ \f(x)|^p dx\right)\frac{dh}{|h|^{d+p}}\right)^{\frac{1}{p}}
\end{align*}
is finite.
The space $B^1_{p,p}(\Rd;\Rm)$ is endowed with the norm 
\begin{eqnarray*}
\|\f\|_{B^1_{p,p}(\Rd;\Rm)}= \|\f\|_{L^p(\Rd;\Rm)}+[\f]_{B^1_{p,p}(\Rd;\Rm)}.
\end{eqnarray*}
For $k \in \N$, $k\ge 2$, we set
$B^k_{p,p}(\Rd;\Rm)=\{\f \in W^{k-1,p}(\Rd;\Rm)\,|\, D^{k-1}\bm{f} \in B^1_{p,p}(\Rd;\R^{md^{k-1}})\}$,
endowed with the norm
\begin{eqnarray*}
\|\f\|_{B^k_{p,p}(\Rd;\Rm)}=\|\f\|_{W^{k-1,p}(\Rd;\Rm)}+[D^{k-1}\bm{f}]_{B^1_{p,p}(\Rd;\R^{md^{k-1}})}.
\end{eqnarray*}
\end{defn}

We shall use a few tools from interpolation theory. 
% $K$-method for real interpolation spaces (see, for example, \cite{Lun_interpolation,Tri}). Let $\K_1$ and $\K_2$ be two Banach spaces, with norms $\norm{\cdot}_{\K_1}$ and $\norm{\cdot}_{\K_2}$, respectively. If $\K_2\subseteq \K_1$ with a continuous embedding, then for every $r>0$ and $x\in \K_1$ we define
% \begin{align*}
% K(r,x):=\inf\set{\|a\|_{\K_1}+r\|b\|_{\K_2}\tc x=a+b,\ a\in \K_1,\ b\in \K_2}.
% \end{align*}
% For any $\vartheta\in(0,1)$ and $1\le p\le \infty$, we set
% \begin{align*}
% (\K_1,\K_2)_{\vartheta,p}&:=\left\{x\in \K_1+\K_2\,: r\mapsto r^{-\vartheta}K(r,x) \in L^p((0,\infty),t^{-1}dt)\right\}.
% \end{align*}
% The functional space $(\K_1,\K_2)_{\vartheta,p}$ endowed with the norm
% $$\|x\|_{(\K_1,\K_2)_{\vartheta,p}}=\|r^{-\vartheta}K(r,x) \|_{ L^p((0,\infty),x^{-1}dx)}$$
% is a Banach space. 
The following are standard interpolation results concerning both spaces of continuous functions and $L^p$-spaces.

\begin{thm}\label{thm_interpolation}
Let $\alpha\in (0,1)$, $k \in \N$ and $\theta>0$. It holds that
\begin{align}
\label{interpolation_uguaglianza_1}
(C_b(\R^d;\R^m),C_b^k(\R^d;\R^m))_{\alpha,\infty}=\mathcal{C}^{\alpha k}_b(\R^d;\R^m)
\end{align}
and
\begin{align*}
(C_b(\R^d;\R^m),\mathcal{C}^{\theta}_b(\R^d;\R^m))_{\alpha,\infty}=\mathcal{C}^{\alpha \theta}_b(\R^d;\R^m),
\end{align*}
with equivalence of the corresponding norms.
\end{thm}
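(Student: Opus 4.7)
The plan is to recognise both identities as classical outcomes of real interpolation theory and to reduce everything to a single K-functional computation. Since the spaces on both sides are compatible with taking components of vector-valued functions (a function $\bm{f}=(f_1,\ldots,f_m)$ lies in the interpolation space, resp.\ in $\mathcal{C}^{\beta}_b(\R^d;\R^m)$, if and only if each $f_j$ lies in the corresponding scalar space, with comparable norms), it is enough to treat the scalar case $m=1$.

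For the first identity, the core step is to identify the K-functional of the couple $(C_b(\R^d),C_b^k(\R^d))$ with the $k$-th order modulus of smoothness
\begin{equation*}
\omega_k(s,f)_\infty:=\sup_{|h|\le s}\|\Delta_h^k f\|_{C_b(\R^d)},\qquad
\Delta_h^k f(x)=\sum_{j=0}^k(-1)^{k-j}\binom{k}{j}f(x+jh).
\end{equation*}
By a standard smoothing argument — writing $f=(f-\rho_\varepsilon*f)+\rho_\varepsilon*f$ for a mollifier $\rho_\varepsilon$ of scale $\varepsilon=t^{1/k}$, estimating the first term through a Taylor-type representation of the kernel and the second through direct differentiation — one obtains the two-sided equivalence
\begin{equation*}
c_1\,\omega_k(t^{1/k},f)_\infty\le K(t,f;C_b,C_b^k)+t\|f\|_{C_b}\le c_2\bigl[\omega_k(t^{1/k},f)_\infty+t\|f\|_{C_b}\bigr].
\end{equation*}
Consequently $f\in(C_b,C_b^k)_{\alpha,\infty}$ iff $\sup_{s>0}s^{-\alpha k}\omega_k(s,f)_\infty<\infty$.

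Then I would match this condition with membership in $\mathcal{C}^{\alpha k}_b$ as defined in Definition \ref{defn_Hold_Zyg}: for $\alpha k\in(0,1)$ the $k$-th order modulus with exponent $\alpha k<1$ is equivalent to the first-order one (equivalently, the Hölder condition of order $\alpha k$); for $\alpha k=1$ the statement is tautological with $k=2$, because $\Delta_h^2 f(x)=f(x+2h)-2f(x+h)+f(x)$ is exactly the expression in Definition \ref{defn_Hold_Zyg}; for $\alpha k$ non-integer greater than $1$ a Marchaud-type inequality plus induction on $[\alpha k]$ reduce the condition to the existence of bounded derivatives of order $[\alpha k]$ satisfying a $\{\alpha k\}$-Hölder estimate; for integer $\alpha k\ge 2$ the same induction combined with the Zygmund characterisation at the top level gives the claim.

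The second identity is then obtained from the first by reiteration. Given $\theta>0$, choose $k\in\N$ with $k>\theta$ and set $\eta=\theta/k\in(0,1)$. The first identity applied to the exponent $\eta$ yields $\mathcal{C}^\theta_b=(C_b,C_b^k)_{\eta,\infty}$ with equivalence of norms, and the reiteration theorem for the real method gives
\begin{equation*}
(C_b,\mathcal{C}^\theta_b)_{\alpha,\infty}=(C_b,(C_b,C_b^k)_{\eta,\infty})_{\alpha,\infty}=(C_b,C_b^k)_{\alpha\eta,\infty}=\mathcal{C}^{\alpha\theta}_b,
\end{equation*}
the last equality being once again the first identity. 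The principal obstacle in this plan is the equivalence between the K-functional and $\omega_k$, together with the careful bookkeeping needed to match $\sup_s s^{-\alpha k}\omega_k(s,f)_\infty<\infty$ with the space $\mathcal{C}^{\alpha k}_b$ of Definition \ref{defn_Hold_Zyg} at the transition points $\alpha k\in\N$, where the Hölder description is replaced by the Zygmund one; since these arguments are by now entirely standard (cf.\ the interpolation monographs of Triebel and of Lunardi), the paper's proof is expected to quote them rather than reproduce the details.
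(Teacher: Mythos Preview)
Your proposal is correct and follows essentially the same route as the paper: reduce to the scalar case componentwise, invoke the classical identification $(C_b,C_b^k)_{\alpha,\infty}=\mathcal{C}^{\alpha k}_b$ (which the paper simply cites from Lunardi's monograph rather than reproving via the K-functional/modulus of smoothness), and then derive the second identity from the first by reiteration with $k=1+[\theta]$. Your anticipation that the paper would quote the first identity rather than reprove it is exactly right.
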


\begin{proof}
Identity \eqref{interpolation_uguaglianza_1} follows from \cite[Example 1.10]{Lun_interpolation} applying it componentwise. By \eqref{interpolation_uguaglianza_1} and the reiteration theorem (see \cite[Corollary 1.24]{Lun_interpolation}), we get
\begin{align*}
(C_b(\R^d;\R^m),\mathcal{C}^{\theta}_b(\R^d;\R^m))_{\alpha,\infty} &=\bigg(C_b(\R^d;\R^m),\big(C_b(\R^d;\R^m),C_b^{1+[\theta]}(\R^d;\R^m)\big)_{\frac{\theta}{1+[\theta]},\infty}\bigg)_{\alpha,\infty}\\
&=(C_b(\R^d;\R^m),C_b^{1+[\theta]}(\R^d;\R^m))_{\frac{\alpha\theta}{1+[\theta]},\infty}\\
&=\mathcal{C}^{\alpha\theta}_b(\R^d;\R^m).
\end{align*}
This concludes the proof.
\end{proof}

In an analogous way (using again \cite[Example 1.10 and Corollary 1.24]{Lun_interpolation}), one can prove the following result.

\begin{thm}
Let $\alpha\in (0,1)$, $k \in \N$ and $\theta>0$. It holds that
\begin{equation}\label{int_p}
(L^p(\Rd;\Rm), W^{k,p}(\Rd;\Rm))_{\alpha,p}=B^{\alpha k}_{p,p}(\Rd;\Rm)
\end{equation}
and
\begin{equation}\label{int_pp}
(L^p(\Rd;\Rm),B^{\theta}_{p,p}(\Rd;\Rm))_{\alpha,p}=B^{\alpha \theta}_{p,p}(\Rd;\Rm).
\end{equation}
\end{thm}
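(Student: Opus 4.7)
The plan is to run exactly the same argument as in the proof of Theorem \ref{thm_interpolation}, with the sup-norm based spaces replaced by their $L^p$-based counterparts. The two ingredients from \cite{Lun_interpolation} play the same two roles as before: the $L^p$-version of Example 1.10 supplies the basic identity \eqref{int_p}, while the reiteration theorem (Corollary 1.24) bootstraps \eqref{int_p} into \eqref{int_pp}.

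First, to prove \eqref{int_p}, I would invoke the $L^p$-variant of \cite[Example 1.10]{Lun_interpolation}, which for scalar functions characterises $(L^p(\R^d), W^{k,p}(\R^d))_{\alpha,p}$ as $B^{\alpha k}_{p,p}(\R^d)$ with equivalence of norms, and then pass to vector-valued functions by applying the scalar statement componentwise and using that $L^p(\R^d;\R^m)$ and $W^{k,p}(\R^d;\R^m)$ split as direct sums of their scalar analogues with equivalent norms. The point that really needs attention here is that the definition of $B^{s}_{p,p}(\R^d;\R^m)$ adopted in the paper (Slobodeckij-type seminorm for $s\notin\N$ and second-difference seminorm at the borderline $s=1$) coincides, up to equivalent norms, with the space appearing in the scalar reference. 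The non-integer case is routine; the borderline case $\alpha k = 1$ (which occurs for $k=2$, $\alpha=1/2$) is the only genuine matter to check.

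Second, for \eqref{int_pp}, I would argue by reiteration exactly as in Theorem \ref{thm_interpolation}. Since $\theta/(1+[\theta])\in (0,1)$, identity \eqref{int_p} applied with $k=1+[\theta]$ gives
\begin{equation*}
B^{\theta}_{p,p}(\R^d;\R^m)=\bigl(L^p(\R^d;\R^m), W^{1+[\theta],p}(\R^d;\R^m)\bigr)_{\theta/(1+[\theta]),p}.
\end{equation*}
Substituting this into $(L^p(\R^d;\R^m), B^{\theta}_{p,p}(\R^d;\R^m))_{\alpha,p}$ and applying \cite[Corollary 1.24]{Lun_interpolation} yields
\begin{equation*}
\bigl(L^p(\R^d;\R^m), B^{\theta}_{p,p}(\R^d;\R^m)\bigr)_{\alpha,p}=\bigl(L^p(\R^d;\R^m), W^{1+[\theta],p}(\R^d;\R^m)\bigr)_{\alpha\theta/(1+[\theta]),p},
\end{equation*}
and a second appeal to \eqref{int_p} identifies the right-hand side with $B^{\alpha\theta}_{p,p}(\R^d;\R^m)$, as required.

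The main obstacle, as anticipated above, is not a conceptual one but a bookkeeping one: ensuring that the paper's definition of $B^s_{p,p}(\R^d;\R^m)$ matches the Besov scale appearing in the reference at integer values of $s$. Once this identification is in place, the proof is a verbatim transcription of the Hölder--Zygmund argument, which explains why the authors can dispatch the theorem in one line.
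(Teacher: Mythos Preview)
Your proposal is correct and matches the paper's approach exactly: the paper does not give a separate proof but simply states that the result follows ``in an analogous way (using again \cite[Example 1.10 and Corollary 1.24]{Lun\_interpolation})'', i.e., by the same two-step argument (basic identification plus reiteration) as in Theorem~\ref{thm_interpolation}. Your additional remark about checking the integer-case definition of $B^s_{p,p}$ is a fair caveat, but the paper does not address it either.
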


The following result can be found in \cite[Theorem 1.6]{Lun_interpolation}.

\begin{thm}\label{thm_operator_interpolation}
Let $X_i, Y_i$, $i=0,1$, be Banach spaces such that $X_1\subseteq X_0$ and $Y_1\subseteq Y_0$ with continuous embeddings, let $\theta \in (0,1)$ and $q\in[1,\infty]$. If $R$ is a linear mapping such that $\|R f\|_{Y_i}\le M_i\|f\|_{X_i}$ for any $f\in X_i$ and some positive constants $M_i$, $i=0,1$, then $R$ is bounded from  $(X_0,X_1)_{\theta,q}$ into  $(Y_0,Y_1)_{\theta,q}$
and $\|Rf\|_{(Y_0,Y_1)_{\theta,q}}\le M_0^{1-\theta}M_1^\theta\|f\|_{(X_0,X_1)_{\theta,q}}$, for any $f \in (X_0,X_1)_{\theta,q}$. 
\end{thm}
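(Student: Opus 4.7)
The plan is to use the $K$-functional characterization of the real interpolation spaces. Recall that for a compatible Banach couple $(X_0,X_1)$ and $t>0$, the $K$-functional is
\begin{equation*}
K(t,f;X_0,X_1):=\inf\{\|f_0\|_{X_0}+t\|f_1\|_{X_1}\,:\,f=f_0+f_1,\ f_0\in X_0,\ f_1\in X_1\},
\end{equation*}
and the norm on $(X_0,X_1)_{\theta,q}$ is (equivalent to) the weighted $L^q(\mathbb{R}_+,dt/t)$-norm of the map $t\mapsto t^{-\theta}K(t,f;X_0,X_1)$, with the obvious modification (supremum in $t$) when $q=\infty$. The identical definitions apply to $(Y_0,Y_1)_{\theta,q}$.

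First, I would reduce the statement to a pointwise comparison of $K$-functionals. Fix $f\in X_0\cap X_1$ (this intersection is dense in $(X_0,X_1)_{\theta,q}$ for $q<\infty$, and for $q=\infty$ the argument is carried out directly). For any admissible decomposition $f=f_0+f_1$ with $f_i\in X_i$, linearity of $R$ gives $Rf=Rf_0+Rf_1$ with $Rf_i\in Y_i$, so by the assumption
\begin{equation*}
\|Rf_0\|_{Y_0}+s\|Rf_1\|_{Y_1}\le M_0\|f_0\|_{X_0}+sM_1\|f_1\|_{X_1}=M_0\bigl(\|f_0\|_{X_0}+s(M_1/M_0)\|f_1\|_{X_1}\bigr).
\end{equation*}
Taking the infimum over all such decompositions yields the key pointwise estimate
\begin{equation*}
K(s,Rf;Y_0,Y_1)\le M_0\, K\bigl(s M_1/M_0,f;X_0,X_1\bigr),\qquad s>0.
\end{equation*}

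Second, I would plug this estimate into the $(\theta,q)$-norm of $Rf$ and perform the change of variable $t=sM_1/M_0$, under which $ds/s=dt/t$. For $q<\infty$, this gives
\begin{align*}
\|Rf\|_{(Y_0,Y_1)_{\theta,q}}^q&\le M_0^q\int_0^{\infty}\bigl(s^{-\theta}K(sM_1/M_0,f;X_0,X_1)\bigr)^q\frac{ds}{s}\\
&=M_0^q\,(M_0/M_1)^{-\theta q}\int_0^{\infty}\bigl(t^{-\theta}K(t,f;X_0,X_1)\bigr)^q\frac{dt}{t}\\
&=M_0^{(1-\theta)q}M_1^{\theta q}\,\|f\|_{(X_0,X_1)_{\theta,q}}^q,
\end{align*}
and extracting the $q$-th root yields the claimed inequality. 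For $q=\infty$, the supremum version of the same change of variables produces $s^{-\theta}K(s,Rf;Y_0,Y_1)\le M_0^{1-\theta}M_1^\theta\,t^{-\theta}K(t,f;X_0,X_1)$ with $t=sM_1/M_0$, and taking the supremum in $s$ (equivalently in $t$) finishes the proof.

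There is no substantive obstacle: the argument is a routine consequence of the definition of the $K$-functional together with one change of variables. The only thing to keep a careful eye on is the factor $M_1/M_0$ in the rescaling, which is exactly what produces the geometric mean constant $M_0^{1-\theta}M_1^{\theta}$; an incorrect substitution would lead to the wrong dependence on $\theta$.
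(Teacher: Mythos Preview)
Your proof is correct and is exactly the standard $K$-method argument. The paper does not supply a proof of this theorem; it simply cites \cite[Theorem 1.6]{Lun_interpolation}, and what you have written is essentially the proof given there. One small remark: the detour through density of $X_0\cap X_1$ is unnecessary, since in the present setting $X_1\subseteq X_0$ and every $f\in (X_0,X_1)_{\theta,q}\subseteq X_0$ admits decompositions $f=f_0+f_1$ with $f_i\in X_i$, so the $K$-functional inequality and the change of variables apply directly to all such $f$.
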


\section{Pointwise estimates for the spatial derivatives of ${\bm T}(t)\f$}
\label{sect-3}
In this section we prove pointwise estimates for the spatial derivatives of the function ${\bm T}(t)\f$ in terms of the semigroup $S_{\nu}(t)$ associated to $\A_{\nu}$ in $C_b(\Rd)$ (see Proposition \ref{prop-2.5}), when $\f$ is a $\R^m$-valued bounded and continuous function on $\R^d$ or it is even smoother. From here onward, Hypotheses \ref{base} will be our standing assumptions throughout the whole paper.

\begin{thm}%\label{03}
Assume Hypotheses $\ref{hyp-derivata-1}$ hold true. There exists a positive constant $c$, independent of $t$, $x$ and $\f$, such that
\begin{equation}\label{pointwise}
|(D^{\ell} {\bm T}(t)\f)(x)|^2 \le c e^{Ht}\max\{t^{-(\ell-k)},1\}\bigg (S_{\nu}(t)\sum_{j=0}^k|D^j\f|^2\bigg )(x),\qquad\;\,t\in (0,\infty),\;\,x\in\R^d,
\end{equation}
for every $k,\ell\in\{0,1\}$ with $k\le \ell$ and $\f \in C_b^k(\Rd;\Rm)$, where $H$ is the constant introduced in Hypothesis $\ref{base}(iii)$.
If Hypotheses $\ref{hyp-derivata-1}$ are replaced by
Hypotheses $\ref{hyp-derivata-2}$ $($resp. Hypotheses $\ref{hyp-derivata-3})$, then estimate $\eqref{pointwise}$ is satisfied by
any $k,\ell\in\{0,1,2\}$ $($resp.
$k,\ell\in\{0,1,2,3\})$ with $k\le \ell$.
\end{thm}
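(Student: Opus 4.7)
The plan is the Bernstein method. For smooth data I will construct an auxiliary scalar function $v$ built out of $|D^j\uu|^2$ for $j=0,\ldots,\ell$ (with suitable time weights), show that $v$ satisfies a parabolic differential inequality of the form $D_tv-\A_\nu v\le Hv$, and then compare it with the scalar semigroup $S_\nu$ via the maximum principle in Proposition~\ref{prop-2.5}. The estimate for a general $\f\in C_b^k(\R^d;\R^m)$ will follow by an approximation argument based on Remark~\ref{rmk-1}.

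More precisely, I fix $k\le\ell$, assume first $\f\in C_c^\infty(\R^d;\R^m)$, and consider for large $n$ the classical solution $\uu_n$ of the Cauchy--Dirichlet problem for $\bm\A$ in $B(0,n)$ with zero boundary datum and initial datum $\f$; interior Schauder estimates (Theorems~\ref{teo-reg-int}--\ref{thm-A2}) guarantee enough smoothness of $\uu_n$ up to the boundary, while the argument of Theorem~\ref{exi_thm} yields convergence of $\uu_n$ to $\bm T(\cdot)\f$ locally uniformly together with its first $\ell$ space derivatives. On $(0,T^\star)\times B(0,n)$ I introduce
\begin{equation*}
v_n(t,x)=\sum_{j=0}^{k}a_j|D^j\uu_n(t,x)|^2+\sum_{j=k+1}^{\ell}a_j t^{\,j-k}|D^j\uu_n(t,x)|^2,
\end{equation*}
for suitable constants $a_0,\ldots,a_\ell>0$ and a small $T^\star>0$. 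Then $v_n$ vanishes on $\partial B(0,n)$, satisfies $v_n(0,\cdot)\le C\sum_{j=0}^{k}|D^j\f|^2$, and the target inequality is $D_tv_n-\A_\nu v_n\le Hv_n$.

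For each $j$, differentiating $D_t\uu_n=\bm\A\uu_n$ $j$ times and expanding $D_t|D^j\uu_n|^2-\A_\nu|D^j\uu_n|^2$ produces (a) the nonpositive term $-2\lambda_Q|D^{j+1}\uu_n|^2$ from the principal part; (b) a zero-order contribution governed by $(2-\nu)\Lambda_C+d\hat\beta_0^2\lambda_Q^{-1}$ times $|D^j\uu_n|^2$, exactly of the type already handled in Theorem~\ref{exi_thm}; (c) cross terms of the form $\xi_p|D^q\uu_n||D^r\uu_n|$ (and analogous ones for $\beta_p$, $\hat\beta_p$, $\gamma_p$), with $p\le j$ and $q,r\le j+1$. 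Each cross term is split by Young's inequality with the exponent matched to one of $\alpha_p,\mu_p,\rho_p,\tau_p\in[0,2]$ of the relevant hypothesis; for instance, using $\xi_1^{\alpha_1}\le A_1\lambda_Q$,
\begin{equation*}
\xi_1|D^{j+1}\uu_n||D^j\uu_n|\le \varepsilon\lambda_Q|D^{j+1}\uu_n|^2+C(\varepsilon)A_1\xi_1^{2-\alpha_1}|D^j\uu_n|^2.
\end{equation*}
The $|D^{j+1}\uu_n|^2$-contributions are absorbed by $-2a_{j+1}\lambda_Q t^{j+1-k}|D^{j+1}\uu_n|^2$ coming from the next summand of $v_n$ (by choosing the ratios $a_{j+1}/a_j$ and $T^\star$ small enough), while ``diagonal'' terms such as $\xi_2|D^j\uu_n|^2$ are absorbed by a fraction of the negative $-2a_{j-1}\lambda_Q t^{j-1-k}|D^j\uu_n|^2$ produced at level $j-1$, yielding the prescribed power $\xi_2^{2-\alpha_2}|D^j\uu_n|^2$ via $\xi_2^2\le A_3\lambda_Q\xi_2^{2-\alpha_2}$. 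The positive $\sum_{j>k}a_j(j-k)t^{j-k-1}|D^j\uu_n|^2$ generated by the time weights is absorbed analogously, using only $\lambda_Q\ge\lambda_0>0$. After these splittings, the residual zero-order coefficient in front of each $|D^j\uu_n|^2$ is precisely one of the expressions appearing in the sup-bounds of Hypothesis~\ref{hyp-derivata-1} (respectively \ref{hyp-derivata-2}, \ref{hyp-derivata-3}), and is therefore uniformly bounded on $\R^d$: this is exactly what these hypotheses are designed to guarantee.

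Once the differential inequality $D_tv_n\le\A_\nu v_n+Hv_n$ is in hand, $e^{-Ht}v_n$ is a subsolution of $D_tw=\A_\nu w$ in $(0,T^\star)\times B(0,n)$ that vanishes on $\partial B(0,n)$, so the scalar maximum principle gives $v_n(t,x)\le e^{Ht}(S_{\nu,n}(t)v_n(0,\cdot))(x)\le Ce^{Ht}\bigl(S_\nu(t)\sum_{j=0}^{k}|D^j\f|^2\bigr)(x)$; dividing by $a_\ell t^{\ell-k}$ and letting $n\to\infty$ via Remark~\ref{rmk-1} yields \eqref{pointwise} for $\f\in C_c^\infty$ on $(0,T^\star]$, and iterating the semigroup extends the estimate to all $t>0$. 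A mollification-truncation of $\f\in C_b^k(\R^d;\R^m)$, combined with Remark~\ref{rmk-1} on the left-hand side and the representation \eqref{int_rep} with dominated convergence on the right-hand side, completes the proof. The main technical obstacle is the Bernstein calculation at the level $\ell=3$: one has to split the numerous cross terms involving up to third-order derivatives of the coefficients using the twelve exponents $\alpha_i,\mu_i,\rho_i,\tau_i$, and check that the resulting zero-order coefficient of each $|D^j\uu_n|^2$ ($j=0,1,2,3$) slots exactly into one of the four sup-bounds of Hypothesis~\ref{hyp-derivata-3}; the squared combinations $(\hat\beta_0\lambda_Q^{-1/2}+j\sqrt{A_1}\xi_1^{1-\alpha_1/2})^2$ with $j=1,2,3$ are precisely what emerges from accumulating the $\hat\beta_0$- and $\xi_1$-cross terms against $-2\lambda_Q|D^{j+1}\uu_n|^2$ after the repeated Cauchy/Young splittings at each derivative level.
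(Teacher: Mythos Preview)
Your overall architecture is exactly that of the paper: Bernstein auxiliary function on Cauchy--Dirichlet approximants, Young splittings matched to the exponents $\alpha_i,\mu_i,\rho_i,\tau_i$, absorption of the $|D^{j+1}\uu_n|^2$ pieces into the diffusive term, then the scalar maximum principle against $S_{\nu,n}$ and passage to the limit via Remark~\ref{rmk-1}. The identification of the squares $(\hat\beta_0\lambda_Q^{-1/2}+j\sqrt{A_1}\xi_1^{1-\alpha_1/2})^2$ is also correct.

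There is, however, a genuine gap. Your auxiliary function
\[
v_n=\sum_{j=0}^{k}a_j|D^j\uu_n|^2+\sum_{j=k+1}^{\ell}a_j t^{\,j-k}|D^j\uu_n|^2
\]
does \emph{not} vanish on $\partial B(0,n)$: only $\uu_n$ is zero there, while the normal derivatives $D^j\uu_n$ for $j\ge 1$ are not. (Relatedly, Theorems~\ref{teo-reg-int}--\ref{thm-A2} are interior estimates and do not give you $C^{\ell+1}$-regularity of $\uu_n$ up to $\partial B(0,n)$.) Without zero lateral boundary data you cannot run the comparison $v_n\le e^{ct}S_{\nu,n}(t)v_n(0,\cdot)$. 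The paper repairs this by inserting a spatial cutoff: it uses
\[
v_{k,n}^{(\ell)}=|\uu_n|^2+\sum_{j=1}^{\ell}a^jt^{j_k}\vartheta_n^{2j}|D^j\uu_n|^2,
\]
with $\vartheta_n(x)=\psi(|x|/n)$, so that the higher-order pieces vanish near the boundary and only interior regularity is needed. This cutoff produces two new families of terms in $D_tv-\A_\nu v$, involving $|Q^{1/2}\nabla\vartheta_n|$ and $\A_0\vartheta_n$, which are controlled precisely through Hypothesis~\ref{base}(v) (estimates \eqref{qtheta}--\eqref{Atheta}); your sketch never invokes that hypothesis, which is a symptom of the missing ingredient. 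A second, minor point: you aim for $D_tv_n-\A_\nu v_n\le Hv_n$ with the exact constant $H$, but the Bernstein computation only yields $\le c_3 v_n$ for some $c_3\ge H$; the stated rate $e^{Ht}$ in \eqref{pointwise} comes afterwards, by using the short-time bound on $(0,1]$ together with the semigroup law and \eqref{cossali} for $t>1$.
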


\begin{proof}
Let $\ell, k$ and $\f$ be as in the statement. We will show that
\begin{equation}\label{poi-pro}
t^{\ell-k}|D^{\ell} {\bm T}(t)\f(x)|^2 \le c_*\bigg (S_{\nu}(t)\sum_{j=0}^k|D^j\f|^2\bigg )(x),\qquad\;\,t\in (0,1],\;\,x\in\R^d,
\end{equation}
for some positive constants $c_*$, independent of  $t$, $x$ and $\f$. Once \eqref{poi-pro} is proved, the case $t \in (1,\infty)$ can be covered by using the semigroup law and the fact that the semigroup $\{S_{\nu}(t)\}_{t\ge 0}$ is positive.
Indeed, fix $t\in (1,\infty)$. Thanks to \eqref{poi-pro} (with $k=0$) and \eqref{cossali}, which show that $|D^{\ell}\bm{T}(1)\bm{g}|^2\le c_*S_{\nu}(1)|\bm{g}|^2$ and $|\bm{T}(t-1)\bm{g}|^2\le e^{H(t-1)}S_{\nu}(t-1)|\bm{g}|^2$ for every $\bm{g}\in C_b(\R^d;\R^m)$,
and the semigroup law, we can estimate 
\begin{align*}
|D^\ell {\bm T}(t)\f|^2=|D^{\ell}\big({\bm T}(1){\bm T}(t-1)\f\big)|^2
\le c_*S_{\nu}(1)|{\bm T}(t-1)\f|^2
=c_*e^{H(t-1)}S_{\nu}(t)|\f|^2.
\end{align*}
Hence \eqref{pointwise} follows.

Now, in order to prove \eqref{poi-pro} we introduce a few notation and some preliminary estimates that we use throuoghout the proof. Given a smooth enough function $\bm{w}:\R^d\ra\R^m$, we set
\begin{align*}
%\begin{array}{ll}
&\mathfrak{q}_0(\bm{w})=\displaystyle\sum_{j=1}^m\langle Q\nabla w_j,\nabla w_j\rangle,\quad & &
\mathfrak{q}_1(\bm{w})=\displaystyle\sum_{j=1}^m\sum_{h,i,l=1}^d\partial_hq_{il}\partial_hw_j\partial_{il}w_j,\\ %[1mm]
&\displaystyle\mathfrak{q}_2(\bm{w})=\sum_{j=1}^d\sum_{h,i,l,s=1}^d\partial_{hs}q_{il}\partial_{hs}w_j\partial_{il}w_j, & & \displaystyle\mathfrak{b}_1(\bm{w})=\sum_{j=1}^m\langle D^1\bm{b}\nabla w_j,\nabla w_j\rangle,\\ %[1mm]
&\displaystyle\mathfrak{b}_2(\bm{w})=
\sum_{j=1}^m\sum_{h,l,s=1}^d\partial_{hs}b_l\partial_{hs}w_j\partial_lw_j, & & \displaystyle\mathfrak{B}_0(\bm{w})=\sum_{i,j=1}^m\sum_{l=1}^d(\widehat B_l)_{ji}w_j\partial_lw_i,\\%[1mm]
&\displaystyle\mathfrak{B}_1(\bm{w})=\sum_{i,j=1}^m\sum_{h,l=1}^d\partial_h(\widehat B_l)_{ji}\partial_hw_j\partial_{l}w_i, & &
\displaystyle\mathfrak{B}_2(\bm{w})=\sum_{i,j=1}^m\sum_{h,l,s=1}^d\partial_{hs}(\widehat B_l)_{ji}\partial_lw_i\partial_{hs}w_j,\\%[1mm]
&\mathfrak{c}_0(\bm{w})=\langle C\bm{w},\bm{w}\rangle, & &
\displaystyle\mathfrak{c}_1(\bm{w})=\sum_{i,j=1}^m\partial_hc_{ji}w_i\partial_hw_j,\\%[1mm]
&\displaystyle\mathfrak{c}_2(\bm{w})=\sum_{j=1}^m\sum_{h,r,s=1}^d\partial_{hs}c_{ji}w_i\partial_{hs}w_j,
& &\displaystyle\mathfrak{O}(\bm{w})=\sum_{j=1}^m\sum_{h=1}^d
\langle Q\nabla\partial_hw_j,\nabla\theta\rangle\partial_hw_j.
%\end{array}
\end{align*}
% Similarly, we set $\mathfrak{b}_0(\bm{w})=\sum_{j=1}^m\langle J_b\nabla w_j,\nabla w_j\rangle$, $\mathfrak{c}_0(\bm{w})=\langle C\bm{w},\bm{w}\rangle$ and define  $\mathfrak{b}_j(\bm{w})$, $\mathfrak{c}_j(\bm{w})$ $(j=1,2,3)$ as the corresponding terms $\mathfrak{q}_j(\bm{w})$, with $\mathfrak{q}_0$ being replaced by $\mathfrak{b}_0$ and $\mathfrak{c}_0$, respectively.
Note that 
\begin{align}\label{stup_estim}
\mathfrak{q}_0(\bm{w})\ge\lambda_Q|D^1\bm{w}|^2,\qquad\;\,\mathfrak{b}_1(\bm{w})\le\Lambda_{D^1{\bm b}}|D^1\bm{w}|^2,\qquad\;\,\mathfrak{c}_0(\bm{w})\le\Lambda_C|\bm{w}|^2.
\end{align}
In what follows it will be useful to consider the sequence $(\vartheta_n)_{n\in\N}$ defined by $\vartheta_n(x):=\psi(n^{-1}|x|)$ for every $x\in\Rd$ and every $n\in\N$, where the function $\psi$ is a smooth decreasing function in $[0,\infty)$ such that $\chi_{[0,1]}(\xi)\le \psi(\xi) \le \chi_{[0,2]}(\xi)$ for any $\xi \in [0,\infty)$. Taking \eqref{maxaut<minaut} into account, it is easy to show that, for any $n\le |x|\le 2n$, we can estimate
\begin{align}\label{qtheta}
|Q^{\frac{1}{2}}(x)\nabla \vartheta_n(x)|=\frac{|Q^{\frac{1}{2}}(x)x|}{n|x|}\left|\psi'\bigg (\frac{|x|}{n}\right)\bigg |
\le 2|x|^{-1}(\Lambda_Q(x))^{1/2}\|\psi'\|_\infty
\le 2\sqrt{2\kappa}c_1(\lambda_Q(x))^{1/2},
\end{align}
where $c_1=\|\psi'\|_{\infty}$ and $\kappa$ is the constant introduced in \eqref{maxaut<minaut}. In a similar way, letting $\A_0={\rm Tr}(QD^2)+\langle\bm{b},\nabla\rangle$, it holds
\begin{align*}
({\mathcal A}_0\vartheta_n)(x)=\frac{\psi'(n^{-1}|x|)}{n|x|}\pa{{\rm Tr}(Q(x))-\frac{\langle Q(x)x,x\rangle}{|x|^2}+\langle \bm{b}(x),x\rangle}
+\psi''(n^{-1}|x|)\frac{\langle Q(x)x,x\rangle}{n^2|x|^2},
\end{align*}
by Hypothesis \ref{base}(v), we can estimate
\begin{align}\label{Atheta}
{\mathcal A}_0\vartheta_n\ge -
c_2\lambda_Q(x),\qquad\;\, x\in\Rd\setminus B(0,n),
\end{align}
for every $n\ge n_0$, and a suitable constant $c_2$, which depends on  $\kappa$ (the constant introduced in \eqref{maxaut<minaut}) and $\|\psi''\|_{\infty}$.

We can now prove the claim  under Hypotheses \ref{hyp-derivata-3}, the other cases will be treated later. For a fixed $\f \in C^{\infty}_c(\Rd;\R^m)$ and $n\ge n_0$, let us consider the classical solution $\uu_n$ to the Cauchy--Dirichlet problem
\begin{equation*}
\left\{
\begin{array}{ll}
D_t\ww(t,x)=(\bm{\mathcal A}\ww)(t,x),\qquad\;\, &(t,x)\in (0,\infty)\times B(0,n),\\[1mm]
\ww(t,x)=\bm{0}, & (t,x)\in (0,\infty)\times\partial B(0,n),\\[1mm]
\ww(0,x)=\f(x),& x \in B(0,n).
\end{array}
\right.
\end{equation*}
By Theorem \ref{teo-reg-int},  $\uu_n$ belongs to $\mathcal{C}^{1+\alpha/2,5+\alpha}_{\rm loc}([0,T]\times B(0,n))$ for every $T>0$. For $\ell$ and $k$ as in the statement, we set $\ell_k:=\max\{0,\ell-k\}$. 
Moreover, we set $r_{2,k}=2_k-1_k$ and $r_{3,k}=3_k-2_k$. For any $a \in (0,1)$ the function $v_{k,n}^{(3)}:[0,\infty)\times\overline{B(0,n)}\to\R$, defined by
\begin{align*}
v_{k,n}^{(3)}(t,x):=&|\uu_n(t,x)|^2+at^{1_k}(\vartheta_n(x))^2|D^1\uu_n(t,x)|^2+a^2t^{2_k}(\vartheta_n(x))^4|D^2\uu_n(t,x)|^2\\
&+a^3t^{3_k}(\vartheta_n(x))^6|D^3\uu_n(t,x)|^2
\end{align*}
for every $(t,x)\in [0,\infty)\times\overline{B(0,n)}$, belongs to $C^{1,2}((0,\infty)\times B(0,n))$ and is bounded and continuous in
$([0,T]\times\overline{B(0,n)})\setminus (\{0\}\times\partial B(0,n))$
for every $T>0$. 
To lighten the notation we drop the subscripts $k$ and $n$ in $v_{k,n}^{(3)}$.

Taking into account that $\ell_kt^{\ell_k-1}=\ell_kt^{(\ell-1)_k}$ for every $t\in (0,1]$ and $\ell,k$ as in the statement  of the theorem, a very long but straightforward computation yields that
$D_tv^{(3)}-{\mathcal A}_{\nu}v^{(3)}=\sum_{i=0}^4 g_{i,i}^{(3)}+\sum_{i=0}^3g_{i,i+1}^{(3)}+\sum_{i=0}^1g_{i,i+2}^{(3)}+g_{0,3}^{(3)}=:G^{(3)}$, with
\begin{align*}
g_{0,0}^{(3)}=& 2\mathfrak{c}_0(\uu)-\nu\Lambda_C|\uu|^2;\\[1mm]
g_{1,1}^{(3)}=&-2\mathfrak{q}_0(\uu)+1_ka\vartheta^2|D^1\uu|^2+2at^{1_k}\vartheta^2\mathfrak{b}_1(\uu)+2at^{1_k}\vartheta^2\sum_{h=1}^d\mathfrak{c}_0(\partial_h\uu)
-a\nu t^{1_k}\Lambda_C\vartheta^2|D^1\uu|^2\\
&+2at^{1_k}\vartheta^2\mathfrak{B}_1(\uu)-2at^{1_k}\langle Q\nabla\vartheta,\nabla\vartheta\rangle|D^1\uu|^2-2at^{1_k}\vartheta{\mathcal A}_0\vartheta|D^1\uu|^2;\\[1mm]
g_{2,2}^{(3)}=& -2at^{1_k}\vartheta^2\sum_{h=1}^d\mathfrak{q}_0(\partial_h\uu)+2_ka^2t^{1_k}\vartheta^4|D^2 \uu|^2+2a^2t^{2_k}\vartheta^4\mathfrak{q}_2(\uu)+4a^2t^{2_k}\vartheta^4\sum_{h=1}^d\mathfrak{b}_1(\partial_h\uu)\\
&+4a^2t^{2_k}\vartheta^4\sum_{s=1}^d\mathfrak{B}_1(\partial_s\uu)+2a^2t^{2_k}\vartheta^4\sum_{h,r=1}^d\mathfrak{c}_0(\partial_{hr}\uu)-12a^2t^{2_k}\vartheta^2\langle Q\nabla\vartheta,\nabla\vartheta\rangle|D^2\uu|^2\\
&-4a^2t^{2_k}\vartheta^3{\mathcal A}_0\vartheta|D^2\uu|^2
-a^2\nu t^{2_k}\Lambda_C\vartheta^4|D^2\uu|^2;\\[1mm]
g_{3,3}^{(3)}=& -2a^2t^{2_k}\vartheta^4\sum_{h,r=1}^d\mathfrak{q}_0(\partial_{hr}\uu)+3_ka^3t^{2_k}\vartheta^6|D^3\uu|^2+6a^3t^{3_k}\vartheta^6\sum_{r=1}^d\mathfrak{q}_2(\partial_ru)+6a^3t^{3_k}\vartheta^6\sum_{h,r=1}^d\mathfrak{b}_1(\partial_{hr}\uu)\\
&
+6a^3t^{3_k}\vartheta^6\sum_{j=1}^m\sum_{r,s=1}^d\mathfrak{B}_1(\partial_{rs}\uu)
+2a^3t^{3_k}\vartheta^6\sum_{h,r,s=1}^d\mathfrak{c}_0(\partial_{hrs}\uu)-30a^3t^{3_k}\vartheta^4\langle Q\nabla\vartheta,\nabla\vartheta\rangle|D^3\uu|^2\\
&-6a^3t^{3_k}\vartheta^5{\mathcal A}_0\vartheta|D^3\uu|^2-a^3\nu t^{3_k}\Lambda_C\vartheta^6|D^3\uu|^2;\\[1mm]
g_{4,4}^{(3)}=&-2a^3t^{3_k}\vartheta^6\sum_{h,r,s=1}^d\mathfrak{q}_0(\partial_{hrs}\uu);\\[1mm]
g_{0,1}^{(3)}=& 2\mathfrak{B}_0(\uu)+2at^{1_k}\vartheta^2\mathfrak{c}_1(\uu);\\[1mm]
g_{1,2}^{(3)}=& 2at^{1_k}\vartheta^2\mathfrak{q}_1(\uu)+2at^{1_k}\vartheta^2\sum_{h=1}^d\mathfrak{B}_0(\partial_h\uu)-8at^{1_k}\vartheta\mathfrak{O}(\uu)+2a^2t^{2_k}\vartheta^4\mathfrak{b}_2(\uu)+2a^2t^{2_k}\vartheta^4\mathfrak{B}_2(\uu)\\
&+4a^2t^{2_k}\vartheta^4\sum_{s=1}^d\mathfrak{c}_1(\partial_s\uu);\\[1mm]
g_{2,3}^{(3)}=&4a^2t^{2_k}\vartheta^4\sum_{s=1}^d\mathfrak{q}_1(\partial_s\uu)+2a^2t^{2_k}\vartheta^4\sum_{s=1}^d\mathfrak{B}_0(\partial_{hs}\uu)-16a^2t^{2_k}\vartheta^3\sum_{s=1}^d
\mathfrak{O}(\partial_s\uu)+6a^3t^{3_k}\vartheta^6\sum_{r=1}^d\mathfrak{b}_2(\partial_r\uu)\\
&+2a^3t^{3_k}\vartheta^6\sum_{h,i,l,r,s=1}^d\sum_{j=1}^m\partial_{hrs}q_{li}\partial_{li}u_j\partial_{hrs}u_j
+6a^3t^{3_k}\vartheta^6\sum_{r=1}^d\mathfrak{B}_2(\partial_r\uu)+6a^3t^{3_k}\vartheta^6\sum_{r,s=1}^d\mathfrak{c}_1(\partial_{rs}\uu);\\[1mm]
g_{3,4}^{(3)}=& 6a^3t^{3_k}\vartheta^6\sum_{r,s=1}^d\mathfrak{q}_1(\partial_{rs}\uu)
+2a^3t^{3_k}\vartheta^6\sum_{r,s=1}^d\mathfrak{B}_0(\partial_{hrs}\uu)-24a^3t^{3_k}\vartheta^5\sum_{r,s=1}^d\mathfrak{O}(\partial_{rs}\uu);\\[1mm]
g_{0,2}^{(3)}=& 2a^2t^{2_k}\vartheta^4\mathfrak{c}_2(\uu);\\[1mm]
g_{1,3}^{(3)}=& 2a^3t^{3_k}\vartheta^6\sum_{h,l,r,s=1}^d\sum_{j=1}^m\partial_{hrs}b_{l}\partial_{l}u_j\partial_{hrs}u_j+2a^3t^{3_k}\vartheta^6\sum_{j=1}^m\sum_{h,l,r,s=1}^d\sum_{\substack{i=1\\i\neq j}}^m\partial_{hrs}(\hat{B}_l)_{ji}\partial_{l}u_i\partial_{hrs}u_j\\
&+6a^3t^{3_k}\vartheta^6\sum_{r=1}^d\mathfrak{c}_2(\partial_r\uu);\\[1mm]
g_{0,3}^{(3)}=&2a^3t^{3_k}\vartheta^6\sum_{i,j=1}^m\sum_{h,r,s=1}^d\partial_{hrs}c_{ji}u_i\partial_{hrs}u_j.
\end{align*}

\paragraph{\bf Estimates of the terms $\bm{g_{i,i}}^{(3)}$} Clearly, we can estimate $g_{0,0}^{(3)}$ from above by $(2-\nu)\Lambda_C|\uu|^2$.
Thanks to the Young inequality, we can write
\begin{align}
2at^{1_k}\vartheta^2\mathfrak{B}_1(\uu)\leq 2at^{1_k}\vartheta^2\hat{\beta}_1d|D^1\uu|^2\le  t^{1_k}\big (\sqrt{a}\vartheta^2d\hat{\beta}_1^{{\mu_1}}+a\sqrt{a}\vartheta^2d\hat{\beta}_1^{2-{\mu_1}}\big )|D^1\uu|^2.
\label{B1-est}
\end{align}
On the other hand, thanks to \eqref{Atheta}, it follows that
$-2a\vartheta{\mathcal A}_0\vartheta\leq 2a\vartheta c_2\lambda_Q \le 2ac_2\lambda_Q$.
Consequently, recalling that $\langle Q \nabla \vartheta,\nabla \vartheta\rangle \ge 0$ and %$\widehat\beta_1^{\mu_1}\le A_2\lambda_Q$
Hypotheses \ref{hyp-derivata-1}(i), we deduce
that
\begin{align}
g_{1,1}^{(3)}\leq &\big[-2\lambda_Q+1_ka\vartheta^2+\sqrt{a}dA_2\vartheta^2t^{1_k}\lambda_Q+2at^{1_k}\vartheta^2(\Lambda_{D^1{\bm b}}+\Lambda_C)-a\nu t^{1_k}\Lambda_C\vartheta^2\notag\\
&\;+a\sqrt{a}t^{1_k}\vartheta^2d\hat{\beta}_1^{2-{\mu_1}}+2at^{1_k}c_2\lambda_Q\big ]|D^1\uu|^2.
\label{estim-g11}
\end{align}

Arguing similarly, estimating the third, fifth (taking \eqref{B1-est} into account) and eighth terms in $g_{2,2}^{(3)}$ as follows:
\begin{align}
&2a^2t^{2_k}\vartheta^4\mathfrak{q}_2(\uu)\leq 2a^2 t^{2_k}\vartheta^4\xi_2 d|D^2\uu|^2\leq a^2dt^{2_k}\bigg (\frac{1}{\sqrt{a}}\vartheta^2\xi_2^{\alpha_2}+\sqrt{a}\vartheta^4\xi_2^{2-\alpha_2}\bigg )|D^2\uu|^2;\label{q2-est}\\
&4a^2t^{2_k}\vartheta^4\sum_{s=1}^d\mathfrak{B}_1(\partial_s\uu)\leq 4a^2t^{2_k}\vartheta^4\hat{\beta}_1 d|D^2\uu|^2\leq \big (2a\sqrt{a}t^{2_k}\vartheta^2d\hat{\beta}_1^{{\mu_1}}+2a^2\sqrt{a}t^{2_k}\vartheta^4\hat{\beta}_1^{2-{\mu_1}}\big )|D^2\uu|^2\notag
\end{align}
and
$-4a^2\vartheta^3\A_0\vartheta
\le 4a^2\vartheta^2c_2\lambda_Q$,
we obtain (taking Hypotheses \ref{hyp-derivata-3}(i) into account) that
\begin{align}
g_{2,2}^{(3)}\le & -2at^{1_k}\vartheta^2\sum_{h=1}^d\mathfrak{q}_0(\partial_h\uu)\notag\\
&+\big(2_ka^2t^{1_k}\vartheta^4+a\sqrt{a}A_3dt^{2_k}\vartheta^2\lambda_Q+a^2\sqrt{a}t^{2_k}\vartheta^4d\xi_2^{2-{\alpha_2}}+4a^2t^{2_k}\vartheta^4\Lambda_{D^1{\bm b}}+
2a\sqrt{a}A_2t^{2_k}\vartheta^2d\lambda_Q\notag\\
&\quad\;\,+2a^2\sqrt{a}t^{2_k}\vartheta^4\hat{\beta}_1^{2-{\mu_1}}+ (2-\nu)a^2t^{2_k}\vartheta^4\Lambda_C+4a^2t^{2_k}\vartheta^2c_2\lambda_Q\big)|D^2\uu|^2.
\label{estim-g22}
\end{align}

Next, taking \eqref{B1-est} and \eqref{q2-est} into account, we estimate the third and the fifth terms in $g_{3,3}^{(3)}$ as follows:
\begin{align*}
&6a^3t^{3_k}\vartheta^6\sum_{r=1}^d\mathfrak{q}_2(\partial_r\uu)
\leq 3a^2\sqrt{a}t^{3_k}\vartheta^4d\xi_2^{\alpha_2}|D^3\uu|^2
+3a^3t^{3_k}\vartheta^6d\sqrt{a}\xi_2^{2-\alpha_2}|D^3\uu|^2;\\
&6a^3t^{3_k}\vartheta^6\sum_{r,s=1}^d\mathfrak{ B}_1(\partial_{rs}\uu)\leq 3a^2\sqrt{a}t^{3_k}\vartheta^4d\widehat\beta_1^{\mu_1}|D^3\uu|^2
+3a^3\sqrt{a}t^{3_k}\vartheta^6d\widehat\beta_1^{2-\mu_1}|D^3\uu|^2.
\end{align*}
Moreover, using again \eqref{Atheta} we get the following estimate for the last term in $g_{3,3}^{(3)}$:
$-6a^3t^{3_k}\vartheta^5\tilde{\mathcal A}\vartheta
\le 6a^3t^{3_k}\vartheta^4c_2\lambda_Q$. 
Putting all together and taking Hypothesis \ref{hyp-derivata-3}(i) into account, we conclude that
\begin{align*}
g_{3,3}^{(3)}\leq &a^2t^{2_k}\vartheta^4\bigg [-2\sum_{h,r=1}^d\mathfrak{q}_0(\partial_{hr}\uu)+3\sqrt{a}dt^{r_{3,k}}(A_2+A_3)\lambda_Q|D^3\uu|^2+a\big (3_k
+6t^{r_{3,k}}c_2\lambda_Q\big )
|D^3\uu|^2\bigg ]%\notag
\\
&+a^3t^{3_k}\vartheta^6\big [6\Lambda_{D^1{\bm b}}+(2-\nu)\Lambda_C
+3d\sqrt{a}\big (\xi_2^{2-\alpha_2}+\hat{\beta}_1^{2-\mu_1}\big )
\big ]|D^3\uu|^2.
%\label{estim-g33}
\end{align*}

\paragraph{\bf Estimates of the terms $\bm{g_{i,i+1}}^{(3)}$}
As far as $g_{0,1}^{(3)}$ is concerned, properly using the Young inequality, we deduce that
\begin{align}
2\mathfrak{B}_0(\uu)\le \frac{\widehat\beta_0^2d}{\lambda_Q\varepsilon_4}|\uu|^2
+\varepsilon_4\lambda_Q|D\uu|^2,\qquad\;\,
2at^{1_k}\vartheta^2\mathfrak{c}_1(\uu)\le 
at^{1_k}\vartheta^2\pa{\frac{d\gamma_1^{{{\tau_1}}}}{\sqrt{a}}|\uu|^2+\sqrt{a}\gamma_1^{2-{{\tau_1}}}|D^1\uu|^2}
\label{B0-c1-est}
\end{align}
and conclude that
\begin{align}
g_{0,1}^{(3)}\leq &\bigg (\frac{\hat{\beta}_0^2d}{\lambda_Q\varepsilon_4} +\sqrt{a}dt^{1_k}\vartheta^2\gamma_1^{{{\tau_1}}}\bigg )|\uu|^2+\pa{\varepsilon_4\lambda_Q+a\sqrt{a}t^{1_k}\vartheta^2\gamma_1^{2-{{\tau_1}}}}|D^1\uu|^2.
\label{estim-g01}
\end{align}

Concerning $g_{1,2}^{(3)}$ we can estimate its terms as follows:
\begin{align}
2at^{1_k}\vartheta^2\mathfrak{q}_1(\uu)
\leq at^{1_k}\vartheta^2\pa{\frac{d\xi_1^{2-\alpha_1}}{\eps_1}|D^1\uu|^2+\eps_1\xi_1^{\alpha_1}|D^2\uu|^2}
\label{num-0}
\end{align}
for every $\varepsilon_1>0$, and, using the condition $\max_{i=1,\ldots,d}|\widehat B_i(x)|\le\widehat\beta_0(x)$ for every $x\in\R^d$, in Hypotheses \ref{base}(iii), we obtain
\begin{equation}
2at^{1_k}\vartheta^2\sum_{h=1}^d\mathfrak{B}_0(\partial_h\uu)\leq
a\frac{\widehat\beta_0^2d}{\lambda_Q\varepsilon_5}t^{1_k}\vartheta^2|D^1\uu|^2
+a\varepsilon_5t^{1_k}\vartheta^2
\lambda_Q|D^2\uu|^2.
\label{B0-est}
\end{equation}
Moreover, taking \eqref{qtheta} into account, we get
\begin{align}
-8at^{1_k}\vartheta\mathfrak{O}(\uu)
&\leq 8at^{1_k} \vartheta \sum_{j=1}^m \sum_{h=1}^d|Q^{\frac{1}{2}}\nabla \partial_hu_j||Q^{\frac{1}{2}}\nabla \vartheta||\partial_hu_j|\notag\\
&\leq 8at^{1_k} \vartheta\bigg (\sum_{j=1}^m \sum_{h=1}^d|Q^{\frac{1}{2}}\nabla \partial_hu_j|^2\bigg )^{\frac{1}{2}}|D^1\uu||Q^{\frac{1}{2}}\nabla \vartheta|\notag\\
& \le 4a\sqrt{a}t^{1_k}\vartheta^2\sum_{h=1}^d\mathfrak{q}_0(\partial_h\uu) +4\sqrt{a}t^{1_k} |Q^{\frac{1}{2}}\nabla \vartheta|^2|D^1\uu|^2\notag\\
& \le 4a\sqrt{a}t^{1_k}\vartheta^2\sum_{h=1}^d\mathfrak{q}_0(\partial_h\uu)+32\sqrt{a}t^{1_k}\kappa\lambda_Qc_1^2|D^1\uu|^2.
\label{num}
\end{align}
Further, it holds that
\begin{align}
&2a^2t^{2_k}\vartheta^4\mathfrak{b}_2(\uu)\leq a\sqrt{a}t^{2_k}\vartheta^4\beta_2^{\rho_2} d^3|D^1\uu|^2+
a^2\sqrt{a}t^{2_k}\vartheta^4
\beta_2^{2-\rho_2}|D^2\uu|^2;\label{b2-est}\\[1mm]
&2a^2t^{2_k}\vartheta^4\mathfrak{B}_2(\uu)\leq a\sqrt{a}t^{2_k}\vartheta^4\hat{\beta}_2^{\mu_2} d^3|D^1\uu|^2+a^2\sqrt{a}t^{2_k}\vartheta^4\hat{\beta}_2^{2-\mu_2}|D^2\uu|^2;
\label{B2-est}\\[1mm]
&4a^2t^{2_k}\vartheta^4\sum_{s=1}^d\mathfrak{c}_1(\partial_s\uu)\leq 2a\sqrt{a}t^{2_k}\vartheta^4\gamma_1^{2-\tau_1}d|D^1\uu|^2+2a^2\sqrt{a}t^{2_k}\vartheta^4\gamma_1^{\tau_1}|D^2\uu|^2.\notag
\end{align}
Summing up, using again Hypothesis \ref{hyp-derivata-3}(i), we conclude that
\begin{align}
g_{1,2}^{(3)}\le & 4a\sqrt{a}t^{1_k}\vartheta^2\sum_{h=1}^d\mathfrak{q}_0(\partial_h\uu)+a\vartheta^2\frac{\widehat\beta_0^2d}{\lambda_Q\varepsilon_5}t^{1_k}|D^1\uu|^2\notag\\
&+\sqrt{a}t^{1_k}\bigg (\sqrt{a}\vartheta^2\frac{d\xi_1^{2-\alpha_1}}{\eps_1}+32\kappa c_1^2\lambda_Q+at^{r_{2,k}}\vartheta^4d(\beta_2^{\rho_2}d^2+\hat{\beta}_2^{\mu_2} d^2+2\gamma_1^{2-\tau_1})\bigg )|D^1\uu|^2\notag\\
&+at^{1_k}\big [A_1\eps_1\vartheta^2\lambda_Q+\varepsilon_5\vartheta^2\lambda_Q+a\sqrt{a}t^{r_{2,k}}\vartheta^4(\beta_2^{2-{\rho_2}}+\hat{\beta}_2^{2-{{\mu_2}}}+2\gamma_1^{\tau_1})\big ]|D^2\uu|^2.
\label{estim-g12}
\end{align}

Now, to estimate $g_{2,3}^{(3)}$ we use the following inequalities (that are all, but the third one, direct consequences of \eqref{num-0}, \eqref{B0-est}, \eqref{b2-est}, \eqref{B2-est} and \eqref{B0-c1-est}, respectively): 
\begin{align}
&4a^2t^{2_k}\vartheta^4\sum_{s=1}^d\mathfrak{q}_1(\partial_s\uu)\leq  2a^2t^{2_k}\vartheta^4\pa{\frac{d\xi_1^{2-\alpha_1}}{\eps_2}|D^2\uu|^2+\eps_2\xi_1^{\alpha_1}|D^3\uu|^2};
\label{g23-1}
\\[1mm]
&2a^2t^{2_k}\vartheta^4\sum_{s=1}^d\mathfrak{B}_0(\partial_{hs}\uu)\leq a^2\frac{\hat{\beta}_0^2d}{\lambda_Q\varepsilon_6}t^{2_k}\vartheta^4|D^2\uu|^2+\varepsilon_6a^2t^{2_k}\vartheta^4\lambda_Q|D^3\uu|^2;
\notag%\label{g23-2}
\\[1mm]
&6a^3t^{3_k}\vartheta^6\sum_{r=1}^d\mathfrak{b}_2(\partial_r\uu)\leq 3a^2\sqrt{a}t^{3_k}\vartheta^6\beta_2^{2-\rho_2} d^3|D^2\uu|^2+3a^3\sqrt{a}t^{3_k}\vartheta^6\beta_2^{\rho_2}|D^3\uu|^2;\notag\\[1mm]
&2a^3t^{3_k}\vartheta^6\sum_{h,i,l,r,s=1}^d\sum_{j=1}^m\partial_{hrs}q_{li}\partial_{il}u_j\partial_{hrs}u_j \leq a^2\sqrt{a}t^{3_k}\vartheta^6\xi_3^{\alpha_3} d^3|D^2\uu|^2+a^3\sqrt{a}t^{3_k}\vartheta^6\xi_3^{2-\alpha_3}|D^3\uu|^2;\notag\\[1mm]
&6a^3t^{3_k}\vartheta^6\sum_{r=1}^d\mathfrak{B}_2(\partial_r\uu)\leq 3a^2\sqrt{a}t^{3_k}\vartheta^6\hat{\beta}^{2-\mu_2}_2 d^3|D^2\uu|^2+3a^3\sqrt{a}t^{3_k}\vartheta^6\hat{\beta}^{\mu_2}_2|D^3\uu|^2;\notag\\[1mm]
&6a^3t^{3_k}\vartheta^6\sum_{r,s=1}^d\mathfrak{c}_1(\partial_{rs}\uu)\leq 3a^2\sqrt{a}t^{3_k}\vartheta^6\gamma_1^{\tau_1}d|D^2\uu|^2+3a^3\sqrt{a}t^{3_k}\vartheta^6\gamma_1^{2-{\tau_1}}|D^3\uu|^2.
\notag
\end{align}
From \eqref{num}, we deduce that
\begin{align}
-16a^2t^{2_k}\vartheta^3\sum_{s=1}^d\mathfrak{O}(\partial_s\uu)
\le 8a^2\sqrt{a}t^{2_k}\vartheta^4\sum_{h,r=1}^d\mathfrak{q}_0(\partial_{hr}\uu)+64 a\sqrt{a}t^{2_k}\vartheta^2\kappa c_1^2\lambda_Q|D^2\uu|^2,
\label{g23-3}
\end{align}
obtaining
\begin{align*}
g_{2,3}^{(3)}\leq& 8a^2\sqrt{a}t^{2_k}\vartheta^4\sum_{h,r=1}^d\mathfrak{q}_0(\partial_{hr}\uu)
+\bigg(2a^2t^{2_k}\vartheta^4\frac{d\xi_1^{2-\alpha_1}}{\eps_2}+a^2t^{2_k}\frac{\widehat\beta_0^2d}{\lambda_Q\varepsilon_6}\vartheta^4+64a\sqrt{a}t^{2_k}\kappa c_1^2\vartheta^2\lambda_Q\\
&\qquad\qquad\qquad\qquad\qquad\qquad\quad\;\,
+a^2\sqrt{a}t^{3_k}\vartheta^6d\big (d^2\xi_3^{\alpha_3}
+3d^2\beta_2^{2-\rho_2}+3d^2\hat{\beta}^{2-\mu_2}_2 +3\gamma_1^{\tau_1}\big )\bigg)|D^2\uu|^2\\
&+\big[a^2t^{2_k}\vartheta^4(2A_1\eps_2+\varepsilon_6)\lambda_Q+a^3\sqrt{a}t^{3_k}\vartheta^6\xi_3^{2-\alpha_3}+3a^3\sqrt{a}t^{3_k}\vartheta^6\beta_2^{\rho_2}\\
&\qquad+3a^3\sqrt{a}t^{3_k}\vartheta^6\hat{\beta}^{\mu_2}_2+3a^3\sqrt{a}t^{3_k}\vartheta^6\gamma_1^{2-{\tau_1}}\big)|D^3\uu|^2.
\end{align*}
Finally, the term $g_{3,4}^{(3)}$ can be estimated as follows:
\begin{align*}
g_{3,4}^{(3)}\le &3a^3t^{3_k}\vartheta^6\bigg (\frac{d\xi_1^{2-\alpha_1}}{\eps_3}|D^3\uu|^2+\eps_3\xi_1^{\alpha}|D^4\uu|^2\bigg )+a^3t^{3_k}\vartheta^6\bigg (\frac{\widehat\beta_0^2d}{\lambda_Q\varepsilon_7}|D^3\uu|^2+\varepsilon_7\lambda_Q|D^4\uu|^2\bigg )\\
&+24a^3t^{3_k}\vartheta^5|Q^{\frac{1}{2}}\nabla\vartheta|\sum_{j=1}^m\sum_{h,r,s=1}^d|Q^{\frac{1}{2}}\nabla\partial_{hrs}u_j| |\partial_{hrs}u_j|\\
\leq &-4\sqrt{a}g_{44}^{(3)}+\bigg (144a^2\sqrt{a}t^{3_k}\vartheta^4\kappa c_1^2\lambda_Q+a^3\frac{\hat{\beta}_0^2d}{\lambda_Q\varepsilon_7}t^{3_k}\vartheta^6+3a^3dt^{3_k}\vartheta^6\frac{\xi_1^{2-\alpha_1}}{\eps_3}\bigg )|D^3\uu|^2\\
&+a^3t^{3_k}\vartheta^6\big (3A_1\eps_3+\varepsilon_7\big )\lambda_Q|D^4\uu|^2.
\end{align*}

\paragraph{\bf Estimates of the terms $\bm{g_{i,i+2}}^{(3)}$.}
We start by considering the term $g_{0,2}^{(3)}$ and note that
\begin{align}
g_{0,2}^{(3)}\le a\sqrt{a}t^{2_k}\vartheta^4 d^2\gamma_2^{\tau_2}|\uu|^2+a^2\sqrt{a}t^{2_k}\vartheta^4\gamma_2^{2-\tau_2}|D^2\uu|^2.
\label{estim-g02}
\end{align}

%Concerning $g_{1,3}^{(3)}$, we use the estimates
Further, we can estimate
\begin{align*}
g_{1,3}^{(3)}\le & a^2\sqrt{a}t^{3_k}\vartheta^6\beta_3^{\rho_3} d^4|D^1\uu|^2+a^3\sqrt{a}t^{3_k}\vartheta^6\beta_3^{2-\rho_3}|D^3\uu|^2\\
&+a^2\sqrt{a}t^{3_k}\vartheta^6\hat{\beta}^{{\mu_3}}_3 d^4|D^1\uu|^2+a^3\sqrt{a}t^{3_k}\vartheta^6\hat{\beta}^{2-{\mu_3}}_3|D^3\uu|^2\\
&
+3a^2\sqrt{a}t^{3_k}\vartheta^6\gamma_2^{{\tau_2}} d^2|D^1\uu|^2+3a^3\sqrt{a}t^{3_k}\vartheta^6\gamma_2^{2-{\tau_2}}|D^3\uu|^2\\
=&
a^2\sqrt{a}t^{3_k}\vartheta^6d^2\big (\beta_3^{\rho_3} d^2+\hat{\beta}^{\mu_3}_3 d^2+3\gamma_2^{\tau_2}\big )|D^1\uu|^2+ a^3\sqrt{a}t^{3_k}\vartheta^6\big (\beta_3^{2-\rho_3}+\hat{\beta}^{2-{\mu_3}}_3+3\gamma_2^{2-{\tau_2}}\big )|D^3\uu|^2.
\end{align*}
% \begin{align*} 2a^3t^{3_k}\vartheta^6\sum_{h,l,r,s=1}^d\sum_{j=1}^m\partial_{hrs}b_{l}\partial_{l}u_j\partial_{hrs}u_j \leq a^2\sqrt{a}t^{3_k}\vartheta^6\beta_3^{\rho_3} d^4|D^1\uu|^2+a^3\sqrt{a}t^{3_k}\vartheta^6\beta_3^{2-\rho_3}|D^3\uu|^2;
% \end{align*}
% \begin{align*}
% 2a^3t^{3_k}\vartheta^6\sum_{j=1}^m\sum_{h,l,r,s=1}^d\sum_{\substack{i=1\\i\neq j}}^m\partial_{hrs}(\hat{B}_l)_{ji}\partial_{l}u_i\partial_{hrs}u_j \leq a^2\sqrt{a}t^{3_k}\vartheta^6\hat{\beta}^{{\mu_3}}_3 d^4|D^1\uu|^2+a^3\sqrt{a}t^{3_k}\vartheta^6\hat{\beta}^{2-{\mu_3}}_3|D^3\uu|^2
% \end{align*}
% and
% \begin{align*}
% 6a^3t^{3_k}\vartheta^6\sum_{i,j=1}^m\sum_{h,r,s=1}^d\partial_{hs}c_{ji}\partial_{r}u_i\partial_{hrs}u_j \leq 3a^2\sqrt{a}t^{3_k}\vartheta^6\gamma_2^{{\tau_2}} d^2|D^1\uu|^2+3a^3\sqrt{a}t^{3_k}\vartheta^6\gamma_2^{2-{\tau_2}}|D^3\uu|^2
% \end{align*}
% to infer that
% \begin{align*}
% g_{1,3}^{(3)} \le 
% a^2\sqrt{a}t^{3_k}\vartheta^6d^2\big (\beta_3^{\rho_3} d^2+\hat{\beta}^{\mu_3}_3 d^2+3\gamma_2^{\tau_2}\big )|D^1\uu|^2+ a^3\sqrt{a}t^{3_k}\vartheta^6\big (\beta_3^{2-\rho_3}+\hat{\beta}^{2-{\mu_3}}_3+3\gamma_2^{2-{\tau_2}}\big )|D^3\uu|^2.
% \end{align*}

Finally, it holds that
\begin{align*}
g_{0,3}^{(3)}\leq a^2\sqrt{a}t^{3_k}\vartheta^6d^3\gamma_3^{\tau_3} |\uu|^2+a^3\sqrt{a}t^{3_k}\vartheta^6\gamma_3^{2-{\tau_3}}|D^3\uu|^2.
\end{align*}

Collecting all the terms, using \eqref{stup_estim}, estimating $\vartheta$ and $t$ with $1$, when the dependences on $\vartheta$  or on $t$ are not really needed, and taking $a<\frac{1}{64}$ to get
\begin{align*}
&(1-2\sqrt{a})\sum_{h=1}^d\mathfrak{q}_0(\partial_h\uu)
\ge \frac{3}{4}\lambda_Q|D^2\uu|^2,\qquad(1-4\sqrt{a})\sum_{h,r=1}^d\mathfrak{q}_0(\partial_{hr}\uu)
\ge \frac{1}{2}\lambda_Q|D^3\uu|^2,\\[1mm]
&(1-4\sqrt{a})\sum_{h,r,s=1}^d\mathfrak{q}_0(\partial_{hrs}\uu)\ge \frac{1}{2}\lambda_Q|D^4\uu|^2,
\end{align*}
in $(0,1]\times B(0,n)$, we obtain that $G^{(3)}\le\sum_{k=0}^4\mathcal{F}_k^{(3)}|D^k\uu|^2$, where
\begin{align*}
\mathcal{F}_0^{(3)}=&(2-\nu)\Lambda_C+\frac{\hat{\beta}_0^2d}{\lambda_Q
\varepsilon_4}+\sqrt{a}d\gamma_1^{\tau_1}+a\sqrt{a}d^2\gamma_2^{\tau_2} +a^2\sqrt{a}d^3\gamma_3^{\tau_3} ;\\[3mm]
\mathcal{F}_1^{(3)}=&
-\big[2-\varepsilon_4-\sqrt{a}(32\kappa c_1^2+dA_2)-2ac_2\big ]\lambda_Q+1_ka\\
&+at^{1_k}\vartheta^2\bigg [2\Lambda_{D^1{\bm b}}+(2-\nu)\Lambda_C
+d\bigg (\frac{\xi_1^{2-\alpha_1}}{\eps_1}
+\frac{\widehat\beta_0^2}{\lambda_Q\varepsilon_5}\bigg )\\
&\qquad\qquad\;
+\sqrt{a}\big (d^3\beta_2^{\rho_2}+d\hat{\beta}_1^{2-{\mu_1}}+d^3\hat{\beta}_2^{{{\mu_2}}} +(2d+1)\gamma_1^{2-\tau_1} \big )+a\sqrt{a}d^2\big (\beta_3^{\rho_3} d^2+\hat{\beta}^{{\mu_3}}_3 d^2+3\gamma_2^{{\tau_2}}\big )\bigg ];\\[3mm]
\mathcal{F}_2^{(3)}=&
at^{1_k}\vartheta^2\bigg [-\bigg (\frac{3}{2}-\varepsilon_5-\varepsilon_1A_1-\sqrt{a}d(2A_2+A_3)-64\sqrt{a}\kappa c_1^2-4ac_2\bigg )\lambda_Q+
2_ka\bigg ]\\
&+a^2t^{2_k}\vartheta^4\bigg [4\Lambda_{D^1{\bm b}}+(2-\nu)\Lambda_C+d\bigg (2\frac{\xi_1^{2-\alpha_1}}{\eps_2}
+\frac{\widehat\beta_0^2}{\lambda_Q\varepsilon_6}\bigg )\\
&\qquad\qquad\quad+\sqrt{a}\big (d\xi_2^{2-{\alpha_2}}
+d^3\xi_3^{\alpha_3}
+(3d^3+1)\beta_2^{2-{\rho_2}}
+2\hat{\beta}_1^{2-{\mu_1}}\\
&\qquad\qquad\qquad\qquad
+(3d^3+1)\hat{\beta}_2^{2-{{\mu_2}}}+(3d+2)\gamma_1^{\tau_1}+\gamma_2^{2-{\tau_2}}\big )\bigg ];\\[3mm]
\mathcal{F}_3^{(3)}=&
a^2t^{2_k}\vartheta^4\big [-\big (1-\varepsilon_6-144
\sqrt{a}\kappa c_1^2-
6ac_2-2\eps_2A_1-3\sqrt{a}d(A_2+A_3)\big )\lambda_Q+3_ka
\big ]\\
&+a^3t^{3_k}\vartheta^6\bigg [6\Lambda_{D^1{\bm b}}+(2-\nu)\Lambda_C+d\bigg (3\frac{\xi_1^{2-\alpha_1}}{\eps_3}
+\frac{\widehat\beta_0^2}{\lambda_Q\varepsilon_7}\bigg )\\
&\qquad\qquad\quad
+\sqrt{a}\big (3d\xi_2^{2-\alpha_2}+\xi_3^{2-\alpha_3}+ 3\beta_2^{\rho_2}+\beta_3^{2-\rho_3}+3d\hat{\beta}_1^{2-\mu_1}+3\hat{\beta}^{\mu_2}_2+\hat{\beta}^{2-\mu_3}_3\\
&\qquad\qquad\qquad\qquad\;
+3\gamma_1^{2-{\tau_1}}+3\gamma_2^{2-\tau_2}+\gamma_3^{2-\tau_3}\big )\bigg ];\\[3mm]
\mathcal{F}_4^{(3)}=&-a^3t^{3_k}\vartheta^6\big (1-
3\eps_3A_1-\varepsilon_7)\lambda_Q.
\end{align*}

To go further, we denote by ${\mathcal F}_{i,1}^{(3)}$ the first line of ${\mathcal F}_i^{(3)}$ ($i=1,2,3$) and set ${\mathcal F}_{i,2}^{(3)}=
{\mathcal F}_i^{3}-{\mathcal F}_{i,1}^{(3)}$ for the same values of $i$. Hence, to bound $G^{(3)}$ from above by a constant times $v^{(3)}$, it suffices to fix $a<\frac{1}{64}$, $\varepsilon_j$ ($j=1,\ldots,7)$ such that
\begin{equation}
{\mathcal F}_{1,1}^{(3)}\le 0,\qquad\;\, {\mathcal F}_{2,1}^{(3)}\le 0,\qquad\;\,{\mathcal F}_{3,1}^{(3)}\le 0,\qquad\;\, {\mathcal F}_4^{(3)}\le 0
\label{cond-fin-1-bis1}
\end{equation} 
% \begin{equation}
% \left\{
% \begin{array}{ll}
% \big[1-\sqrt{a}(32\kappa c_1^2+d\widehat\beta_0\lambda_0^{\sigma-1}+dA_2)-2ac_2\big ]\lambda_Q-1_ka\ge 0;\\[2mm]
% \big [1-\sqrt{a}\big (64\kappa c_1^2
% (1+d)\widehat\beta_0\lambda_0^{\sigma-1}+d(A_3+2A_2)\big )-4ac_2-\varepsilon_1A_1
% \big ]\lambda_Q-2_ka\ge 0;\\[2mm]
% \big [1-
% \sqrt{a}\big (144\kappa c_1^2+(1+d)\hat{\beta}_0\lambda_0^{\sigma-1}+3d(A_2+A_3)\big )-6ac_2-2\eps_2A_1\big ]\lambda_Q-3_ka\ge 0;\\[2mm]
% 1-\sqrt{a}\widehat\beta_0\lambda_0^{\sigma-1}-3\varepsilon_3A_1\ge 0
% \end{array}
% \right.
% \label{cond-fin-1-bis1}
% \end{equation}
on $(0,1]\times B(0,n)$ 
and, at the same time, ${\mathcal F}_0^{(3)}$, ${\mathcal F}_{1,2}^{(3)}$,  ${\mathcal F}_{2,2}^{(3)}$ and ${\mathcal F}_{3,2}^{(3)}$ are all bounded from above on $(0,1]\times B(0,n)$.
% \begin{equation}
% \left\{
% \begin{array}{ll}
% \displaystyle 
% (2-\textcolor{red}{\nu})\Lambda_C+\hat{\beta}_0^2d\lambda_Q^{2\sigma-1}+\sqrt{a}
% d\gamma_1^{\tau_1} +a\sqrt{a}d^2\gamma_2^{\tau_2} +a^2\sqrt{a}d^3\gamma_3^{\tau_3};\\[2mm]
% \displaystyle
% 2\Lambda_{D^1{\bm b}}+(2-\textcolor{red}{\nu})\Lambda_C
% +\frac{d\xi_1^{2-\alpha_1}}{\eps_1}
% +\sqrt{a}d\hat{\beta}_1^{2-{\mu_1}}+\sqrt{a}(1+2d)\gamma_1^{2-\tau_1}+\sqrt{a}d^3\big (\beta_2^{\rho_2}+\hat{\beta}_2^{\mu_2} \big )\\
% \displaystyle
% +a\sqrt{a}d^2\big (\beta_3^{\rho_3} d^2+\hat{\beta}^{{\mu_3}}_3 d^2+3\gamma_2^{{\tau_2}}\big );\\[3mm]
% \displaystyle
% 4\Lambda_{D^1{\bm b}}+(2-\textcolor{red}{\nu})\Lambda_C+2\frac{d\xi_1^{2-\alpha_1}}{\eps_2}+\sqrt{a}\big (d\xi_2^{2-{\alpha_2}}+\beta_2^{2-{\rho_2}}
% +2\hat{\beta}_1^{2-{\mu_1}}
% +\hat{\beta}_2^{2-{{\mu_2}}}+2\gamma_1^{\tau_1}+\gamma_2^{2-{\tau_2}}\big )\\
% \displaystyle
% +\sqrt{a}d^2\big (d\xi_3^{\alpha_3} +3d\beta_2^{2-\rho_2}
% +3d\hat{\beta}^{2-\mu_2}_2 +3\gamma_1^{{\tau_1}}\big );\\[3mm]
% \displaystyle
% 6\Lambda_{D^1{\bm b}}+\textcolor{red}{(2-\nu)}\Lambda_C+3\frac{d\xi_1^{2-\alpha_1}}{\eps_3}\\
% +\sqrt{a}\big (3d\xi_2^{2-\alpha_2}+\xi_3^{2-\alpha_3}+ 3\beta_2^{\rho_2}+\beta_3^{2-\rho_3}+3d\hat{\beta}_1^{2-\mu_1}
% +3\hat{\beta}^{\mu_2}_2
% +\hat{\beta}^{2-\mu_3}_3+3\gamma_1^{2-{\tau_1}}+3\gamma_2^{2-\tau_2}+\gamma_3^{2-\tau_3}\big )
% \end{array}
% \right.
% \label{cond-fin-21}
% \end{equation}
% bounded over $(0,1]\times B(0,n)$. 

To make the terms in \eqref{cond-fin-1-bis1} nonpositive, it suffices to choose $\varepsilon_j$ such that
\begin{eqnarray*}
2-\varepsilon_4>0,\qquad\;\,\frac{3}{2}-\varepsilon_5-\varepsilon_1A_1>0,\qquad\;\,1-\varepsilon_6-2\varepsilon_2A_1>0,\qquad\;\,1-3\varepsilon_3A_1-\varepsilon_7=0
\end{eqnarray*}
and, then, choose $a$ sufficiently small.  
We fix $\varepsilon_4=\frac{d}{M_0}$ and 
$\varepsilon_5$, $\varepsilon_6$, $\varepsilon_7$ positive and subject to the conditions 
$\varepsilon_{4+j}+j\varepsilon_jA_1=\frac{d}{2jM_j}$ $(j=1,2)$ and $\varepsilon_7+3\varepsilon_3A_1=1$ in order to minimize the functions
$\zeta_j(\varepsilon_j,\varepsilon_{4+j})=j\frac{\xi^{2-\alpha_1}}{\varepsilon_j}+\frac{\widehat\beta_0^2}{\lambda_Q\varepsilon_{4+j}}$ $(j=1,2)$ and
$\zeta_3(\varepsilon_3,\varepsilon_7)=3\frac{\xi^{2-\alpha_1}}{\varepsilon_3}+\frac{\widehat\beta_0^2}{\lambda_Q\varepsilon_7}$. 
The minimum of the functions $\zeta_j$ ($j=1,2$) subject to the previous condition is
$\frac{2jM_j}{d}\left (j\sqrt{A_1}\xi_1^{1-\frac{\alpha_1}{2}}+\widehat\beta_0\lambda_Q^{-\frac{1}{2}}\right )^2$ and the minimum of $\zeta_3$ with the constraint $1-3\varepsilon_3A_1-\varepsilon_7=0$ is
$\left (3\sqrt{A_1}\xi_1^{1-\frac{\alpha_1}{2}}+\widehat\beta_0\lambda_Q^{-\frac{1}{2}}\right )^2$.
Due to Hypothesis \ref{hyp-derivata-3}(ii), we conclude that the functions $\mathcal{F}_0^{(3)}$, $\mathcal{F}_{1,2}^{(3)}$, $\mathcal{F}_{2,2}^{(3)}$ and
$\mathcal{F}_{3,2}^{(3)}$ are bounded from above in $(0,1]\times B(0,n)$ by a constant $c_3$, independent of $n$, up to taking a smaller value of $a$ if needed.

Summing up, we have proved that the function $v^{(3)}$ is a solution of the problem
\begin{eqnarray*}
\left\{
\begin{array}{ll}
(D_tw)(t,x)-(\A_{\nu}w)(t,x)\le c_3w(t,x), & (t,x)\in (0,1]\times B(0,n),\\[1mm]
w(t,x)=0, & (t,x)\in (0,1]\times\partial B(0,n),\\[1mm]
w(0,x)=\sum_{j=0}^k|D^j\f(x)|^2, & x\in B(0,n)
\end{array}
\right.
\end{eqnarray*}
and the maximum principle allows us to conclude that
$v^{(3)}(t,\cdot)\le e^{c_3t}S_{\nu}(t)\sum_{j=0}^k|D^j\f|^2$ in $B(0,n)$. 
Letting $n$ approach $\infty$, we get 
\begin{equation}
\sum_{h=0}^3t^{h_k}|D^h\uu(t,x)|^2 \le e^{c_3t}S_{\nu}(t)\sum_{j=0}^k|D^j\f|^2,\qquad\;\,t\in (0,1],
\label{alain}
\end{equation}
for any $k \in \{0,1,2,3\}$, whence the claim in this case.
For a general $\f\in C_b(\R^d;\R^m)$, we consider a bounded (with respect to the norm of $C^k_b(\R^d;\R^m)$) sequence $(\f_n)_{n\in\N}\subset C^3_c(\R^d)$, which converges to $\f$ in $C^k(\Omega;\R^m)$ for every bounded open set $\Omega\subset\R^d$. By Remark \ref{rmk-1}, the sequence $(\bm{T}(t
)\f_n)_{n\in\N}$ converges to $\bm{T}(t
)\f$ in $C^3(\Omega;\R^m)$ for every $t\in (0,\infty)$ and every $\Omega$ as above. Moreover, Proposition \ref{prop-2.5} guarantees that, for every $t\in (0,\infty)$, the sequence 
$(S_{\nu}(t)\sum_{j=0}^k|D^j\f_n|^2)_{n\in\N}$ converges to $S_{\nu}(t)\sum_{j=0}^k|D^j\f|^2$ locally uniformly in $\R^d$ as $n$ tends to $\infty$. Writing \eqref{alain}, with $\f$ being replaced by $\f_n$ and letting $n$ approach $\infty$, estimate \eqref{alain} follows in its full generality.

To prove \eqref{pointwise}, under Hypotheses \ref{hyp-derivata-2}, with $k,\ell\in\{0,1,2\}$ we consider the function $v_{k,n}^{(2)}$ defined by
\begin{align*}
v_{k,n}^{(2)}(t,x):=&|\uu_n(t,x)|^2+at^{1_k}(\vartheta_n(x))^2|D^1\uu_n(t,x)|^2+a^2t^{2_k}(\vartheta_n(x))^4|D^2\uu_n(t,x)|^2
\end{align*}
for every $(t,x)\in [0,\infty)\times B(0,n)$.
It turns out that 
$D_tv_{k,n}^{(2)}-{\mathcal A}_{\nu}v_{k,n}^{(2)}=\sum_{i=0}^2 g_{i,i}^{(3)}+
g_{3,3}^{(2)}+
\sum_{i=0}^1g_{i,i+1}^{(3)}+g_{0,2}^{(3)}+g_{2,3}^{(2)}=:G^{(2)}$, where $g_{3,3}^{(2)}(t,\cdot)= -2a^2t^{2_k}\vartheta^4\sum_{h,r=1}^d\mathfrak{q}_0(\partial_{hr}\uu)$ and
$g_{2,3}^{(2)}$ consists of the first three terms in $g_{2,3}^{(3)}$.
Using \eqref{g23-1}-\eqref{g23-3}, we get
\begin{align*}
g_{2,3}^{(2)}\leq& 8a^2\sqrt{a}t^{2_k}\vartheta^4\sum_{h,r=1}^d\mathfrak{q}_0(\partial_{hr}\uu)
+\bigg(2a^2\vartheta^2\frac{d\xi_1^{2-\alpha_1}}{\eps_2}+a^2\vartheta^2\frac{\widehat\beta_0^2d}{\lambda_Q\varepsilon_6}+64a\sqrt{a}\kappa c_1^2\lambda_Q\bigg)t^{2_k}\vartheta^2|D^2\uu|^2\\
&+a^2\big(2\eps_2A_1+\varepsilon_6\big)t^{2_k}\vartheta^4\lambda_Q|D^3\uu|^2.
\end{align*}

Taking also \eqref{estim-g11}-\eqref{estim-g22}, \eqref{estim-g01}, \eqref{estim-g12}, \eqref{estim-g02} into account and fixing $a$ such that $8\sqrt{a}<1$, we conclude that $G^{(2)}\le \sum_{k=0}^3\mathcal{F}_k^{(2)}|D^k\uu|^2$, where
\begin{align*}
\mathcal{F}_0^{(2)}=&(2-\nu)\Lambda_C+\frac{\hat{\beta}_0^2d}{\lambda_Q\varepsilon_4}
+\sqrt{a}dt^{1_k}(\gamma_1^{\tau_1}+ad\gamma_2^{\tau_2});\\[2mm]
\mathcal{F}_1^{(2)}=&
-\big[2-\varepsilon_4-\sqrt{a}(32\kappa c_1^2+dA_2) -2ac_2\big ]\lambda_Q
+1_ka\\
&+at^{1_k}\vartheta^2\bigg[2\Lambda_{D^1{\bm b}}+(2-\nu)\Lambda_C
\!+\!d\bigg (\frac{\xi_1^{2-\alpha_1}}{\eps_1}+\frac{\widehat\beta_0^2}{\lambda_Q\eps_5}\bigg )
\!+\!\sqrt{a}(d^3\beta_2^{\rho_2}+d\hat{\beta}_1^{2-{\mu_1}}\\
&\phantom{+at^{1_k}\vartheta^2\bigg[\;\,}
+d^3\hat{\beta}_2^{{{\mu_2}}} +(2d+1)\gamma_1^{2-\tau_1})\bigg ];\\[2mm]
\mathcal{F}_2^{(2)}=&
-at^{1_k}\vartheta^2\bigg \{\bigg [\frac{3}{2}-\varepsilon_5-\varepsilon_1A_1-\sqrt{a}(64\kappa c_1^2+d(2A_2+A_3))-4ac_2\bigg ]\lambda_Q-
2_ka
\bigg \}\\
&+a^2t^{2_k}\vartheta^4\bigg [4\Lambda_{D^1{\bm b}}+(2-\nu)\Lambda_C+d\bigg (2\frac{\xi_1^{2-\alpha_1}}{\eps_2}
+\frac{\widehat\beta_0^2}{\lambda_Q\varepsilon_6}\bigg )\\
&\qquad\qquad\quad\;\,+\sqrt{a}\big (d\xi_2^{2-{\alpha_2}}+\beta_2^{2-{\rho_2}}
+2\hat{\beta}_1^{2-{\mu_1}}
+\hat{\beta}_2^{2-{{\mu_2}}}+2\gamma_1^{\tau_1}+\gamma_2^{2-{\tau_2}}\big )\bigg ];\\[2mm]
\mathcal{F}_3^{(2)}=&
-a^2t^{2_k}\vartheta^4\big (1-\varepsilon_6-
2\eps_2A_1\big )\lambda_Q.
\end{align*}

Denoting by ${\mathcal F}_{1,1}^{(2)}$ and ${\mathcal F}_{2,1}^{(2)}$ the first line of ${\mathcal F}_1^{(2)}$
and ${\mathcal F}_2^{(2)}$, respectively, and
then setting ${\mathcal F}_{1,2}^{(2)}={\mathcal F}_1^{(2)}-{\mathcal F}_{1,1}^{(2)}$ and ${\mathcal F}_{2,2}^{(2)}={\mathcal F}_2^{(2)}-{\mathcal F}_{2,1}^{(2)}$, it turns out that the conditions which we need to assume are
\begin{equation}
\mathcal{F}_{1,1}^{(2)}\le 0,\qquad\;\,
\mathcal{F}_{2,1}^{(2)}\le 0,\qquad\;\,
\mathcal{F}_3^{(2)}\le 0
\label{cond-fin-1-bis}
\end{equation}
% The conditions to assume are
% \begin{equation}
% \left\{
% \begin{array}{ll}
% \big[1-\sqrt{a}(32
% \kappa c_1^2+d\widehat\beta_0\lambda_0^{\sigma-1}+dA_2)-2ac_2\big ]\lambda_Q
% -1_ka\ge 0,\\[2mm]
% \big [1
% -\sqrt{a}\big (64\kappa c_1^2
% +(1+d)\widehat\beta_0\lambda_0^{\sigma-1}+d(A_3+2A_2)\big )-4ac_2-\varepsilon_1A_1
% \big ]\lambda_Q-
% 2_ka\ge 0;\\[2mm]
% 1-\sqrt{a}\hat{\beta}_0\lambda_0^{\sigma-1}+2\eps_2A_1 \ge 0;
% \end{array}
% \right.
% \label{cond-fin-1-bis}
% \end{equation}
in $(0,1]\times B(0,n)$ 
and ${\mathcal F}^{(2)}_0$, $\mathcal{F}_{1,2}^{(2)}$ and
$\mathcal{F}_{2,2}^{(2)}$
% \begin{equation}
% \left\{
% \begin{array}{ll}
% \displaystyle 
% \textcolor{red}{(2-\nu)}\Lambda_C
% +\hat{\beta}_0^2d\lambda_Q^{2\sigma-1}+\sqrt{a}d\gamma_1^{\tau_1} +a\sqrt{a}d^2\gamma_2^{\tau_2};\\[2mm]
% \displaystyle
% 2\Lambda_{D^1{\bm b}}+\textcolor{red}{(2-\nu)}\Lambda_C
% +\frac{d\xi_1^{2-\alpha_1}}{\eps_1}
% +\sqrt{a}d\hat{\beta}_1^{2-{\mu_1}}+\sqrt{a}\gamma_1^{2-\tau_1}+\sqrt{a}d\big (\beta_2^{\rho_2}d^2+\hat{\beta}_2^{{{\mu_2}}} d^2+2\gamma_1^{2-\tau_1}\big );\\[3mm]
% \displaystyle
% 4\Lambda_{D^1{\bm b}}+\textcolor{red}{(2-\nu)}\Lambda_C+2\frac{d\xi_1^{2-\alpha_1}}{\eps_2}+\sqrt{a}\big (d\xi_2^{2-{\alpha_2}}+\beta_2^{2-{\rho_2}}
% +2\hat{\beta}_1^{2-{\mu_1}}
% +\hat{\beta}_2^{2-{{\mu_2}}}+2\gamma_1^{\tau_1}+\gamma_2^{2-{\tau_2}}\big )
% \end{array}
% \right.
% \label{cond-fin-2}
% \end{equation}
bounded over $(0,1]\times B(0,n)$. 
If we fix
$\varepsilon_4$, $\varepsilon_5$ and $\varepsilon_6$ positive and such that $\varepsilon_4=\frac{d}{M_0}$, $\varepsilon_5+
\varepsilon_1A_1=\frac{d}{2M_1}$ and $\varepsilon_6+2\varepsilon_2A_1=1$, then we can fix $a$ sufficiently small such that
the conditions in \eqref{cond-fin-1-bis} are satisfied.
and the functions ${\mathcal F}^{(2)}_0$, $\mathcal{F}_{1,2}^{(2)}$ and
$\mathcal{F}_{2,2}^{(2)}$ are all bounded from above in $(0,1]\times B(0,n)$ by a constant independent of $n$, due to Hypothesis \ref{hyp-derivata-2}(iii), up to taking a smaller value of $a$, if needed. Now, we can complete the proof as in the case $k,\ell\in\{0,1,2,3\}$.

Finally, to prove \eqref{aim}
under Hypotheses \ref{hyp-derivata-1} (i.e., in the case when $k,\ell\in\{0,1\}$), we need to apply the arguments in the first part of the proof to the function
$v_{1,n}^{(1)}$, defined by
\begin{align*}
v_{k,n}^{(1)}(t,x)=&|\uu_n(t,x)|^2+at^{1_k}(\vartheta_n(x))^2|D^1\uu_n(t,x)|^2,\qquad\;\,
(t,x)\in [0,\infty)\times B(0,n),
\end{align*}
In this case, 
$D_tv_{k,n}^{(1)}-\A_{\nu}v_{k,n}^{(1)}=
g_{0,0}^{(3)}+g_{1,1}^{(3)}+g_{2,2}^{(1)}+g_{0,1}^{(3)}+g_{1,2}^{(1)}=:G^{(1)}$, where  $g_{1,2}^{(1)}$ consists of the first three terms of $g_{1,2}^{(3)}$ and  $g_{2,2}^{(1)}=-2at^{1_k}\vartheta^2\sum_{h=1}^d\mathfrak{q}_0(\partial_h\uu)$.
Using \eqref{num-0} and \eqref{num}, we can infer that
\begin{align*}
g_{1,2}\le &4a\sqrt{a}t^{1_k}\vartheta^2\sum_{h=1}^d\mathfrak{q}_0(\partial_h\uu)+t^{1_k}\bigg [a\vartheta^2\bigg (\frac{d\xi_1^{2-\alpha_1}}{\eps_1}+\frac{\widehat\beta_0^2d}{\eps_5}\bigg )+32\sqrt{a}\kappa c_1^2\lambda_Q\bigg ]|D^1\uu|^2\\
&+at^{1_k}\vartheta^2(\eps_1A_1+\varepsilon_5)\lambda_Q|D^2\uu|^2.
\end{align*}

Hence, this estimate together with \eqref{estim-g11}, \eqref{estim-g01} implies that $G^{(1)}
\le\sum_{k=0}^2\mathcal{F}_k^{(1)}|D^k\uu|^2$ for every $a\in\left (0,\frac{1}{16}\right )$, where
\begin{align*}
\mathcal{F}_0^{(1)}=&(2-\nu)\Lambda_C+\frac{\hat{\beta}_0^2d}{\lambda_Q\varepsilon_4} +\sqrt{a}d\gamma_1^{\tau_1};\\[2mm]
\mathcal{F}_1^{(1)}=&
-\big[2-\varepsilon_4-\sqrt{a}(32\kappa c_1^2+
A_2d)
-2ac_2\big ]\lambda_Q
+1_ka\\
&+at^{1_k}\vartheta^2\bigg[2\Lambda_{D^1{\bm b}}+(2-\nu)\Lambda_C
+d\bigg (\frac{\xi_1^{2-\alpha_1}}{\eps_1}
+\frac{\widehat\beta_0^2}{\lambda_Q\varepsilon_5}\bigg )
+\sqrt{a}(d\hat{\beta}_1^{2-{\mu_1}}+\gamma_1^{2-\tau_1})
\bigg ];\\[2mm]
\mathcal{F}_2^{(1)}=&
-at^{1_k}\vartheta^2\big (1-\varepsilon_5-
\eps_1A_1\big )\lambda_Q.
\end{align*}

Due to Hypothesis \ref{hyp-derivata-1}(ii), we can choose $\varepsilon_4=\frac{d}{M_0}$ and
minimize the function 
$\tilde{\zeta}(\eps_1,\eps_5)=\frac{\xi_1^{2-\alpha_1}}{\varepsilon_1}+\frac{\widehat\beta_0^2}{\lambda_Q\varepsilon_5}$,
subject to the condition $\varepsilon_5+A_1\varepsilon_1=1$ in order to make the functions $\mathcal{F}_0^{(1)}$, $\mathcal{F}_1^{(1)}$ and
$\mathcal{F}_2^{(1)}$ bounded from above in $(0,1]\times B(0,n)$ by a constant, independent of $n$, up to taking a smaller value of $a$, if needed.
Now, the proof can be completed as in the case $k,\ell\in\{0,1,2,3\}$.
\end{proof}

Estimate \eqref{pointwise} and the $L^\infty$-contractivity of the semigroup $\{S_\nu(t)\}_{t\ge 0}$ imply the following uniform estimates for ${\bm T}(t)\f$ and its derivatives 
when $\f$ is smooth enough.

\begin{cor}
If Hypotheses $\ref{hyp-derivata-1}$ are satisfied then, for any $\f \in C_b^k(\Rd;\Rm)$, $\bm{T}(t)\f$ belongs to $C_b^{\ell}(\Rd;\Rm)$ for any $k,\ell\in \{0,1\}$ with $k \le \ell$. Moreover for any $\varepsilon>0$ and $k,\ell$ as above
\begin{equation}\label{aim}
\|\bm{T}(t)\f\|_{C_b^{\ell}(\Rd;\Rm)}\le c_{\varepsilon}e^{(H_\nu+\eps)t}t^{-\frac{\ell-k}{2}}\|\f\|_{C_b^{k}(\Rd;\Rm)}, \qquad\;\, t\in (0,\infty),
\end{equation}
where $H_\nu$ is the constants appearing in \eqref{norm_infty} and $c_{\varepsilon}$ is a positive constant, independent of $t$ and $\f$, that blows up as $\varepsilon$ tends to $0$.
Further, if Hypotheses $\ref{hyp-derivata-2}$ $($resp. Hypotheses $\ref{hyp-derivata-3})$ are satisfied then $\bm{T}(t)\f$ belongs to $C_b^{\ell}(\Rd;\Rm)$ with $\ell,k \in \{0,1,2\}$ $($resp. $\ell,k \in \{0,1,2,3\})$ with $k\leq\ell$ and estimate \eqref{aim} is satisfied for such values of $\ell$ and $k$.
\end{cor}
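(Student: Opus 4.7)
The plan is to deduce the corollary from the pointwise estimate \eqref{pointwise} combined with the $L^\infty$-bounds of Proposition~\ref{prop-2.5} and Theorem~\ref{exi_thm}. The qualitative statement $\bm T(t)\f\in C_b^\ell(\R^d;\R^m)$ is already implicit in \eqref{pointwise}: its right-hand side is bounded, so every derivative $D^j\bm T(t)\f$ with $0\le j\le\ell$ is bounded pointwise, and continuity of these derivatives is guaranteed by the smoothing recorded in Remark~\ref{rmk-1}. The substantive content is therefore the quantitative bound \eqref{aim}.

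I would first treat $t\in(0,1]$. For each $0\le j\le\ell$ I would apply \eqref{pointwise} with the admissible pair of indices $(j,\min\{j,k\})$, which keeps the right-hand side controlled by $\|\f\|_{C_b^k(\R^d;\R^m)}$. Taking the supremum in $x$, using the contractivity bound $\|S_\nu(t)g\|_{C_b(\R^d)}\le e^{\Theta_C t}\|g\|_{C_b(\R^d)}$ from Proposition~\ref{prop-2.5}, and a square root give
\begin{align*}
\|D^j\bm T(t)\f\|_{C_b(\R^d;\R^m)}\le C\,e^{(H+\Theta_C)t/2}\bigl(\max\{t^{-(j-k)},1\}\bigr)^{1/2}\|\f\|_{C_b^k(\R^d;\R^m)},
\end{align*}
where $j-k$ is to be read as $0$ when $j<k$. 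On $(0,1]$ the exponential factor is uniformly bounded, and summing in $j$ the dominant power of $t$ arises at $j=\ell$ and equals $t^{-(\ell-k)/2}$. This yields \eqref{aim} on $(0,1]$ with an $\varepsilon$-independent constant.

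For $t>1$ I would exploit the semigroup law and write $\bm T(t)\f=\bm T(1)\bm T(t-1)\f$. Applying the just-proved $t=1$ bound to $\bm T(t-1)\f$ in place of $\f$ reduces the problem to estimating $\|\bm T(t-1)\f\|_{C_b^k(\R^d;\R^m)}$. For $k=0$ this is precisely \eqref{norm_infty}, producing the factor $e^{H_\nu(t-1)}\le C\,e^{H_\nu t}$. For $k\ge 1$ the required bound is the $\ell=k$ case of the corollary itself, which I would establish by induction on $k$: the base $k=0$ is \eqref{norm_infty}, and the inductive step combines the previous paragraph with the present splitting specialized to $\ell=k$, so no circularity arises.

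Finally, to produce the stated form $c_\varepsilon e^{(H_\nu+\varepsilon)t}t^{-(\ell-k)/2}\|\f\|_{C_b^k(\R^d;\R^m)}$ uniformly in $t\in(0,\infty)$, I would write $1=t^{(\ell-k)/2}\cdot t^{-(\ell-k)/2}$ for $t>1$ and absorb $t^{(\ell-k)/2}$ into $c_\varepsilon e^{\varepsilon t}$ via the elementary inequality $t^{(\ell-k)/2}\le c_\varepsilon e^{\varepsilon t}$, with $c_\varepsilon\to\infty$ as $\varepsilon\to 0^+$. The only conceptual point worth noting is that the pointwise estimate alone delivers the crude exponent $(H+\Theta_C)/2$, which is unsuitable for large times; it is precisely the semigroup splitting that replaces it by the sharper exponent $H_\nu$ from Theorem~\ref{exi_thm}, at the cost of an arbitrarily small $\varepsilon$ used to convert the polynomial correction $t^{(\ell-k)/2}$ into an exponential. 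The main care required is this bookkeeping of exponents together with the inductive structure on $k$; no individual step is difficult.
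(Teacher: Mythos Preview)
Your proposal is correct and fleshes out the paper's one-line indication that the corollary follows from \eqref{pointwise} together with the $L^\infty$-bound on $\{S_\nu(t)\}_{t\ge 0}$. You correctly recognize that the direct pointwise-to-sup passage yields only the cruder exponent $(H+\Theta_C)/2$, and that the semigroup splitting $\bm{T}(t)=\bm{T}(1)\bm{T}(t-1)$ together with \eqref{norm_infty} is what recovers $H_\nu$ for large $t$; the $\varepsilon$-trick to absorb $t^{(\ell-k)/2}$ is also the right mechanism.

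One simplification: the induction on $k$ is unnecessary (and as worded is slightly ambiguous, since ``the present splitting specialized to $\ell=k$'' read literally still leaves you needing $\|\bm{T}(t-1)\f\|_{C_b^k}$, which is the conclusion rather than the hypothesis). You can bypass it entirely: for $t>1$ use $\bm{T}(1)$ as a map $C_b(\R^d;\R^m)\to C_b^\ell(\R^d;\R^m)$ (your small-time bound with $k=0$), so that
\[
\|\bm{T}(t)\f\|_{C_b^\ell}\le C\,\|\bm{T}(t-1)\f\|_{C_b}\le C\,e^{H_\nu(t-1)}\|\f\|_{C_b}\le C'\,e^{H_\nu t}\|\f\|_{C_b^k},
\]
after which the $\varepsilon$-absorption of $t^{(\ell-k)/2}$ finishes the job for every admissible $k$ at once.
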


\section{Optimal Schauder estimates}
\label{sect-4}

The following interpolation result will be useful throughout this section.

\begin{thm}\label{thm-1-2}
Assume that Hypotheses $\ref{hyp-derivata-1}$ are satisfied. For any $t>0$, ${\bm T}(t)$ is a bounded linear operator from $\mathcal{C}^{\theta_1}_b(\Rd;\Rm)$ to $\mathcal{C}^{\theta_2}_b(\Rd;\Rm)$ for any $0 \le \theta_1\le \theta_2\le 1$ and for any $\varepsilon>0$ 
\begin{equation}
\label{teta1teta2infty}
\|{\bm T}(t)\f\|_{\mathcal{C}^{\theta_2}_b(\Rd;\Rm)}\le c_{\varepsilon}e^{(H_\nu+\varepsilon)t}t^{-\frac{\theta_2-\theta_1}{2}}\|\f\|_{\mathcal{C}^{\theta_1}_b(\Rd;\Rm)},
\qquad\;\,t\in (0,\infty),
\end{equation}
where $H_\nu$ is the constant appearing in \eqref{norm_infty} and $c_{\varepsilon}$ is a positive constant blowing up as $\varepsilon$ tends to $0$. 

 If Hypotheses $\ref{hyp-derivata-1}$ are replaced by Hypotheses $\ref{hyp-derivata-2}$ $($resp. Hypotheses $\ref{hyp-derivata-3})$, then for any $t>0$, ${\bf T}(t)$ is a linear and bounded operator from $\mathcal{C}^{\theta_1}_b(\Rd;\Rm)$ to $\mathcal{C}^{\theta_2}_b(\Rd;\Rm)$ for any $0 \le \theta_1\le \theta_2\le 2$ $($resp. for any $0 \le \theta_1\le \theta_2\le 3)$  and estimate \eqref{teta1teta2infty} holds true for such $\theta_1$ and $\theta_2$.

Moreover, if Hypotheses $\ref{hyp-derivata-1}$ $($Hypotheses $\ref{hyp-derivata-2}$ or Hypotheses $\ref{hyp-derivata-3}$, respectively$)$ hold true, then for $\ell=1$ $(\ell\in\{1,2\}$ or $\ell\in\{1,2,3\}$, respectively$)$ and $t>0$, ${\bm T}(t)$ is a bounded linear operator from $\mathcal{C}^{\alpha}(\Rd;\Rm)$ to $C^{\ell}_b(\Rd;\Rm)$, for any $\alpha\in(0,\ell)$ and it satisfies for any $\eps>0$
 \begin{equation}\label{alphaelle}
\|{\bm T}(t)\f\|_{C_b^{\ell}(\Rd;\Rm)}\le c_\eps e^{(H_\nu+\varepsilon)t}t^{-\frac{\ell-\alpha}{2}}\|\f\|_{\mathcal{C}^{\alpha}(\Rd;\Rm)},
\end{equation}
where $H_\nu$ is the constant appearing in \eqref{norm_infty} and $c_{\varepsilon}$ is a positive constant blowing up as $\varepsilon$ tends to $0$. 
\end{thm}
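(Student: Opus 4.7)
The argument is entirely interpolation-theoretic, with the Corollary's pointwise bound \eqref{aim} as the input. Let $N\in\{1,2,3\}$ denote the top order of derivatives controlled under the current hypotheses ($N=1$ under Hypotheses \ref{hyp-derivata-1}, $N=2$ under Hypotheses \ref{hyp-derivata-2}, $N=3$ under Hypotheses \ref{hyp-derivata-3}). Then \eqref{aim}, together with \eqref{norm_infty} in the case $k=\ell=0$, provides the endpoint bounds
\begin{equation*}
\|\bm{T}(t)\|_{\mathcal{L}(C_b^k(\R^d;\R^m),C_b^\ell(\R^d;\R^m))}\le c_\eps e^{(H_\nu+\eps)t}t^{-(\ell-k)/2}
\end{equation*}
for every integer $0\le k\le\ell\le N$ and every $t>0$. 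The plan is to upgrade these integer-order bounds to arbitrary H\"older/Zygmund orders through iterated applications of Theorem \ref{thm_operator_interpolation}, identifying the real interpolation spaces that appear via Theorem \ref{thm_interpolation}.

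The first step is to produce $\bm{T}(t):C_b(\R^d;\R^m)\to\mathcal{C}^{\theta_2}_b(\R^d;\R^m)$ with operator norm at most $c_\eps e^{(H_\nu+\eps)t}t^{-\theta_2/2}$, uniformly in $\theta_2\in[0,N]$. For $\theta_2\in(0,N)$, this comes from Theorem \ref{thm_operator_interpolation} applied to the couples $(C_b,C_b)$ and $(C_b,C_b^N)$ with parameter $\theta_2/N\in(0,1)$; Theorem \ref{thm_interpolation} identifies the codomain as $(C_b,C_b^N)_{\theta_2/N,\infty}=\mathcal{C}^{\theta_2}_b$, and the product of the endpoint norms $e^{H_\nu t}$ and $c_\eps e^{(H_\nu+\eps)t}t^{-N/2}$ yields the claimed power $t^{-\theta_2/2}$. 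The endpoints $\theta_2=0$ and $\theta_2=N$ are read off directly from the Corollary, using $C_b^N\hookrightarrow\mathcal{C}^N_b$ for the latter.

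The second step is the diagonal self-bound $\bm{T}(t):\mathcal{C}^{\theta_2}_b\to\mathcal{C}^{\theta_2}_b$ with norm controlled by $c_\eps e^{(H_\nu+\eps)t}$, i.e.\ without time singularity. For $\theta_2\in(0,N)$ this follows by applying Theorem \ref{thm_operator_interpolation} symmetrically, with $X_0=Y_0=C_b$, $X_1=Y_1=C_b^N$, parameter $\theta_2/N$, and endpoint self-bounds $\bm{T}(t):C_b\to C_b$ and $\bm{T}(t):C_b^N\to C_b^N$. Armed with these two families, \eqref{teta1teta2infty} falls out of one final interpolation in the domain: take $X_0=C_b$, $X_1=\mathcal{C}^{\theta_2}_b$, $Y_0=Y_1=\mathcal{C}^{\theta_2}_b$, and parameter $\theta_1/\theta_2\in(0,1)$; the endpoint norms $c_\eps e^{(H_\nu+\eps)t}t^{-\theta_2/2}$ and $c_\eps e^{(H_\nu+\eps)t}$ combine via the exponent bookkeeping $(t^{-\theta_2/2})^{1-\theta_1/\theta_2}=t^{-(\theta_2-\theta_1)/2}$ to the advertised rate, while Theorem \ref{thm_interpolation} identifies the domain as $(C_b,\mathcal{C}^{\theta_2}_b)_{\theta_1/\theta_2,\infty}=\mathcal{C}^{\theta_1}_b$. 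The auxiliary estimate \eqref{alphaelle} is obtained in one shot by the same method, interpolating $\bm{T}(t):C_b\to C_b^\ell$ and $\bm{T}(t):C_b^\ell\to C_b^\ell$ with parameter $\alpha/\ell$ and reading off the domain as $(C_b,C_b^\ell)_{\alpha/\ell,\infty}=\mathcal{C}^\alpha_b$.

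The anticipated obstacle concerns boundary cases in which an interpolation parameter would reach $0$ or $1$. Those at $0$ and the off-diagonal ones at $1$ are handled either by reading estimates off directly from \eqref{aim} or by exploiting $C_b^N\hookrightarrow\mathcal{C}^N_b$ on the codomain side. The genuine difficulty is the diagonal endpoint $\theta_1=\theta_2=N$ of \eqref{teta1teta2infty}, where one wants $\mathcal{C}^N_b\to\mathcal{C}^N_b$ with no time singularity, but $(C_b,C_b^N)_{1,\infty}$ is not identified with the Zygmund space by Theorem \ref{thm_interpolation} and Theorem \ref{thm_operator_interpolation} itself requires the parameter to lie in $(0,1)$. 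This case has to be treated separately, for instance via the semigroup law $\bm{T}(t)=\bm{T}(t/2)\bm{T}(t/2)$ combined with the bound $\bm{T}(t/2):\mathcal{C}^{N-\eps'}_b\to C_b^N$ from \eqref{alphaelle} and a density/approximation argument. A further bookkeeping point is to keep the constants $c_\eps$ independent of the free parameter $\eps>0$ through the chained interpolations, so that the final exponential growth rate stays $H_\nu+\eps$.
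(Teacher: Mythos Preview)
Your argument is essentially identical to the paper's: the same three interpolation steps (first $C_b\to\mathcal{C}^{\theta_2}_b$, then the diagonal $\mathcal{C}^{\theta_2}_b\to\mathcal{C}^{\theta_2}_b$, then interpolate in the domain with parameter $\theta_1/\theta_2$), and the same one-shot interpolation for \eqref{alphaelle}. Your discussion of the boundary endpoint $\theta_1=\theta_2=N$ is more careful than the paper's---the paper simply invokes the diagonal step there without comment, so the issue you flag is present in the original as well; note, though, that your proposed semigroup-law workaround still leaves a residual $t^{-\eps'/2}$ factor, so it does not fully close that endpoint either (the case is not used anywhere downstream).
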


\begin{proof}
We will just prove the theorem under Hypotheses \ref{hyp-derivata-3}, the other cases follow in a similar way. 
Fix $t\in (0,\infty)$, $\theta\in(0,1)$ and apply Theorem \ref{thm_interpolation}, Theorem \ref{thm_operator_interpolation} (with $X_0=Y_0=X_1=C_b(\R^d;\R^m)$, $Y_1=C_b^3(\R^d;\Rm)$ and $R=\bm{T}(t)$) and estimate \eqref{aim} to obtain that $\bm{T}(t)$ is a bounded operator from $C_b(\R^d;\R^m)$ to $\mathcal{C}^{3\theta}_b(\R^d;\R^m)$ and
\begin{align}\label{roger1}
\|\bm{T}(t)\|_{\mathcal{L}(C_b(\R^d;\R^m);\mathcal{C}^{3\theta}_b(\R^d;\R^m))}\leq c_{\varepsilon}e^{(H_\nu+\eps)t}t^{-\frac{3\theta}{2}},\qquad\;\,t\in (0,
\infty).
\end{align}
For the same $\theta$, apply Theorem \ref{thm_interpolation}, Theorem \ref{thm_operator_interpolation} (with $X_0=Y_0=C_b(\R^d;\R^m)$, $X_1=Y_1=C_b^3(\R^d;\Rm)$ and $R=\bm{T}(t)$) and estimate \eqref{aim} to obtain that $\bm{T}(t)$ is a bounded operator from $\mathcal{C}^{3\theta}_b(\R^d;\R^m)$ to $\mathcal{C}^{3\theta}_b(\R^d;\R^m)$ and
\begin{align}\label{roger2}
\|\bm{T}(t)\|_{\mathcal{L}(\mathcal{C}^{3\theta}_b(\R^d;\R^m))}\leq c_{\varepsilon}e^{(H_\nu+\eps)t},\qquad\;\,t\in (0,\infty).
\end{align}
Finally, fix $\alpha\in(0,1)$ and apply Theorem \ref{thm_interpolation}, Theorem \ref{thm_operator_interpolation} (with $X_0=C_b(\R^d;\R^m)$, $X_1=Y_0=Y_1=\mathcal{C}^{3\theta}_b(\R^d;\Rm)$ and $R=\bm{T}(t)$), \eqref{roger1} and \eqref{roger2} to obtain that $\bm{T}(t)$ is a bounded operator from $\mathcal{C}^{3\alpha\theta}_b(\R^d;\R^m)$ to $\mathcal{C}^{3\theta}_b(\R^d;\R^m)$ and
\begin{align}\label{arcangel}
\|\bm{T}(t)\|_{\mathcal{L}(\mathcal{C}^{3\alpha\theta}_b(\R^d;\R^m);\mathcal{C}^{3\theta}_b(\R^d;\R^m))}\leq c_\eps e^{(H_\nu+\eps)t}t^{-\frac{3\theta(1-\alpha)}{2}},\qquad\;\,t\in (0,\infty).
\end{align}
The claim now follows taking $\theta=\theta_2/3$ and $\alpha=\theta_1/\theta_2$ in \eqref{arcangel}, when $\theta_1\neq\theta_2$. The case $\theta_1=\theta_2$ follows arguing as in the proof of \eqref{roger2}. 

The proof of \eqref{alphaelle} follows by the same arguments simply applying Theorem \ref{thm_interpolation}, Theorem \ref{thm_operator_interpolation} (with $X_0=C_b(\R^d;\R^m)$, $X_1=Y_0=Y_1=C^{\ell}_b(\R^d;\Rm)$, $\theta=\alpha/\ell$ and $R=\bm{T}(t)$) and \eqref{aim}.
\end{proof}

\subsection{The elliptic case}

As the scalar case shows, the semigroups associated in the space of bounded and continuous functions to elliptic operators with unbounded coefficients in general are not strongly continuous. Thus, we cannot consider the infinitesimal generator of $\{\bm{T}(t)\}_{t\ge 0}$ but only its weak generator ${\bm A}$ defined throughout the resolvent operator by 
\begin{align}\label{ulambda}
   \uu_{\lambda}(x):=(R(\lambda, {\bm A})\f)(x)=\int_0^{\infty}e^{-\lambda s}(\bm{T}(s)\bm{f})(x)ds,\qquad\;\, x \in \Rd,
\end{align}
for any $\f \in C_b(\Rd;\Rm)$ and any $\lambda>H_\nu$ , thanks to estimate \eqref{aim}. 
Moreover,
\begin{equation}
\|\uu_{\lambda}\|_{C_b(\R^d;\R^m)}\le \frac{1}{\lambda-H_\nu}\|\f\|_{C_b(\R^d;\R^m)},\qquad\;\,\lambda>H_\nu.
\label{stima-norma-sup}
\end{equation}

\begin{thm}\label{thm_Schauder_elliptic}
Assume Hypotheses $\ref{hyp-derivata-3}$. For every $\lambda>H_\nu$, $\alpha\in[0,1)$ and $\bm{f}\in \mathcal{C}^{\alpha}_b(\R^d;\R^m)$, the function $\bm{u}_\lambda$ defined in \eqref{ulambda} belongs to $\mathcal{C}^{2+\alpha}_b(\R^d;\R^m)$ and there exists a positive constant $c$, independent of $\bm{f}$, such that
\begin{align}\label{zyg_ell}
\|\bm{u}_\lambda\|_{\mathcal{C}^{2+\alpha}_b(\R^d;\R^m)}\leq c\|\bm{f}\|_{\mathcal{C}^{\alpha}_b(\R^d;\R^m)}.
\end{align} 
%If, in addition $\bm{f}\in C_b^\alpha(\R^d;\R^m)$, for some $\alpha\in(0,1)$, then $\uu_\lambda$ belongs to $C_b^{2+\alpha}(\R^d;\R^m)$ and there exists a positive constant $C$ depending only on $\alpha$ and $\lambda$ such that
%\begin{align}
%    \|\bm{u}_\lambda\|_{C_b^{2+\alpha}(\R^d;\R^m)}\leq C \|\bm{f}\|_{C_b^{\alpha}(\R^d;\R^m)}.\label{Schauder_ell}
%\end{align}
In particular %, if \textcolor{red}{$\alpha\in (0,1)$}, then 
$\uu_{\lambda}$ is the unique classical solution to the equation $\lambda\uu-\bm{\A}\uu=\f$.
\end{thm}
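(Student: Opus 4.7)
The plan is to exploit the pointwise semigroup estimates in Theorem \ref{thm-1-2} together with a classical splitting trick that trades a half-order of differentiability for H\"older/Zygmund regularity. First I would check that $\bm{u}_\lambda$ is a well-defined element of $C_b(\R^d;\R^m)$ with $\|\bm{u}_\lambda\|_\infty\le(\lambda-H_\nu)^{-1}\|\bm{f}\|_\infty$ by \eqref{stima-norma-sup}. Then, invoking \eqref{alphaelle} (when $\alpha\in(0,1)$) and \eqref{aim} (when $\alpha=0$), which under Hypotheses \ref{hyp-derivata-3} give
\[
\|D^k\bm{T}(s)\bm{f}\|_\infty\le c_\varepsilon e^{(H_\nu+\varepsilon)s}s^{-(k-\alpha)/2}\|\bm{f}\|_{\mathcal{C}^\alpha_b},\qquad k=1,2,3,
\]
for $\lambda>H_\nu+\varepsilon$, I would justify differentiation under the integral sign for each order $k$ with $k-\alpha<2$, yielding $\bm{u}_\lambda\in C^1_b$ in the Zygmund case $\alpha=0$ and $\bm{u}_\lambda\in C^2_b$ with $D^k\bm{u}_\lambda(x)=\int_0^\infty e^{-\lambda s}D^k\bm{T}(s)\bm{f}(x)\,ds$ for $k=1,2$ in the H\"older case $\alpha\in(0,1)$.

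The core of the argument is the Schauder estimate for $D^2\bm{u}_\lambda$ when $\alpha\in(0,1)$. Given $x,y\in\R^d$, set $s_0:=|x-y|^2$ and split
\[
D^2\bm{u}_\lambda(x)-D^2\bm{u}_\lambda(y)=\bigg(\int_0^{s_0}+\int_{s_0}^{\infty}\bigg)e^{-\lambda s}\big(D^2\bm{T}(s)\bm{f}(x)-D^2\bm{T}(s)\bm{f}(y)\big)\,ds.
\]
On $[0,s_0]$ I would dominate the integrand by $2\|D^2\bm{T}(s)\bm{f}\|_\infty\le cs^{-(2-\alpha)/2}\|\bm{f}\|_{\mathcal{C}^\alpha_b}$, obtaining a contribution of order $s_0^{\alpha/2}=|x-y|^\alpha$ since $\alpha>0$; on $[s_0,\infty)$ the mean value theorem combined with the third-derivative bound gives $c|x-y|s^{-(3-\alpha)/2}\|\bm{f}\|_{\mathcal{C}^\alpha_b}$, integrating to $c|x-y|\cdot s_0^{(\alpha-1)/2}=c|x-y|^\alpha$ since $\alpha<1$. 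Together with the $L^\infty$ bounds on $\bm{u}_\lambda$ and its derivatives of orders $<2+\alpha$, this yields \eqref{zyg_ell}. The Zygmund endpoint $\alpha=0$ is handled by applying the very same splitting, now at $s=|h|^2$, to the second difference $D\bm{u}_\lambda(x+2h)-2D\bm{u}_\lambda(x+h)+D\bm{u}_\lambda(x)$, using $\|D\bm{T}(s)\bm{f}\|_\infty\le cs^{-1/2}\|\bm{f}\|_\infty$ for small $s$ and $\|D^3\bm{T}(s)\bm{f}\|_\infty\le cs^{-3/2}\|\bm{f}\|_\infty$ for large $s$; both pieces contribute $c|h|\|\bm{f}\|_\infty$, so $D\bm{u}_\lambda\in\mathcal{C}^1_b$ and hence $\bm{u}_\lambda\in\mathcal{C}^2_b$.

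That $\bm{u}_\lambda$ solves the elliptic equation would follow by interchanging $\bm{\mathcal A}$ with the integral (justified locally by the pointwise derivative estimates together with the local boundedness of the coefficients) and then integrating by parts in the time variable:
\[
\bm{\mathcal A}\bm{u}_\lambda(x)=\int_0^\infty e^{-\lambda s}\bm{\mathcal A}\bm{T}(s)\bm{f}(x)\,ds=\int_0^\infty e^{-\lambda s}D_s\bm{T}(s)\bm{f}(x)\,ds=-\bm{f}(x)+\lambda\bm{u}_\lambda(x).
\]
For uniqueness, any $\bm{u}\in C^2_b(\R^d;\R^m)$ satisfying $\lambda\bm{u}-\bm{\mathcal A}\bm{u}=\bm{0}$ produces two bounded-in-strips classical solutions of $D_t\bm{v}=\bm{\mathcal A}\bm{v}$, $\bm{v}(0,\cdot)=\bm{u}$, namely $\bm{T}(t)\bm{u}$ and $e^{\lambda t}\bm{u}$; by Theorem \ref{exi_thm} they coincide, and comparing $\|\bm{T}(t)\bm{u}\|_\infty\le e^{H_\nu t}\|\bm{u}\|_\infty$ with $e^{\lambda t}\|\bm{u}\|_\infty$ and letting $t\to\infty$ forces $\bm{u}=\bm{0}$ since $\lambda>H_\nu$.

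The main technical obstacle is the borderline integrability: applied directly, Theorem \ref{thm-1-2} with $\theta_1=\alpha,\theta_2=2+\alpha$ gives $\|\bm{T}(s)\bm{f}\|_{\mathcal{C}^{2+\alpha}_b}\le cs^{-1}\|\bm{f}\|_{\mathcal{C}^\alpha_b}$, whose $L^1$ singularity at $s=0$ prevents one from bounding $[D^2\bm{u}_\lambda]_{\mathcal{C}^\alpha_b}$ by pushing the H\"older seminorm inside the integral. The splitting $[0,|x-y|^2]\cup[|x-y|^2,\infty)$ together with the third-order bound (whose availability relies critically on Hypotheses \ref{hyp-derivata-3}) is precisely what restores integrability on both pieces and makes the exponent come out as $\alpha$ on each side.
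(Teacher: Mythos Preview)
Your proposal is correct and follows essentially the same approach as the paper: both split the resolvent integral at $s_0=|h|^2$ (respectively $|x-y|^2$), use $\|D^k\bm{T}(s)\f\|_\infty\le c s^{-(k-\alpha)/2}\|\f\|_{\mathcal{C}^\alpha_b}$ on the short-time piece and the extra third-order derivative bound (via the mean value theorem / second-order Taylor expansion) on the long-time piece to avoid the borderline $s^{-1}$ singularity, treating the Zygmund case $\alpha=0$ with second differences of $D^1\bm{u}_\lambda$ and the H\"older case $\alpha\in(0,1)$ with first differences of $D^2\bm{u}_\lambda$. Your uniqueness argument via $\bm{T}(t)\bm{u}=e^{\lambda t}\bm{u}$ and the growth bound is a clean addition that the paper does not spell out explicitly.
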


\begin{proof}
In order to simplify the notation and improve readability, throughout the proof the letter $K$ will represent a positive constant, independent of $\bm{f}$, $t$ and of $x$, that may change from line to line. Since the norms introduced in Definition \ref{defn_Hold_Zyg} are different if $\alpha=0$ or $\alpha\in(0,1)$ we will split the proof into two parts.

\noindent {\boldmath$(\alpha=0).$} We start by showing that if $\lambda>H_\nu$ and $\bm{f}\in C_b(\R^d;\R^m)$, then $\bm{u}_\lambda$ belongs to $C^1_b(\R^d;\R^m)$ and 
\begin{align*}
    (D^1\bm{u}_{\lambda})(x)=\int_0^{\infty}e^{-\lambda s}(D^1\bm{T}(s)\bm{f})(x)ds,\qquad\;\, x \in \Rd.
\end{align*}
Indeed, the continuity of $\bm{u}_\lambda$ is trivial to prove. The differentiability follows from the Lebesgue dominated convergence theorem, and observing that, by \eqref{aim} (with $\eps=1$), it holds
\begin{align}
&|(\bm{T}(t)\bm{f})(x+h)-(\bm{T}(t)\bm{f})(x)- (D^1(\bm{T}(t)\bm{f})(x))h|\notag\\
&\qquad\qquad\qquad=\bigg (\sum_{j=1}^m\bigg (\sum_{i=1}^d\int_0^1\big [(\partial_i\bm{T}(t)\bm{f})_j(x+\sigma h)-(\partial_i\bm{T}(t)\bm{f})_j(x)\big ]h_id\sigma\bigg )^2\bigg )^{\frac{1}{2}}\notag\\
&\qquad\qquad\qquad\le K e^{(H_\nu+1)t}t^{-\frac{1}{2}}|h|\|\bm{f}\|_{C_b(\R^d;\R^m)}.\label{zyg_C1}
\end{align}
Regarding estimate \eqref{zyg_ell}, we start by observing that by \eqref{aim} (with $\eps=\frac{\lambda-H_\nu}{2}$) we get
\begin{align}
\|D^1\bm{u}_\lambda\|_{C_b(\R^d;\R^{md})} 
% &\leq \int_0^{\infty}e^{-\lambda s} \|D^{1}(\bm{T}(s)\bm{f})\|_{C_b(\R^d;\R^{md})}ds\notag\\
\leq K\left(\int_0^{\infty}e^{-\frac{\lambda-H_\nu}{2} s}s^{-\frac{1}{2}} ds\right)\|\bm{f}\|_{C_b(\R^d;\R^m)}\leq K\|\bm{f}\|_{C_b(\R^d;\R^m)}.\label{zyg_deriv1}
\end{align}
%By \eqref{seminorm_bounds} it holds that for any $\alpha\in(0,1)$ the function $\bm{u}_\lambda$ belongs to $C_b^{1+\alpha}(\R^d;\R^m)$. 

Now, in order to estimate the Zygmund seminorm of $D^1\uu_\lambda$, we write $D^1\uu_\lambda=\bm{a}_h+\bm{b}_h$ with
\begin{align*}
    \bm{a}_h(x):=\int_0^{|h|^2}e^{-\lambda s}(D^{1}\bm{T}(s)\bm{f})(x)ds, && \bm{b}_h(x):=\int_{|h|^2}^{\infty}e^{-\lambda s}(D^{1}\bm{T}(s)\bm{f})(x)ds,
\end{align*}
for any $x,h\in\R^d$.
Observing that, for any $x, h \in \Rd$
\begin{align*}
&|(D^1\uu_{\lambda})(x+2h) -2(D^1\uu_{\lambda})(x+h)+(D^1\uu_{\lambda})(x)|\\
 &\qquad\qquad\qquad\le|\bm{a}_h(x+2h) -2\bm{a}_h(x+h)+\bm{a}_h(x)|+|\bm{b}_h(x+2h) -2\bm{b}_h(x+h)+\bm{b}_h(x)|,
\end{align*}
we can estimate the Zygmund seminorms of $\bm{a}_h$ and $\bm{b}_h$, respectively. 

Since
$|(D^{1}\bm{T}(s)\bm{f})(x+2h)-2(D^{1}\bm{T}(s)\bm{f})(x+h)+(D^{1}\bm{T}(s)\bm{f})(x)|\le K\|D^1\bm{T}(s)\f\|_{C_b(\R^d;\R^{md})}$ for every $s\in (0,\infty)$ and $x,h\in\R^d$, by \eqref{aim} (with $\varepsilon=\lambda-H_\nu$) it holds that
\begin{align}
|\bm{a}_h(x+2h)-2\bm{a}_h(x+h)+\bm{a}_h(x)|
% \leq & \int_0^{|h|^2}e^{-\lambda s}|(D^{1}\bm{T}(s)\bm{f})(x+2h)-2(D^{1}\bm{T}(s)\bm{f})(x+h)+(D^{1}\bm{T}(s)\bm{f})(x)|ds\notag\\
% \leq & K\bigg (\int_0^{|h|^2}e^{-\frac{\lambda-H}{2} s}s^{-\frac{1}{2}}ds\bigg)\|\bm{f}\|_{C_b(\R^d;\R^m)}\notag\\
\leq & K\bigg (\int_0^{|h|^2}s^{-\frac{1}{2}}ds\bigg )\|\bm{f}\|_{C_b(\R^d;\R^m)}=K|h|\|\bm{f}\|_{C_b(\R^d;\R^m)}.
\label{zyg_a1_ell}
\end{align}
%while if $|h|\geq 1$, using again \eqref{aim}, we get
%\begin{align}
%|\bm{a}(x+2h) &-2\bm{a}(x+h)+\bm{a}(x)|\notag\\
%&\leq \int_0^{|h|^2}e^{-\lambda s}|D^{1}(\bm{T}(s)\bm{f})(x+2h)-2D^{1}(\bm{T}(s)\bm{f})(x+h)+D^{1}(\bm{T}(s)\bm{f})(x)|ds\notag\\
%&\leq c\left(\int_0^{|h|^2}e^{-(\lambda-H) s}\max\{s^{-1/2},1\}ds\right)\|\bm{f}\|_{C_b(\R^d;\R^m)}\notag\\
%&\leq c\left(\int_0^1s^{-1/2}ds+\int_0^{\infty}e^{-(\lambda-H)s}ds\right)\|\bm{f}\|_{C_b(\R^d;\R^m)}\leq c|h|\|\bm{f}\|_{C_b(\R^d;\R^m)}.\label{zyg_a2_ell}
%\end{align}
To provide an estimate for $\bm{b}_h$ we need the following preliminary observation:
\begin{align}
((D^{1}\bm{T}(s)\bm{f})(y+2h)-(D^{1}\bm{T}(s)\bm{f})(y+h))_{ij} &=\sum_{r=1}^d\int_0^1(\partial_{ir}\bm{T}(s)\bm{f})_j(y+\sigma h)h_r d\sigma
% \\
% [((D^{1}\bm{T}(s)\bm{f})(x+h)-(D^{1}\bm{T}(s)\bm{f})(x))k]_j &=\sum_{i,r=1}^d\int_0^1(\partial_{ir}\bm{T}(s)\bm{f})_j(x+\sigma h)k_ih_r d\sigma.\label{cuts2}
\label{obser-1}
\end{align}
for every $y,h\in\R^d$, $s\in (0,\infty)$, $i=1,\ldots,d$ and $j=1,\ldots,m$, which (applied two times, the first one with the choice $y=x$ and the second time with $y=x-h$) implies that
\begin{align}
&|(D^{1}\bm{T}(s)\bm{f})(x+2h)-2(D^{1}\bm{T}(s)\bm{f})(x+h)+(D^{1}\bm{T}(s)\bm{f})(x)|\notag\\
&\qquad\qquad\qquad=\bigg (\sum_{j=1}^m\bigg |\sum_{i,r=1}^d\int_0^1[(\partial_{ir}\bm{T}(s)\bm{f})_j(x+(1+\sigma)h)-(\partial_{ir}\bm{T}(s)\bm{f})_j(x+\sigma h)]h_rd\sigma\bigg |^2\bigg )^{\frac{1}{2}}\notag\\
&\qquad\qquad\qquad=\bigg (\sum_{j=1}^m\bigg |\sum_{i,l,r=1}^d\int_0^1\int_0^1\big
((\partial_{ilr}\bm{T}(s)\bm{f})_j(x+(\tau+\sigma)h)h_rh_ld\sigma d\tau\bigg |^2\bigg )^{\frac{1}{2}}\notag\\
&\qquad\qquad\qquad\le K\|D^3\bm{T}(s)\bm{f}\|_{C_b(\R^d;\R^{md^3})}|h|^2\notag
\end{align}
for every $x,h\in\R^d$ and $s\in(0,\infty)$. By the previous estimate and \eqref{aim} (with $\varepsilon=\lambda-H_\nu$), we get
\begin{align}
|\bm{b}_h(x+2h)-2\bm{b}_h(x+h)+\bm{b}_h(x)|
% \le &\int_{|h|^2}^{\infty}e^{-\lambda s}|(D^{1}\bm{T}(s)\bm{f})(x+2h)-2(D^{1}\bm{T}(s)\bm{f})(x+h)+(D^{1}\bm{T}(s)\bm{f})(x)|ds\notag\\
% =&\int_{|h|^2}^{\infty}e^{-\lambda s}\max_{|k|=1}\bigg (\sum_{j=1}^m\bigg |\sum_{i,r=1}^d\int_0^1((\partial_{ir}\bm{T}(s)\bm{f})_j(x+(1+\sigma)h)-(\partial_{ir}\bm{T}(s)\bm{f})_j(x+\sigma h))k_jh_$\R^d$\sigma\bigg |^2\bigg )^{\frac{1}{2}}ds\notag\\
% = &\int_{|h|^2}^{\infty}e^{-\lambda s}\max_{|k|=1}\bigg (\sum_{j=1}^m\bigg |\sum_{i,l,r=1}^d\int_0^1\int_0^1\big
% ((\partial_{ilr}\bm{T}(s)\bm{f})_j(x+(\tau+\sigma)h\big )k_jh_rh_ld\sigma d\tau\bigg |^2\bigg )^{\frac{1}{2}}ds\notag\\
% \le &K\bigg (\int_{|h|^2}^{\infty}e^{-\lambda s}\|D^3\bm{T}(s)\bm{f}\|_{C_b(\R^d;\R^{3md})}ds\bigg )|h|^2\notag\\
% \le & K\bigg (\int_{|h|^2}^{\infty}e^{-\frac{\lambda-H}{2}s}s^{-\frac{3}{2}}ds\bigg )|h|^2\|\bm{f}\|_{C_b(\R^d;\R^m)}\notag\\
\leq K\bigg (\int_{|h|^2}^{\infty}s^{-\frac{3}{2}}ds\bigg )|h|^2\|\bm{f}\|_{C_b(\R^d;\R^m)}\le K|h|\|\bm{f}\|_{C_b(\R^d;\R^m)}.
\label{zyg_b_ell}
\end{align}
Combining 
\eqref{stima-norma-sup}, \eqref{zyg_deriv1}, \eqref{zyg_a1_ell} and \eqref{zyg_b_ell} we get \eqref{zyg_ell} in the case $\alpha=0$.
\smallskip

\noindent {\boldmath$(\alpha\in(0,1)).$} By the arguments in \eqref{zyg_C1}, for any $\bm{f}\in \mathcal{C}^{\alpha}_b(\R^d;\R^m)$,  $\bm{u}_\lambda$ belongs to $C^1_b(\R^d;\R^m)$. The facts that $\bm{u}_\lambda$ belongs to $C^2_b(\R^d;\R^m)$ and
\begin{align*}
(D^2\bm{u}_{\lambda})(x)=\int_0^{\infty}e^{-\lambda s}(D^2\bm{T}(s)\bm{f})(x)ds,\qquad\;\, x \in \Rd,
\end{align*}
follow by the Lebesgue dominated convergence theorem.
Moreover, 
% and observing that by \eqref{aim} (with $\eps=1$) it holds that
% \begin{align*}
% &|(D^1\bm{T}(t)\bm{f})(x+h)k-(D^1\bm{T}(t)\bm{f})(x)k- (D^2\bm{T}(t)\bm{f})(x))(h,k)|\\
% =&\bigg (\sum_{j=1}^m\bigg (\sum_{i,r=1}^d\int_0^1\left((\partial_{ir}\bm{T}(t)\bm{f})_j(x+\sigma h)-(\partial_{ir}\bm{T}(t)\bm{f})_j(x)\right)h_rk_id\sigma\bigg )^2\bigg )^{\frac{1}{2}}\\
% \leq &K e^{(H+1)t}t^{\frac{\alpha-2}{2}}|h||k|\|\bm{f}\|_{\mathcal{C}^\alpha(\R^d;\R^m)}.
% \end{align*}
% for any $t\in (0,\infty)$ and $\textcolor{red}{x,h,k \in \Rd}$. Observe that 
by \eqref{teta1teta2infty} (with $\eps=\frac{\lambda-H_\nu}{2}$), it holds that
\begin{align}
\|D^2\bm{u}_\lambda\|_{C_b(\R^d;\R^{md^2})} 
% &\leq \int_0^{\infty}e^{-\lambda s} \|D^{2}(\bm{T}(s)\bm{f})\|_{C_b(\R^d;\R^m\times \R^{2\times d})}ds\notag\\
\leq K\|\bm{f}\|_{\mathcal{C}^{\alpha}_b(\R^d;\R^m)}\left(\int_0^{\infty}e^{-\frac{\lambda-H_\nu}{2} s}s^{\frac{\alpha-2}{2}} ds\right)= K\|\bm{f}\|_{\mathcal{C}^{\alpha}_b(\R^d;\R^m)}.
\label{Schauder_deriv2}
\end{align}
It remains to prove that $D^{2}\bm{u}_\lambda$ belongs to $\mathcal{C}^{\alpha}_b(\R^d,\R^{md^2})$. We set
\begin{align*}
\tilde{\bm{a}}_h(x):=\int_0^{|h|^2}e^{-\lambda s}(D^{2}\bm{T}(s)\bm{f})(x)ds, && \tilde{\bm{b}}_h(x):=\int_{|h|^2}^{\infty}e^{-\lambda s}(D^{2}\bm{T}(s)\bm{f})(x)ds.
    \end{align*}
for every $x,h\in\R^d$. By design we get
\begin{align*}
|D^{2}\bm{u}_\lambda(x+h)-D^{2}\bm{u}_\lambda(x)| &\leq |\tilde{\bm{a}}_h(x+h)-\tilde{\bm{a}}_h(x)|+ |\tilde{\bm{b}}_h(x+h)-\tilde{\bm{b}}_h(x)|,\qquad\;\,x,h\in\R^d.
\end{align*}
Using \eqref{teta1teta2infty} (with $\varepsilon=\lambda-H_\nu$) we estimate
\begin{align}
|\tilde{\bm{a}}_h(x+h)-\tilde{\bm{a}}_h(x)|\leq &\int_0^{|h|^2}e^{-\lambda s}|(D^{2}\bm{T}(s)\bm{f})(x+h)-(D^{2}\bm{T}(s)\bm{f})(x)|ds\notag\\
% &\leq K \int_0^{|h|^2}e^{-\lambda s}\|D^{2}(\bm{T}(s)\bm{f})\|_{C_b(\R^d;\R^{2md})}ds\notag\\
% &\leq K\bigg (\int_0^{|h|^2}e^{-\frac{\lambda-H}{2} s}s^{(\alpha-2)/2}ds\bigg )\|\bm{f}\|_{\mathcal{C}^\alpha_b(\R^d;\R^m)}\notag\\
\leq &K\bigg (\int_0^{|h|^2}s^{\frac{\alpha-2}{2}}ds\bigg )\|\bm{f}\|_{\mathcal{C}^\alpha_b(\R^d;\R^m)}\notag\\
= & K|h|^{\alpha}\|\bm{f}\|_{\mathcal{C}^\alpha_b(\R^d;\R^m)}
\label{Schauder_a_ell}
\end{align}
and
\begin{align}
|\tilde{\bm{b}}_h(x+h)-\tilde{\bm{b}}_h(x)| & \leq \int_{|h|^2}^{\infty}e^{-\lambda s}|(D^{2}\bm{T}(s)\bm{f})(x+h)-(D^{2}\bm{T}(s)\bm{f})(x)|ds\notag\\
&=\int_{|h|^2}^{\infty}e^{-\lambda s}\bigg (\sum_{j=1}^m\bigg |\sum_{i,l=1}^d[(\partial_{il}\bm{T}(s)\bm{f})_j(x+h)-(\partial_{il}\bm{T}(s)\bm{f})_j(x)]\bigg |^2\bigg )^{\frac{1}{2}}ds\notag\\
&=\int_{|h|^2}^{\infty}e^{-\lambda s}\bigg (\sum_{j=1}^m\bigg |\sum_{i,l,r=1}^d\int_0^1(\partial_{ilr}\bm{T}(s)\bm{f})_j(x+\sigma h)h_rd\sigma\bigg |^2\bigg )^{\frac{1}{2}}ds\notag\\
% & \leq K|h|\int_{|h|^2}^{\infty}e^{-\lambda s}\|D^{3}\bm{T}(s)\bm{f}\|_{C_b(\R^d;\R^{3md})}ds\notag\\
%         & \leq K\bigg (\int_{|h|^2}^{\infty}e^{-\frac{\lambda-H}{2} s} s^{(\alpha-3)/2}ds\bigg ) |h|\|\bm{f}\|_{\mathcal{C}^\alpha_b(\R^d;\R^m)}\notag\\
& \leq K\bigg (\int_{|h|^2}^{\infty} s^{\frac{\alpha-3}{2}}ds\bigg ) |h|\|\bm{f}\|_{\mathcal{C}^\alpha_b(\R^d;\R^m)}\notag\\
&= K |h|^{\alpha}\|\bm{f}\|_{\mathcal{C}^\alpha_b(\R^d;\R^m)}. \label{Schauder_b_ell}
\end{align}
Combining \eqref{zyg_deriv1}, \eqref{Schauder_deriv2}, \eqref{Schauder_a_ell} and \eqref{Schauder_b_ell} we get \eqref{zyg_ell} for any $\alpha\in(0,1)$.

To complete the proof, we need to show that, if $\lambda>H_\nu$, then $\uu_{\lambda}$ solves the elliptic equation $\lambda\uu-\bm{\A}\uu=\f$ when $\f\in\mathcal{C}^{\alpha}_b(\R^d;\R^m)$ and $\alpha\in [0,1)$. For this purpose, since we can differentiate under the integral sign, it is immediate to check that
\begin{align*}
(\bm{\A}\uu_{\lambda})(x)=\int_0^{\infty}e^{-\lambda t}(\bm{\A}\bm{T}(t)\f)(x)dt,\qquad\;\,x\in\R^d.
\end{align*}
Since $D_t\f=\bm{\A}\bm{T}(\cdot)\f$, integrating by parts, we easily see that $\lambda\uu_{\lambda}-\bm{\A}\uu_{\lambda}=\f$.
\end{proof}

\subsection{The parabolic case}
For a fixed $T>0$, we study the regularity of the mild solution of the problem
\begin{align}
    \eqsys{D_t\bm{w}(t,x)=(\bm{\A}\bm{w})(t,x)+\bm{g}(t,x), 
    & (t,x)\in(0,T]\times\R^d;\\ [1mm]
    \bm{w}(0,x)=\bm{f}(x) & x\in\R^d,}
\label{para-nonhom}
\end{align}
namely, the regularity of the function $\bm{v}$, defined by 
\begin{align}\label{Sol_Mild_Parab}
    \bm{v}(t,x):=(\bm{T}(t)\bm{f})(x)+\int_0^t[(\bm{T}(s)\bm{g}(t-s,\cdot)](x)ds,\qquad\;\, t\in[0,T],\ x\in\R^d,
\end{align}
when $\bm{f}$ and $\bm{g}$ belong to suitable subsets of bounded and continuous functions. 
%In order to approach the problem we need to define some functional spaces. For $k\in \N\cup\{0\}$ (resp. $\alpha\in (0,\infty)\setminus\N$) we introduce the space $C^{0,k}_b([0,T]\times\R^d;\R^m)$ (resp. $\mathcal{C}^{0,\alpha}_b([0,T]\times\R^d;\R^m)$) as the set of all the bounded and continuous functions $\bm{g}:[0,T]\times\R^d\to\R^m$ such that for every $t\in[0,T]$ the function $\bm{g}(t,\cdot)$ belongs to $C^{k}_b(\R^d;\R^m)$ (resp. to $\mathcal{C}^{\alpha}_b(\R^d;\R^m)$) and
%\begin{align*}          
%&\|\bm{g}\|_{C^{0,k}_b([0,T]\times\R^d;\R^m)}:=\sup_{t\in[0,T]}\|\bm{g}(t,\cdot)\|_{C^{k}_b(\R^d;\R^m)}<\infty,\\[1mm]
%&{\rm (resp.\,} 
%\|\bm{g}\|_{\mathcal{C}^{0,\alpha}_b([0,T]\times\R^d;\R^m)}:=\sup_{t\in[0,T]}\|\bm{g}(t,\cdot)\|_{\mathcal{C}^{\alpha}_b(\R^d;\R^m)}<\infty{\rm )}.
%\end{align*}
%With this norm $C^{0,k}_b([0,T]\times\R^d;\R^m)$ 
%(resp. $\mathcal{C}^{0,\alpha}_b([0,T]\times\R^d;\R^m)$)
%is a Banach space.

\begin{thm}
\label{thm-4.3}
Assume Hypotheses $\ref{hyp-derivata-3}$ and fix $T>0$, $\alpha\in[0,1)$, $\bm{f}\in \mathcal{C}^{2+\alpha}_b(\R^d;\R^m)$ and $\bm{g}\in \mathcal{C}^{0,\alpha}_b([0,T]\times \R^d;\R^m)$. The function $\bm{v}$, introduced in \eqref{Sol_Mild_Parab}, belongs to $\mathcal{C}^{0,2+\alpha}_b([0,T]\times\R^d;\R^m)$ and there exists a constant $c>0$, independent of $\bm{f}$ and $\bm{g}$, such that 
\begin{align}
\|\bm{v}\|_{\mathcal{C}^{0,2+\alpha}_b([0,T]\times\R^d;\R^m)}\leq c\big (\|\bm{f}\|_{\mathcal{C}^{2+\alpha}_b(\R^d;\R^m)}+\|\bm{g}\|_{\mathcal{C}^{0,\alpha}_b([0,T]\times\R^d;\R^m)}\big ).
\label{Schauder_parab}
\end{align}
In particular $\bm{v}$ is the unique bounded classical solution to the Cauchy problem \eqref{para-nonhom}.
\end{thm}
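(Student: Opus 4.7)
The plan is to split $\bm v=\bm v_1+\bm v_2$, where $\bm v_1(t,\cdot):=\bm T(t)\bm f$ and $\bm v_2(t,x):=\int_0^t(\bm T(s)\bm g(t-s,\cdot))(x)\,ds$, and to treat the two pieces separately in the spirit of the elliptic argument in Theorem \ref{thm_Schauder_elliptic}. For $\bm v_1$ the estimate is essentially immediate: applying Theorem \ref{thm-1-2} with $\theta_1=\theta_2=2+\alpha\in[0,3]$ (which is covered under Hypotheses \ref{hyp-derivata-3}) gives $\|\bm T(t)\bm f\|_{\mathcal C^{2+\alpha}_b(\R^d;\R^m)}\le c_\varepsilon e^{(H_\nu+\varepsilon)t}\|\bm f\|_{\mathcal C^{2+\alpha}_b(\R^d;\R^m)}$, and taking the supremum over $t\in[0,T]$ produces the $\bm v_1$-contribution to \eqref{Schauder_parab}.

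The work is concentrated on the Duhamel term $\bm v_2$, and, exactly as in Theorem \ref{thm_Schauder_elliptic}, it splits into the cases $\alpha\in(0,1)$ and $\alpha=0$, since the target regularity is measured by a H\"older norm in the first case and by a Zygmund norm in the second. In the case $\alpha\in(0,1)$, I would first establish a sup-norm bound on $D^2\bm v_2$ by differentiating twice under the integral sign; this is legitimate because Theorem \ref{thm-1-2}, used with $\theta_1=\alpha,\theta_2=2$, provides $\|D^2\bm T(s)\bm g(t-s,\cdot)\|_\infty\le C\,s^{-(2-\alpha)/2}\|\bm g\|_{\mathcal C^{0,\alpha}_b([0,T]\times\R^d;\R^m)}$, and $(2-\alpha)/2<1$. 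For the spatial $\alpha$-H\"older seminorm of $D^2\bm v_2(t,\cdot)$, I would then mimic the splitting trick from Theorem \ref{thm_Schauder_elliptic}: write the increment $D^2\bm v_2(t,x+h)-D^2\bm v_2(t,x)$ as the sum of the integrals of the spatial increment of $D^2\bm T(s)\bm g(t-s,\cdot)$ over $(0,s_0)$ and over $(s_0,t)$, where $s_0:=\min\{t,|h|^2\}$; control the first integral by twice the $L^\infty$ estimate just recalled, and control the second via the mean value theorem together with $\|D^3\bm T(s)\bm g(t-s,\cdot)\|_\infty\le C\,s^{-(3-\alpha)/2}\|\bm g\|_{\mathcal C^{0,\alpha}_b([0,T]\times\R^d;\R^m)}$, which comes from Theorem \ref{thm-1-2} with $\theta_2=3$ --- the point where the full strength of Hypotheses \ref{hyp-derivata-3} is used. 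Each piece then contributes $C|h|^\alpha\|\bm g\|_{\mathcal C^{0,\alpha}_b}$. The case $\alpha=0$ is handled analogously but one derivative lower, working with $D^1\bm v_2$ and replacing the H\"older seminorm by the Zygmund second-difference seminorm, split again at $s=|h|^2$ and controlled via the $L^\infty$-estimates on $D^1\bm T(s)\bm g$ (short time) and $D^3\bm T(s)\bm g$ (long time).

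To upgrade $\bm v$ from a mild representation to a bounded classical solution of \eqref{para-nonhom}, I would approximate $\bm f,\bm g$ by smooth compactly supported data, for which the corresponding mild formula yields a $\mathcal C^{1,2}$ solution by the approximation scheme of Section \ref{sect-2} together with the interior Schauder estimates recorded in the Appendix; the uniform bound \eqref{Schauder_parab} and the convergence properties in Remark \ref{rmk-1} then allow passage to the limit in the PDE satisfied by $\bm v$. Uniqueness of the bounded classical solution is inherited from Theorem \ref{exi_thm} applied to the difference of two candidate solutions, which solves the homogeneous problem with zero initial datum. The principal technical difficulty is the spatial H\"older estimate on $D^2\bm v_2$ near the time singularity $s=0$: it is the sharp bound $s^{-(3-\alpha)/2}$ on $\|D^3\bm T(s)\bm g\|_\infty$ that forces the third-order pointwise estimates and therefore the strongest set of assumptions, namely Hypotheses \ref{hyp-derivata-3}; losing one spatial derivative in the smoothing would make the key integral divergent.
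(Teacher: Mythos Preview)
Your proposal is correct and follows the paper's argument almost verbatim for the regularity estimate \eqref{Schauder_parab}: the same decomposition $\bm v=\bm T(\cdot)\bm f+\bm w$, the same use of Theorem \ref{thm-1-2} with $\theta_1=\theta_2=2+\alpha$ for the homogeneous part, and the same splitting of $D^2\bm w$ (resp.\ $D^1\bm w$ when $\alpha=0$) at $s=\min\{t,|h|^2\}$, with the short-time piece controlled by the $C^2_b$-bound and the long-time piece by the $C^3_b$-bound coming from \eqref{teta1teta2infty} and \eqref{alphaelle}.

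The only genuine divergence is in the passage from mild to classical solution. The paper does not approximate: once the spatial $C^{2+\alpha}$ regularity of $\bm w$ is in hand, it differentiates the Duhamel formula directly, writing $\bm{\A}\bm w(t,x)=\int_0^t(\bm{\A}\bm T(t-s)\bm g(s,\cdot))(x)\,ds=\int_0^t(D_t\bm T(t-s)\bm g(s,\cdot))(x)\,ds$ and then checking via dominated convergence that $D_t\bm w=\bm{\A}\bm w+\bm g$. Your approximation route is legitimate, but note that it is not self-contained as stated: to know that the mild formula with smooth compactly supported data already produces a $C^{1,2}$ solution you cannot simply invoke Remark \ref{rmk-1} or the appendix (both concern the homogeneous equation); you would still need either the paper's direct differentiation for smooth $\bm g$, or a separate construction of the inhomogeneous solution via Cauchy--Dirichlet problems on balls. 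The paper's direct computation avoids this detour.
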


\begin{proof}
As in the proof of Theorem \ref{thm_Schauder_elliptic}, $K$ will denote a positive constant independent of $\bm{f}$, $\bm{g}$, $x$ and $T$ that may change from line to line. 
By \eqref{teta1teta2infty} (with $\theta_1=\theta_2=2+\alpha$ and $\eps=|H_\nu|$), we get
\begin{align}
\sup_{t\in[0,T]}\|\bm{T}(t)\bm{f}\|_{\mathcal{C}^{2+\alpha}_b(\R^d;\R^m)}\leq Ke^{2|H_\nu|T}\|\bm{f}\|_{\mathcal{C}^{2+\alpha}_b(\R^d;\R^m)}.
\label{Preserv_reg}
\end{align}
Now, we set
\begin{align*}
\bm{w}(t,x):=\int_0^t(\bm{T}(s)\bm{g}(t-s,\cdot))(x)ds,\qquad\;\,t\in [0,T],\;\,x\in\R^d,
\end{align*}
and study its regularity. We consider the cases $\alpha=0$ and $\alpha\in (0,1)$ separately.
\smallskip

\noindent {\boldmath$(\alpha=0).$} Arguing as in Theorem \ref{thm_Schauder_elliptic} we get that for every $t\in[0,T]$ the function $\bm{w}(t,\cdot)$ belongs to $C_b^1(\R^d;\R^m)$ and it holds
\begin{align*}
(D^1\bm{w})(t,x)=\int_0^t(D^1\bm{T}(s)\bm{g}(t-s,\cdot))(x)ds,\qquad t\in[0,T],\ x\in\R^d.
\end{align*}
By \eqref{aim} there exists a positive constant $c=c(T)$ such that
\begin{align}\label{ff}
\|\bm{w}\|_{C_b^{0,1}([0,T]\times\R^d;\R^m)}\leq c\|\bm{g}\|_{C_b([0,T]\times\R^d;\R^m)}.
\end{align}

In order to show that $\bm{w}$ belongs to $\mathcal{C}^{0,2}_b([0,T]\times\R^d;\R^m)$ we proceed in a similar way as in the proof of Theorem \ref{thm_Schauder_elliptic} and introduce the functions $\bm{a}_{t,h}$ and $\bm{b}_{t,h}$, defined by
\begin{align*}
\bm{a}_{t,h}(x):=\int_0^{\min\{t,|h|^2\}}(D^1\bm{T}(s)\bm{g}(t-s,\cdot))(x)ds,\qquad\;\, \bm{b}_{t,h}(x):=\int_{\min\{t,|h|^2\}}^{t}(D^1\bm{T}(s)\bm{g}(t-s,\cdot))(x)ds.
\end{align*}
for every $x,h\in\R^d$ and $t\in[0,T]$.
By \eqref{aim} (with $\eps=|H_\nu|$), we get
\begin{align}
&|\bm{a}_{t,h}(x+2h)-2\bm{a}_{t,h}(x+h)+\bm{a}_{t,h}(x)|\notag\\
\leq &\int_0^{\min\{t,|h|^2\}}|(D^1\bm{T}(s)\bm{g}(t-s,\cdot))(x+2h)-2(D^1\bm{T}(s)\bm{g}(t-s,\cdot))(x+h)+(D^1\bm{T}(s)\bm{g}(t-s,\cdot))(x)|ds\notag\\
% \leq & K\int_0^{\min\{t,|h|^2\}}e^{(H+1)s}s^{-\frac{1}{2}}\|\bm{g}(t-s,\cdot)\|_{C_b(\R^d;\R^m)}ds\notag\\
% \leq & K\max\{1,e^{(H+1)T}\}\bigg (\int_0^{\min\{t,|h|^2\}}s^{-\frac{1}{2}}ds\bigg ) \|\bm{g}\|_{C_b([0,T]\times\R^d;\R^m)}\notag\\
\leq & Ke^{2|H_\nu|T}\bigg(\int_0^{|h|^2}s^{-\frac{1}{2}}ds\bigg ) \|\bm{g}\|_{C_b([0,T]\times\R^d;\R^m)}
= Ke^{2|H_\nu|T}|h|\|\bm{g}\|_{C_b([0,T]\times\R^d;\R^m)}.\label{ffa}
\end{align}

Observe that $\bm{b}_{t,h}$ is non-trivial only if $|h|^2<t$, so all the following computations are done in this case. To provide an estimate for $\bm{b}_{t,h}$, we use \eqref{obser-1}, with $\f=\bm{g}(t-s,\cdot)$ and $y\in\{x-h,x\}$, and
\eqref{aim} (with $\eps=|H_\nu|$) to estimate
\begin{align}
&|\bm{b}_{t,h}(x+2h)-2\bm{b}_{t,h}(x+h)+\bm{b}_{t,h}(x)|\notag\\
\le &\int_{\min\{|h|^2,t\}}^{t}|(D^1\bm{T}(s)\bm{g}(t-s,\cdot))(x+2h)-2(D^1\bm{T}(s)\bm{g}(t-s,\cdot))(x+h)+(D^1\bm{T}(t-s)\bm{g}(s,\cdot))(x)|ds\notag\\
\leq & K\bigg (\int_{|h|^2}^{t}e^{(H_\nu+|H_\nu|)s}s^{-\frac{3}{2}}ds\bigg )|h|^2\|\bm{g}\|_{C_b([0,T]\times\R^d;\R^m)}\notag\\
\le & Ke^{2|H_\nu|T}\bigg (\int_{\min\{|h|^2,t\}}^{\infty}s^{-\frac{3}{2}}ds\bigg )|h|^2\|\bm{g}\|_{C_b([0,T]\times\R^d;\R^m)}
= Ke^{2|H_\nu|T}|h|\|\bm{g}\|_{C_b([0,T]\times\R^d;\R^m)}.\label{ffb}
%\label{Stime_b_parziali}
\end{align}
Combining \eqref{Preserv_reg}, \eqref{ff}, \eqref{ffa} and \eqref{ffb} we get \eqref{Schauder_parab} for $\alpha=0$.
\smallskip

%Now, to obtain the desired estimate, we need to distinguish three cases depending on the position of the number $1$ with respect to $t$ and $|h|^2$. If $|h|^2<t\leq 1$, then
%\begin{align*}
 %   |\bm{b}(x+2h)-2\bm{b}(x+h)+\bm{b}(x)|&\leq c\max\{1,e^{HT}\}\left(\int_{|h|^2}^{t}s^{-3/2}ds\right)|h|^2\|\bm{g}\|_{C_b(\R^d;\R^m)}\\
 %   &= c\max\{1,e^{HT}\}\left(-2t^{-1/2}+2|h|^{-1}\right)|h|^2\|\bm{g}\|_{C_b(\R^d;\R^m)}\\
 %   &\leq 2c\max\{1,e^{HT}\}|h|\|\bm{g}\|_{C_b(\R^d;\R^m)}.
%\end{align*}
%If $|h|^2\leq 1\leq t$, then
%\begin{align*}
%    |\bm{b}(x+2h)-2\bm{b}(x+h)+\bm{b}(x)|&\leq c\max\{1,e^{HT}\}\left(\int_{|h|^2}^{1}s^{-3/2}ds+\int_{1}^{t}ds\right)|h|^2\|\bm{g}\|_{C_b(\R^d;\R^m)}\\
 %   &\leq c\max\{1,e^{HT}\}\left(-2+2|h|^{-1}+T\right)|h|^2\|\bm{g}\|_{C_b(\R^d;\R^m)}\\
  %  &\leq c(2+T)\max\{1,e^{HT}\}|h|\|\bm{g}\|_{C_b(\R^d;\R^m)}.
%\end{align*}
%If $1\leq |h|^2< t$, then
%\begin{align*}
 %   |\bm{b}(x+2h)-2\bm{b}(x+h)+\bm{b}(x)|&\leq c\max\{1,e^{HT}\}\left(\int_{|h|^2}^{t}ds\right)|h|^2\|\bm{g}\|_{C_b(\R^d;\R^m)}\\
 %   &\leq cT\max\{1,e^{HT}\}|h|^2\|\bm{g}\|_{C_b(\R^d;\R^m)}\\
 %   &\leq cT^{3/2}\max\{1,e^{HT}\}|h|\|\bm{g}\|_{C_b(\R^d;\R^m)}.
%\end{align*}

\noindent {\boldmath$(\alpha\in(0,1)).$} 
Adapting the arguments in the proof of Theorem \ref{thm_Schauder_elliptic}, it is easy to check that the function $\bm{w}(t,\cdot)$ belongs to $C^2_b(\R^d;\R^m)$
for every $t\in[0,T]$ and
\begin{align}\label{deriv_second_w}
(D^2\bm{w})(t,x)=\int_0^t(D^2\bm{T}(s)\bm{g}(t-s,\cdot))(x)ds,\qquad\;\, t\in[0,T],\ x\in\R^d.
    \end{align}
Moreover by \eqref{alphaelle}, there exists a positive constant $c=c(T,\alpha)$ such that
\begin{align}\label{FF}
\|\bm{w}\|_{C_b^{0,2}([0,T]\times\R^d;\R^m)}\leq c\|\bm{g}\|_{\mathcal{C}^{0,\alpha}_b([0,T]\times\R^d;\R^m)}.
\end{align}
    
In order to show that $\bm{w}$ belongs to $\mathcal{C}^{0,2+\alpha}_b([0,T]\times\R^d;\R^m)$ we fix
$t\in (0,T]$ and introduce the functions $\tilde{\bm{a}}_{t,h}$ and $\tilde{\bm{b}}_{t,h}$, defined by
\begin{align*}
\tilde{\bm{a}}_{t,h}(x):=\int_0^{\min\{t,|h|^2\}}(D^2\bm{T}(s)\bm{g}(t-s,\cdot))(x)ds,\qquad \;\,\tilde{\bm{b}}_{t,h}(x):=\int_{\min\{t,|h|^2\}}^{t}(D^2\bm{T}(s)\bm{g}(t-s,\cdot))(x)ds
\end{align*}
for every $x,h\in\R^d$ and $t\in[0,T]$. By \eqref{deriv_second_w}, we get
\begin{align*}
|D^{2}\bm{w}(x+h)-D^{2}\bm{w}(x)| &\leq |\tilde{\bm{a}}_{t,h}(x+h)-\tilde{\bm{a}}_{t,h}(x)|+ |\tilde{\bm{b}}_{t,h}(x+h)-\tilde{\bm{b}}_{t,h}(x)|.
\end{align*}
Using \eqref{teta1teta2infty} (with $\eps=|H_\nu|$) we have
\begin{align}
|\tilde{\bm{a}}_{t,h}(x+h)-\tilde{\bm{a}}_{t,h}(x)| &\leq \int_0^{\min\{t,|h|^2\}}|(D^2\bm{T}(s)\bm{g}(t-s,\cdot))(x+h)-(D^2\bm{T}(s)\bm{g}(t-s,\cdot))(x)|ds\notag\\
% &\leq K\bigg (\int_0^{|h|^2}e^{(H+|H|)s}s^{\frac{\alpha-2}{2}}ds\bigg )\|\bm{g}\|_{\mathcal{C}^{0,\alpha}_b([0,T]\times\R^d;\R^m)}\notag\\
&\leq Ke^{2|H_\nu|T}\bigg (\int_0^{|h|^2}s^{\frac{\alpha-2}{2}}ds\bigg )\|\bm{g}\|_{\mathcal{C}^{0,\alpha}_b([0,T]\times\R^d;\R^m)}\notag\\
&\leq Ke^{2|H_\nu|T}|h|^{\alpha}\|\bm{g}\|_{\mathcal{C}^{0,\alpha}_b([0,T]\times\R^d;\R^m)}.
\label{FFa}
\end{align}
Furthermore, observing that $\tilde{\bm{b}}_{t,h}$ is non-trivial only if $|h|^2<t$, using \eqref{teta1teta2infty} (with $\eps=|H_\nu|$), we can show that
\begin{align}
|\tilde{\bm{b}}_{t,h}(x+h)-\tilde{\bm{b}}_{t,h}(x)| & \leq \int_{|h|^2}^{t}|(D^2\bm{T}(s)\bm{g}(t-s,\cdot))(x+h)-(D^2\bm{T}(s)\bm{g}(t-s,\cdot))(x)|ds\notag\\
% & \leq K|h|\int_{|h|^2}^{t}\left\|D^3\bm{T}(s)\bm{g}(t-s,\cdot)\right\|_{C_b(\R^d;\R^{3md})}ds\notag\\
& \leq K\bigg (\int_{|h|^2}^{t}e^{(H_\nu+|H_\nu|)s} s^{\frac{\alpha-3}{2}}ds\bigg ) |h|\|\bm{g}\|_{\mathcal{C}^{0,\alpha}_b([0,T]\times\R^d;\R^m)}\notag\\
&\leq Ke^{2|H_\nu|T} |h|^{\alpha}\|\bm{g}\|_{\mathcal{C}^\alpha_b([0,T]\times\R^d;\R^m)}. \label{FFb}
\end{align}
Combining \eqref{Preserv_reg}, \eqref{FF}, \eqref{FFa} and \eqref{FFb} we get \eqref{Schauder_parab} for any $\alpha\in(0,1)$.

To complete the proof, we need to show that $\bm{v}$ is the unique bounded classical solution to the Cauchy problem \eqref{para-nonhom}. For this purpose, we observe that the previous computations and the properties of the semigroup $\{\bm{T}(t)\}_{t\ge 0}$ show that
\begin{eqnarray*}
(\bm{\A}\bm{w})(t,x)=\int_0^t(\bm{\A}\bm{T}(t-s)\g(s,\cdot))(x)ds=\int_0^t(D_t\bm{T}(t-s)\g(s,\cdot))(x)ds,\qquad\;\,t\in [0,T],\;\,x\in\R^d.    
\end{eqnarray*}
Moreover, applying the dominated convergence theorem, together with estimate \eqref{pointwise} it can be shown that $\bm{w}$ is continuously differentiable in $[0,T]\times\Rd$ and 
\begin{eqnarray*}
(D_t\bm{w})(t,x)=\int_0^tD_t\bm{T}(t-s)\g(s,\cdot))(x)ds+\bm{g}(t,x),\qquad\;\,t\in [0,T],\;\,x\in\R^d.    
\end{eqnarray*}
Hence, $\bm{w}$ is a classical solution to the equation $D_t\bm{w}=\bm{\A}\bm{w}+\bm{g}$ in $[0,T]\times\Rd$. Moreover, $\bm{w}(0,\cdot)=\bm{0}$.
As a byproduct, the function $\bm{v}$, defined in \eqref{Sol_Mild_Parab}, is a locally in time bounded classical solution to the Cauchy problem \eqref{para-nonhom}. The uniqueness of the locally in time bounded classical solution to \eqref{para-nonhom} follows from Theorem \ref{exi_thm}.
\end{proof}

\section{Regularity results in $L^p(\Rd;\Rm)$}
\label{sect-5}

In this section we focus our attention to the $L^p$-setting. As the scalar case shows, Cauchy problems associated to elliptic and parabolic equations with unbounded coefficients are, in general, not well posed in $L^p$-spaces of the Lebesgue measure, despite the relevance of such spaces, unless restrictive assumptions are considered. In some cases, the classical $L^p$-spaces are not preserved by the action of the semigroup (see \cite{AngLorCom} for further details). Here, we are interested in studying properties of $\bm{T}(t)$ in $L^p(\Rd;\Rm)$ when this space is preserved by its action and when an estimate like
\begin{equation*}
\|\bm{T}(t)\f\|_{L^p(\Rd;\Rm)}\le c_p(t)\|\f\|_{L^p(\Rd;\Rm)},\qquad\;\,t\in (0,\infty),
\end{equation*}
holds true for some measurable function $c_p:(0,\infty)\to(0,\infty)$.
To this aim, we assume the following
additional hypothesis.

\begin{hyp}\label{ipo-p}
%There exists $\nu\in (0,2)$ such that 
The restriction of $\{S_{\nu}(t)\}_{t\ge 0}$ to $C_c(\R^d)$ extends to $L^1(\Rd)$ with a strongly continuous semigroup. 
\end{hyp}

\begin{remark}%\label{L1-bound}
{\rm  Sufficient conditions that ensure the validity of Hypothesis \ref{ipo-p} can be found in \cite[Theorems 5.3 and 5.4]{AngLorCom}. For instance, one can assume the existence of a positive constant $K$ such that
\begin{equation} 
\nu\Lambda_C(x)-(\diver{{\bm b}})(x)+\sum_{i,j=1}^dD_{ij}q_{ij}(x)\le K \qquad\;\, x \in \Rd.
\label{cond-diver}
\end{equation}
In this case $\|S_{\nu}(t)f\|_{L^1(\Rd)}\le e^{Kt}\|f\|_{L^1(\Rd)}$ for any $t>0$ and any $f \in L^1(\Rd)$. }
\end{remark}

From here onward Hypothesis \ref{ipo-p} will be our standing assumption (we still continue to assume Hypotheses \ref{base} as before).

The following result can be proved by adapting the arguments in \cite{ALP}. For the reader’s convenience, we provide a sketch of the proof.

\begin{pro}\label{towardslp}
Under Hypothesis $\ref{ipo-p}$, the restriction of the semigroup $\{\bm{T}(t)\}_{t\ge 0}$ to $C_c(\R^d;\R^m)$ extends to  
$L^p(\Rd;\Rm)$, for any $p \in [2, \infty)$, with a strongly continuous semigroup, which we still denote by $\{\bm{T}(t)\}_{t\ge0}$.

If Hypotheses $\ref{hyp-derivata-1}$ are satisfied then for any $\f \in W^{k,p}(\Rd;\Rm)$ and $t\in (0,\infty)$, $\bm{T}(t)\f$ belongs to $W^{\ell,p}(\Rd;\Rm)$ for any $k,\ell\in \{0,1\}$ with $k \le \ell$. Moreover, for any $\varepsilon>0$ and $k,\ell$ as above, there exists a positive constant $c_{\varepsilon}$, that blows up as $\eps$ approaches zero, such that
\begin{equation}\label{sob_est}
\|{\bm T}(t)\f\|_{W^{\ell,p}(\Rd;\Rm)} \le  c_{\varepsilon}e^{(\tilde{H}_p+\varepsilon)t}t^{-\frac{\ell-k}{2}}\|S_{\nu}(t)\|_{\mathcal{L}(L^1(\Rd))}^{\frac{1}{p}}\|\f\|_{W^{k,p}(\Rd;\Rm)},\qquad\;\,t\in (0,\infty);
\end{equation}
here, $\tilde{H}_p=\frac{1}{2}H+(\frac{1}{2}-\frac{1}{p})\Theta_C$, where $H$ is the constant appearing in Hypotheses \ref{base}(iii) and $\Theta_C$ is the supremum over $\Rd$ of the function $\Lambda_C$. In particular, the function $t\mapsto\bm{T}(t)\f$ is continuous in $(0,\infty)$ with values in $W^{\ell,p}(\R^d;\R^m)$ for every $\f\in L^p(\R^d;\R^m)$.
Further, if Hypotheses $\ref{hyp-derivata-2}$ $($resp. Hypotheses $\ref{hyp-derivata-3})$ are satisfied, instead of Hypotheses \ref{hyp-derivata-1} then, for every $t\in (0,\infty)$, $\bm{T}(t)\f$ belongs to $W^{\ell,p}(\Rd;\Rm)$ with $k,\ell\in \{0,1,2\}$ $($resp. $k,\ell\in \{0,1,2,3\})$ and $k\leq\ell$, estimate \eqref{sob_est} is satisfied for such values of $k$ and $\ell$, and the function $t\mapsto\bm{T}(t)\f$ is continuous in $(0,\infty)$ with values in $W^{\ell,p}(\R^d;\R^m)$ for every $\f\in L^p(\R^d;\R^m)$.
\end{pro}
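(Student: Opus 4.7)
The plan is to convert the pointwise gradient estimate \eqref{pointwise} into an $L^p$-bound via Hypothesis \ref{ipo-p} (which upgrades $S_{\nu}(t)$ to a bounded operator on $L^1$) combined with a H\"older trick that commutes the power $p/2$ past the semigroup $S_{\nu}(t)$; after that, $\bm{T}(t)$ is extended from $C_c^\infty(\Rd;\Rm)$ to $L^p(\Rd;\Rm)$ by density, strong continuity is obtained through a tightness argument that relies on the $L^1$-strong continuity of $\{S_{\nu}(t)\}_{t\ge 0}$, and the smoothing estimate for $\f\in W^{k,p}(\Rd;\Rm)$ is recovered by approximation together with a distributional identification of limits.

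\textbf{Core estimate.} Fix admissible $k\le\ell$ and $\f\in C_c^{\infty}(\Rd;\Rm)$. Setting $g:=\sum_{j=0}^k|D^j\f|^2$ and combining \eqref{pointwise} with the integral representation $S_{\nu}(t)g(x)=\int_{\Rd} g(y)p(t,x,dy)$ together with the mass bound $p(t,x,\Rd)\le e^{\Theta_C t}$ from Proposition \ref{prop-2.5}, H\"older's inequality with exponents $p/2$ and $p/(p-2)$ yields the pointwise inequality $(S_{\nu}(t)g)^{p/2}(x)\le e^{\Theta_C t(p/2-1)}(S_{\nu}(t)g^{p/2})(x)$. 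Since $g^{p/2}\le (k+1)^{p/2-1}\sum_{j=0}^k|D^j\f|^p$, integrating over $\Rd$ and invoking the $L^1$-bound for $S_{\nu}(t)$ produces
\begin{equation*}
\|D^{\ell}\bm{T}(t)\f\|_{L^p(\Rd)}\le C\,e^{\tilde{H}_p t}\max\{t^{-(\ell-k)/2},1\}\,\|S_{\nu}(t)\|_{\mathcal{L}(L^1(\Rd))}^{1/p}\|\f\|_{W^{k,p}(\Rd;\Rm)},
\end{equation*}
with exactly the exponent $\tilde{H}_p=H/2+(1/2-1/p)\Theta_C$ appearing in the statement. For $t\ge 1$ the factor $\max\{t^{-(\ell-k)/2},1\}$ is absorbed into $c_\varepsilon e^{\varepsilon t}t^{-(\ell-k)/2}$ via $c_\varepsilon:=\sup_{t\ge 1}t^{(\ell-k)/2}e^{-\varepsilon t}<\infty$, so that \eqref{sob_est} follows by summing the analogous bounds over $j\in\{0,\ldots,\ell\}$.

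\textbf{Extension and strong continuity.} The case $k=\ell=0$ above, together with the density of $C_c^\infty(\Rd;\Rm)$ in $L^p(\Rd;\Rm)$ for $p<\infty$, permits us to extend $\bm{T}(t)|_{C_c^\infty}$ uniquely to a bounded operator on $L^p(\Rd;\Rm)$; the semigroup law transfers to the extension by density through the local boundedness of $\|\bm{T}(t)\|_{\mathcal{L}(L^p)}$. For strong continuity at $t=0$, I would fix $\f\in C_c^{\infty}(\Rd;\Rm)$ with $\supp\f\subset B(0,R)$ and $R'>R$: on $B(0,R')$ the pointwise convergence $\bm{T}(t)\f\to\f$ together with the uniform sup-norm majorant given by Theorem \ref{exi_thm} yields $\int_{B(0,R')}|\bm{T}(t)\f-\f|^p\,dx\to 0$ by dominated convergence, while on $\Rd\setminus B(0,R')$ (where $\f$ vanishes) the same H\"older trick gives
\begin{equation*}
\int_{|x|>R'}|\bm{T}(t)\f|^p\,dx\le C\int_{|x|>R'}(S_{\nu}(t)|\f|^p)(x)\,dx\le C\|S_{\nu}(t)|\f|^p-|\f|^p\|_{L^1(\Rd)},
\end{equation*}
which vanishes as $t\to 0^+$ by the $L^1$-strong continuity of $\{S_{\nu}(t)\}_{t\ge 0}$ granted by Hypothesis \ref{ipo-p}. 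Density then concludes the strong continuity on all of $L^p(\Rd;\Rm)$.

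\textbf{Smoothing on $W^{k,p}$ and continuity in $t$.} For $\f\in W^{k,p}(\Rd;\Rm)$, pick $\f_n\in C_c^{\infty}(\Rd;\Rm)$ with $\f_n\to\f$ in $W^{k,p}$; then $\bm{T}(t)\f_n\to\bm{T}(t)\f$ in $L^p$ while the core estimate bounds $(D^{\ell}\bm{T}(t)\f_n)_n$ in $L^p$. A weakly convergent subsequence $D^{\ell}\bm{T}(t)\f_{n_j}\rightharpoonup\bm{v}$ in $L^p$ is identified with the weak derivative $D^{\ell}\bm{T}(t)\f$ by integration by parts against $\psi\in C_c^{\infty}(\Rd)$, and \eqref{sob_est} passes to the limit by lower semicontinuity of the $L^p$-norm. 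Finally, for $\f\in L^p(\Rd;\Rm)$ the $W^{\ell,p}$-continuity of $t\mapsto\bm{T}(t)\f$ on $(0,\infty)$ follows from
\begin{equation*}
\bm{T}(t+h)\f-\bm{T}(t)\f=\bm{T}(t\wedge(t+h))\big(\bm{T}(|h|)\f-\f\big),
\end{equation*}
the strong $L^p$-continuity at $0$, and the uniform boundedness of $\bm{T}(s)\colon L^p\to W^{\ell,p}$ for $s$ in compact subsets of $(0,\infty)$ given by the case $k=0$ of \eqref{sob_est}. The hardest part will be precisely this identification step: one must keep careful track that the $L^p$-extension of $\bm{T}(t)$ agrees, at the level of distributional derivatives, with the classical pointwise semigroup on which the estimates of Section \ref{sect-3} were established, so that pointwise information available only on smooth functions can legitimately be transferred to arbitrary Sobolev data.
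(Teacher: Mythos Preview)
Your argument is correct and largely parallels the paper's proof: both derive the $L^p$-bound from the pointwise estimate \eqref{pointwise} by commuting the power $p/2$ past $S_{\nu}(t)$ via H\"older/Jensen against the kernel $p(t,x,dy)$ (using the mass bound $p(t,x,\Rd)\le e^{\Theta_C t}$ from Proposition \ref{prop-2.5}), and both obtain the Sobolev smoothing and the $W^{\ell,p}$-continuity in $t$ by density together with the semigroup law.

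The one genuine divergence is in the strong continuity at $t=0$. The paper uses the identity $\bm{T}(t)\f-\f=\int_0^t\bm{T}(r)\bm{\mathcal{A}}\f\,dr$ for $\f\in C_c^2(\Rd;\Rm)$, which gives directly $\|\bm{T}(t)\f-\f\|_{L^p}\le \|\bm{\mathcal{A}}\f\|_{L^p}\sup_{r\in(0,1]}\|\bm{T}(r)\|_{\mathcal{L}(L^p)}\int_0^t dr\to 0$, followed by density. You instead split $\Rd$ into a ball containing $\supp\f$ and its complement, handling the ball by dominated convergence and the tail by the $L^1$-strong continuity of $\{S_{\nu}(t)\}_{t\ge 0}$ as a tightness device. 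Both are valid; the paper's route is shorter and uses only the $L^1$-boundedness of $S_{\nu}(t)$, whereas yours avoids requiring $\f\in C^2$ but actually exploits the full strong continuity statement in Hypothesis \ref{ipo-p}. A minor simplification in your last part: since the core estimate is linear in $\f$, the sequence $(D^{\ell}\bm{T}(t)\f_n)_n$ is already Cauchy in $L^p$, so the weak-compactness/lower-semicontinuity step is unnecessary.
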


\begin{proof}
Estimate \eqref{cossali} shows that, for any $\f \in C_c(\Rd;\Rm)$, %the classical solution $\uu$ to the Cauchy problem \eqref{cp} satisfies the estimate
\begin{equation}\label{p=2}
|({\bm T}(t)\f)(x)|^2\le e^{Ht}(S_{\nu}(t)|\f|^2)(x), \qquad\;\, t\in (0,\infty),\;\, x \in \Rd.
\end{equation}
Thanks to \eqref{int_rep}  
and the H\"older inequality, we can extend estimate \eqref{p=2} to any $p>2$. Indeed, by the Jensen inequality, it holds 
\begin{equation*}
|({\bm T}(t)\f)(x)|^p\le  \big (e^{Ht}(S_{\nu}(t)|\f|^2)(x)\big )^{\frac{p}{2}}= e^{(\frac{p}{2}H+\frac{p-2}{2}\Theta_C)t}(S_{\nu}(t)|\f|^p)(x), \qquad\; t\in (0,\infty),\;\, x \in \Rd,
\end{equation*}
Integrating the previous inequality and using Hypothesis \ref{ipo-p}, we deduce that
\begin{equation}\label{norm-est}
\|\bm{T}(t)\f\|_{L^p(\R^d;\R^m)}\le e^{\tilde{H}_pt}\|S_{\nu}(t)\|_{\mathcal{L}(L^1(\Rd))}^{\frac{1}{p}}\|\f\|_{L^p(\R^d;\R^m)},\qquad\;\, t\in (0,\infty).
\end{equation}
Estimate \eqref{norm-est} and the density of $C_c(\Rd;\Rm)$ in $L^p(\Rd;\Rm)$ allow us to extrapolate $\bm{T}(t)$ to the $L^p$-scale, for $p\in [2,\infty)$.

The strong continuity of the extrapolated semigroup can be proved starting from the equality
\begin{eqnarray*}
({\bm T}(t)\f)(x)-\f(x)=\int_0^t ({\bm T}(r) {\bm \A}\f)(x)dr, \qquad\;\, t\in (0,\infty),\;\, x \in \Rd,
\end{eqnarray*}
which holds true for any $\f\in C^2_c(\Rd;\Rm)$, and that, together with \eqref{norm-est}, yields 
\begin{eqnarray*}
\|{\bm T}(t)\f-\f\|_{L^p(\Rd;\Rm)}\le \|{\bm \A}\f\|_{L^p(\Rd;\Rm)}\sup_{\tau\in (0,1]}\|S_{\nu}(\tau)\|_{\mathcal{L}(L^1(\Rd))}^{\frac{1}{p}}
\int_0^t e^{\tilde{H}_pr}dr,\qquad\;\,t\in (0,1].
\end{eqnarray*}
Since the right hand side of the above inequality vanishes as $t$ approaches zero, ${\bm T}(t)\f$ converges to $\f$ in $L^p(\Rd;\Rm)$ as $t$ tends to zero. A standard density argument allows us to get the same result for $\f \in L^p(\Rd;\Rm)$. 

The space regularity properties of the function $\bm{T}(t)\f$ for every $t\in (0,\infty)$ and estimates \eqref{sob_est} are immediate consequence of \eqref{pointwise} and Hypothesis \ref{ipo-p}.

Finally, the continuity of the map $t\mapsto\bm{T}(t)\f$ in $(0,\infty)$ with values in $W^{\ell,p}(\R^d,\R^m)$ (where $\ell$ depends on which among Hypotheses \ref{hyp-derivata-1}, \ref{hyp-derivata-2} and \ref{hyp-derivata-3} are assumed to be true), when $\f\in L^p(\R^d;\R^m)$, follows from the strong continuity of the semigroup $\{\bm{T}(t)\}_{t\ge 0}$ observing that, for every $t_0\in (0,\infty)$ it holds that
\begin{eqnarray*}
\|\bm{T}(t)\f-\bm{T}(t_0)\f\|_{W^{\ell,p}(\R^d;\R^m)}
\le\|\bm{T}(t_0)\|_{\mathcal{L}(L^p(\R^d;\R^m);W^{\ell,p}(\R^d;\R^m))}\|\bm{T}(t-t_0)\f-\f\|_{L^p(\R^d;\R^m)},
\end{eqnarray*}
for every $t\in (t_0,\infty)$ and 
\begin{align*}
&\|\bm{T}(t)\f-\bm{T}(t_0)\f\|_{W^{\ell,p}(\R^d;\R^m)}
\\
&\qquad\qquad\le 
\left(\sup_{t\in [0,t_0]}\|\bm{T}(t)\|_{\mathcal{L}(L^p(\R^d;\R^m);W^{\ell,p}(\R^d;\R^m))}\right)\|\bm{T}(t_0-t)\f-\f\|_{L^p(\R^d;\R^m)},
\end{align*}
for every $t\in (0,t_0)$.
\end{proof}

Thanks to Proposition \ref{towardslp} the extrapolated semigroups in the $L^p$-setting are consistent, i.e. for any $p,q\in[2,\infty)$,  $t\in (0,\infty)$ and $\f\in L^p(\Rd;\Rm)\cap L^q(\Rd;\Rm)$ it holds that ${\bm T}_p(t)\f=\bm{ T}_q(t)\f$ almost everywhere in $\Rd$. For this reason, from now on, ${\bm T}(t)$ will denote the continuous extension of the semigroup generated in $C_b(\Rd;\Rm)$ in the $L^p$-scale.

By \eqref{int_p}, \eqref{int_pp}, \eqref{sob_est} and arguing as in the proof of Theorem \ref{thm-1-2} we get the following result.

\begin{thm}
Let Hypotheses $\ref{hyp-derivata-1}$ be satisfied. For any $t>0$, ${\bm T}(t)$ is a bounded linear operator from $B^{\theta_1}_{p,p}(\Rd;\Rm)$ into $B^{\theta_2}_{p,p}(\Rd;\Rm)$ for any $0 \le \theta_1\le \theta_2 \leq 1$, and 
\begin{equation}\label{teta1teta2}
\|{\bm T}(t)\f\|_{B^{\theta_2}_{p,p}(\Rd;\Rm)}\le c_{\varepsilon}e^{(\tilde{H}_p+\eps)t}t^{-\frac{\theta_2-\theta_1}{2}}\|S_{\nu}(t)\|_{\mathcal{L}(L^1(\Rd))}^{\frac{1}{p}}\|\f\|_{B^{\theta_1}_{p,p}(\Rd;\Rm)},\qquad\;\,t\in (0,\infty),
\end{equation}
for any $\f \in B^{\theta_1}_{p,p}(\Rd;\Rm)$, any $\varepsilon>0$ and some positive constant $c_{\varepsilon}$, independent of $\f$, that blows up as $\eps$ approaches zero. Here, $\tilde{H}_p$ is the constant in Proposition $\ref{towardslp}$. 
 If Hypotheses $\ref{hyp-derivata-1}$ are replaced by Hypotheses $\ref{hyp-derivata-2}$ $($resp. Hypotheses $\ref{hyp-derivata-3})$, then, for any $t>0$, ${\bm T}(t)$ is a bounded linear operator from $B^{\theta_1}_{p,p}(\Rd;\Rm)$ in $B^{\theta_2}_{p,p}(\Rd;\Rm)$ for any $0 \le \theta_1\le \theta_2 \leq 2$ $($resp. for any $0 \le \theta_1\le \theta_2 \leq 3)$ and estimate \eqref{teta1teta2} holds true for any $\theta_1$ and $\theta_2$ as above.
 
Moreover, if Hypotheses $\ref{hyp-derivata-1}$ $($Hypotheses $\ref{hyp-derivata-2}$ or Hypotheses $\ref{hyp-derivata-3}$, respectively$)$ hold true, then for $\ell=1$ $(\ell\in\{1,2\}$ or $\ell\in \{1,2,3\}$, respectively$)$ and $t>0$, ${\bm T}(t)$ is a bounded linear operator from $B^{\theta_1}_{p,p}(\Rd;\Rm)$ to $W^{\ell,p}(\Rd;\Rm)$, for any $\theta_1 \in(0,\ell)$ and it satisfies, for any $\eps>0$, the estimate
 \begin{equation*}%\label{alphaelleLp}
\|{\bm T}(t)\f\|_{W^{\ell,p}(\Rd;\Rm)}\le c_{\varepsilon}e^{(\tilde{H}_p+\eps)t}t^{-\frac{\ell-\theta_1}{2}}\|S_{\nu}(t)\|_{\mathcal{L}(L^1(\R^d))}^{\frac{1}{p}}\|\f\|_{B^{\theta_1}_{p,p}(\Rd;\Rm)},\qquad\;\,t\in (0,\infty),
\end{equation*}
where $c_{\varepsilon}$ and $\tilde{H}_p$ have the same meaning as above.
\end{thm}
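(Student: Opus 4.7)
The plan is to mirror the proof of Theorem \ref{thm-1-2} step by step, replacing the interpolation identity \eqref{interpolation_uguaglianza_1} (and the Hölder reiteration) with its $L^p$ analogues \eqref{int_p} and \eqref{int_pp}, and replacing the pointwise estimate \eqref{aim} with its $L^p$ counterpart \eqref{sob_est}. Throughout, it will suffice to detail the argument under Hypotheses \ref{hyp-derivata-3} (so one may take $\ell=3$); the versions under Hypotheses \ref{hyp-derivata-1}--\ref{hyp-derivata-2} require only the obvious changes in the value of $\ell$.

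The core of the argument is a three-stage interpolation. First, for fixed $t>0$, combining \eqref{sob_est} with $k=\ell=0$ and with $k=0,\ell=3$ gives the two endpoint operator norm bounds for $\bm{T}(t):L^p\to L^p$ and $\bm{T}(t):L^p\to W^{3,p}$, respectively; Theorem \ref{thm_operator_interpolation} with $(X_0,X_1)=(L^p,L^p)$ and $(Y_0,Y_1)=(L^p,W^{3,p})$, combined with the identification \eqref{int_p} on the target side, then produces a bound for $\bm{T}(t):L^p\to B^{3\theta}_{p,p}$ of the form $c_\eps e^{(\tilde H_p+\eps)t}t^{-3\theta/2}\|S_\nu(t)\|_{\mathcal L(L^1)}^{1/p}$ for every $\theta\in(0,1)$. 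Next, taking $(X_0,X_1)=(Y_0,Y_1)=(L^p,W^{3,p})$ and using \eqref{sob_est} with $k=\ell=3$ for the higher endpoint, the same interpolation yields boundedness of $\bm{T}(t)$ on $B^{3\theta}_{p,p}$ with no singular factor in $t$. A final application of Theorem \ref{thm_operator_interpolation}, now with $(X_0,X_1)=(L^p,B^{3\theta}_{p,p})$ and $Y_0=Y_1=B^{3\theta}_{p,p}$, combined with the reiteration identity \eqref{int_pp} to realize $(L^p,B^{3\theta}_{p,p})_{\alpha,p}=B^{3\alpha\theta}_{p,p}$, delivers boundedness of $\bm{T}(t):B^{3\alpha\theta}_{p,p}\to B^{3\theta}_{p,p}$ with constant of the correct form for any $\alpha\in(0,1)$. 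Choosing $\theta=\theta_2/3$ and $\alpha=\theta_1/\theta_2$ recovers \eqref{teta1teta2} when $0<\theta_1<\theta_2\le 3$; the diagonal case $\theta_1=\theta_2$ is covered by the second stage, and the endpoint $\theta_1=0$ by the first.

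The Besov-to-Sobolev estimate follows from a single interpolation step: \eqref{sob_est} provides bounds for $\bm{T}(t):W^{\ell,p}\to W^{\ell,p}$ and for $\bm{T}(t):L^p\to W^{\ell,p}$, and Theorem \ref{thm_operator_interpolation} applied with $(X_0,X_1)=(L^p,W^{\ell,p})$, $Y_0=Y_1=W^{\ell,p}$, together with \eqref{int_p} and the choice of parameter $\theta_1/\ell$, yields the stated inequality. No serious obstacle is expected, since the architecture of the proof of Theorem \ref{thm-1-2} transfers verbatim. The only point requiring attention is the bookkeeping of the extra multiplicative factor $\|S_\nu(t)\|_{\mathcal L(L^1)}^{1/p}$: it is present with the same exponent in each endpoint bound supplied by \eqref{sob_est}, so the multiplicative nature of the interpolation bound $M_0^{1-\theta}M_1^\theta$ in Theorem \ref{thm_operator_interpolation} propagates it unchanged through every stage of the argument.
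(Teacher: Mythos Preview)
Your proposal is correct and follows exactly the route the paper takes: the authors state that the result follows ``by \eqref{int_p}, \eqref{int_pp}, \eqref{sob_est} and arguing as in the proof of Theorem \ref{thm-1-2}'', which is precisely the three-stage interpolation scheme you describe, with the same choices of endpoint spaces and parameters $\theta=\theta_2/3$, $\alpha=\theta_1/\theta_2$. Your remark that the factor $\|S_\nu(t)\|_{\mathcal L(L^1)}^{1/p}$ passes unchanged through each interpolation step (being common to both endpoint bounds) is the only new bookkeeping relative to Theorem \ref{thm-1-2}, and it is correct.
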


Estimates \eqref{sob_est} and the Sobolev embedding theorem allow also to deduce a hypercontractivity type result for the semigroup and its derivatives.

\begin{thm}
Under Hypotheses $\ref{hyp-derivata-1}$, ${\bm T}(t)$ belongs to $\mathcal{L}(L^p(\Rd;\Rm); W^{1,r}(\Rd;\Rm))$ for any $p \le r \le \infty$ and any $t\in (0,\infty)$. More precisely, ${\bm T}(t)\f$ belongs to $C^1_b(\Rd;\Rm)$ for any $t>0$. If Hypotheses $\ref{hyp-derivata-1}$ are replaced by Hypotheses $\ref{hyp-derivata-2}$ $($resp. Hypotheses $\ref{hyp-derivata-3})$ then ${\bm T}(t)\in \mathcal{L}(L^p(\Rd;\Rm); W^{2,r}(\Rd;\Rm))$ for every $t>0$ and $p \le r \le \infty$ $($resp. ${\bm T}(t)\in \mathcal{L}(L^p(\Rd;\Rm); W^{3,r}(\Rd;\Rm))$ for every $t>0$ and $p \le r \le \infty$$)$. More precisely, ${\bm T}(t)\f$ belongs to $C^2_b(\Rd;\Rm)$ $($resp. $C^3_b(\Rd;\Rm))$ for every $t>0$.
\end{thm}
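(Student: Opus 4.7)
I would prove all three cases simultaneously by a bootstrap argument combining the smoothing estimate \eqref{sob_est} with the Sobolev embedding theorem. Write $\ell_0\in\{1,2,3\}$ for the maximal order of derivatives available under the active hypothesis (so $\ell_0=1,2,3$ under Hypotheses \ref{hyp-derivata-1}, \ref{hyp-derivata-2}, \ref{hyp-derivata-3}, respectively), and fix $p\in[2,\infty)$, $t\in(0,\infty)$ and $\f\in L^p(\R^d;\R^m)$.

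First I would construct a finite chain of exponents $p=p_0<p_1<\cdots<p_{N-1}$ with $\ell_0 p_{N-1}>d$: starting from $p_0=p$, define $p_{i+1}$ by $\frac{1}{p_{i+1}}=\frac{1}{p_i}-\frac{\ell_0}{d}$ as long as $\ell_0 p_i<d$, passing to any $p_{i+1}\in(d/\ell_0,\infty)$ in the borderline case $\ell_0 p_i=d$. Since the inverse exponent decreases by $\ell_0/d>0$ at each step, the chain reaches the Morrey range after finitely many steps $N=N(p,d,\ell_0)$. Next I would split $t=\tau_0+\tau_1+\cdots+\tau_N$ into $N+1$ equal parts, set $\bm{u}_0=\f$ and $\bm{u}_{i+1}=\bm{T}(\tau_i)\bm{u}_i$, so that $\bm{u}_{N+1}=\bm{T}(t)\f$, and iterate on $i$. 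For every $i\le N-1$, estimate \eqref{sob_est} with $k=0$ and $\ell=\ell_0$ places $\bm{u}_{i+1}$ in $W^{\ell_0,p_i}(\R^d;\R^m)$ with a quantitative bound in terms of $\|\bm{u}_i\|_{L^{p_i}(\R^d;\R^m)}$, and the Sobolev embedding $W^{\ell_0,p_i}(\R^d;\R^m)\hookrightarrow L^{p_{i+1}}(\R^d;\R^m)$ yields $\bm{u}_{i+1}\in L^{p_{i+1}}(\R^d;\R^m)$. At the final step $i=N-1$, Morrey's embedding $W^{\ell_0,p_{N-1}}(\R^d;\R^m)\hookrightarrow C_b(\R^d;\R^m)$ applies and produces $\bm{u}_N\in C_b(\R^d;\R^m)$. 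One last application of \eqref{aim} with $k=0$ and $\ell=\ell_0$ to $\bm{T}(\tau_N)\bm{u}_N$ then gives $\bm{T}(t)\f=\bm{T}(\tau_N)\bm{u}_N\in C^{\ell_0}_b(\R^d;\R^m)$, which is the ``more precisely'' assertion.

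For the operator-theoretic statement $\bm{T}(t)\in\mathcal{L}(L^p(\R^d;\R^m);W^{\ell_0,r}(\R^d;\R^m))$ for $r\in[p,\infty]$, the endpoint $r=p$ is \eqref{sob_est} itself (with $k=0$, $\ell=\ell_0$), whereas the endpoint $r=\infty$ is obtained by composing the quantitative operator bounds $L^{p_i}\to W^{\ell_0,p_i}\hookrightarrow L^{p_{i+1}}$ through the $N$ iterations above and concluding with $L^{p_{N-1}}\to W^{\ell_0,p_{N-1}}\hookrightarrow C_b\subset W^{\ell_0,\infty}$, plus a final application of \eqref{aim} to produce the $C^{\ell_0}_b$ bound. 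The intermediate values $r\in(p,\infty)$ then follow either via Theorem \ref{thm_operator_interpolation} applied on the scale $W^{\ell_0,r}$, or by the elementary H\"older interpolation $\|\bm{g}\|_{L^r(\R^d;\R^m)}\le\|\bm{g}\|_{L^p(\R^d;\R^m)}^{p/r}\|\bm{g}\|_{L^\infty(\R^d;\R^m)}^{1-p/r}$ applied separately to $\bm{T}(t)\f$ and to each of its spatial derivatives up to order $\ell_0$.

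The main obstacle I expect is bookkeeping. The chain constant depends on $N$, $d$, $p$ and on the inverse powers of $\tau_i$ coming from \eqref{sob_est} at each iteration, and one has to handle the borderline Sobolev case $\ell_0 p_i=d$ in which the embedding only reaches every finite exponent; this is circumvented by the auxiliary intermediate step described above, exploiting that $W^{\ell_0,d/\ell_0}\hookrightarrow L^q$ for every $q\in[d/\ell_0,\infty)$. A further point to verify is that each $\bm{u}_{i+1}$ really admits a bounded and, at the end of the chain, continuous representative: this is guaranteed by the consistency of the extrapolated $L^p$-semigroups proved in Proposition \ref{towardslp} together with the fact that, for $\tau_i>0$, $\bm{T}(\tau_i)\bm{u}_i$ is a classical solution of the parabolic system (see Theorem \ref{exi_thm} and Remark \ref{rmk-1}), so its Sobolev representative coincides with a classical one.
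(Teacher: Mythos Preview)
Your bootstrap argument is correct and essentially identical to the paper's own proof, which also iterates the smoothing estimate \eqref{sob_est} together with the Sobolev embedding to climb the integrability scale until the Morrey range is reached, and then finishes with one more semigroup step (the paper lands in $\mathcal{C}^{1-d/p}_b$ and concludes via \eqref{teta1teta2infty}, whereas you land in $C_b$ and conclude via \eqref{aim}---equivalent routes). The written inclusion ``$C_b\subset W^{\ell_0,\infty}$'' is a slip, but the intended argument (embed into $C_b$, then apply \eqref{aim} to reach $C^{\ell_0}_b\subset W^{\ell_0,\infty}$) is spelled out elsewhere in your proposal and is correct.
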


\begin{proof} 
We just prove the first assertion of the statement since the remaining parts can be proved analogously. For any $\f \in L^p(\Rd;\Rm)$ and $t\in (0,\infty)$, by \eqref{sob_est} with $(k,\ell)=(0,1)$ we deduce that ${\bm T}(t)\f$ belongs to $W^{1,p}(\Rd;\Rm)$. If $p>d$, then, by the Sobolev embedding theorem, ${\bm T}(t)\f \in \mathcal{C}^{1-\frac{d}{p}}_b(\Rd;\Rm)$. The semigroup law and estimate \eqref{teta1teta2infty} with $\theta_1=1-\frac{d}{p}$ and $\theta_2=1$ allow us to conclude. If $p=d$ then, by the Sobolev embedding theorem, ${\bm T}(t)\f$ belongs to $L^q(\Rd;\Rm)$ for any $q \in [p,\infty)$ and $t>0$. The semigroup law and estimate \eqref{sob_est} again with $(k,\ell)=(0,1)$ imply that ${\bm T}(t)\f$ belongs to $W^{1,q}(\Rd;\Rm)$ for any $q\in [p,\infty)$. In this case, arguing as in the case $p>d$ we get the claim.
Finally, if $p<d$, the Sobolev embedding theorem yields that ${\bm T}(t)\f \in L^{p_1}(\Rd;\Rm)$ where $p_1$ denotes the Sobolev conjugate of $p$, i.e. $p_1=p+\frac{p^2}{d-p}$. Using the semigroup law and estimate  \eqref{sob_est} again with $(k,\ell)=(0,1)$ we infer that ${\bm T}(t)\f \in W^{1,p_1}(\Rd;\Rm)$. Iterating this procedure we obtain that ${\bm T}(t)\f \in W^{1,p_n}(\Rd;\Rm)$ where 
\begin{eqnarray*}
p_n=p_{n-1}+\frac{p_{n-1}^2}{d-p_{n-1}}
\end{eqnarray*}
for every value of $n\in\N$ such that $p_{n-1}<d$.
Clearly, after a finite number of steps $p_n>d$ and the argument used in the first case ($p>d$) allows us to conclude. 
\end{proof}

Now, we are interested in proving quasi optimal $L^p$-regularity results for the stationary system
\begin{eqnarray*}
\lambda \uu-{\bm \A}\uu=\f\in L^p(\Rd;\Rm),
\end{eqnarray*}
namely for the function $\uu_\lambda=R(\lambda,{\bm \A})\f$  defined in \eqref{ulambda} where $\bm T(t)$ has to be understood as the extrapolated semigroup in $L^p(\Rd;\Rm)$.
Indeed, for $\lambda$ sufficiently large, the solution to the previous equation is unique and given by the Laplace transform of the function $\bm{T}(\cdot)\f$, since the semigroup 
$\{\bm{T}(t)\}_{t\ge 0}$ is strongly continuous in 
$L^p(\R^d;\R^m)$.
% By taking estimate \eqref{sob_est} into account, the function $\uu_\lambda:=R(\lambda,{\bm \A})\f$ is well defined whenever the map $t\mapsto e^{-(\lambda-\frac{pH}{2})t}\|S(t)\|_{\mathcal{L}(L^1(\Rd))}$ belongs to $L^1((0,\infty))$. In many circumstances the $\mathcal{L}(L^1(\Rd))$-norm of $S(t)$ is allowed to growth exponentially at $\infty$ (see Remark \ref{L1-bound}). In this case $\uu_\lambda$ is well defined for every $\lambda$ large enough.

\begin{thm}\label{appr}
Let $p\in[2,\infty)$ and let $\bm{A}_p$ be the realization of the operator $\bm{\A}$ in $L^p(\R^d;\R^m)$. Assume that there exists $M\geq 1$ and $\omega\in\R$ such that $\|S_{\nu}(t)\|_{\mathcal{L}(L^1(\Rd))}\le Me^{\omega t}$ for any $t\in (0,\infty)$. If Hypotheses $\ref{hyp-derivata-2}$ are satisfied, then for every $\lambda \in\rho(\bm{A}_p)$ and $\bm{f}\in L^p(\R^d;\R^m)$ the function $\uu_{\lambda}=R(\lambda,\bm{A}_p)\f$ belongs to $B^{1+\sigma}_{p,p}(\R^d;\R^m)$ for any $\sigma \in (0,1)$ and there exists a positive constant $c$, depending on $\lambda$ and independent of $\bm{f}$, such that
\begin{align}\label{est_n}
\| \uu_\lambda\|_{B^{1+\sigma}_{p,p}(\R^d;\R^m)}\leq c\|\bm{f}\|_{L^p(\R^d;\R^m)}.
\end{align}
On the other hand, if Hypotheses $\ref{hyp-derivata-3}$ are satisfied and $\f\in B^{\theta}_{p,p}(\R^d;\R^m)$ for some $\theta \in (0,1)$, then $\bm{u}_\lambda$ belongs to $B^{2+\sigma}_{p,p}(\R^d;\R^m)$ for any $\sigma \in (0,\theta)$ and there exists a positive constant $c$, depending on $\lambda$ and independent of $\bm{f}$, such that
\begin{align}
\|\bm{u}_\lambda\|_{B^{2+\sigma}_{p,p}(\Rd;\R^m)}\leq c\|\bm{f}\|_{B^{\theta}_{p,p}(\Rd;\R^m)}.
\label{rullp}
\end{align}
Finally, the domain of $\bm{A}_p$ is continuously embedded into $W^{1,p}(\R^d;\R^m)$ if Hypotheses $\ref{hyp-derivata-1}$ are satisfied and it is continuously embedded into $B^{1+\sigma}_{p,p}(\R^d;\R^m)$ for every $\sigma\in (0,1)$, if Hypotheses $\ref{hyp-derivata-2}$ are satisfied.
\end{thm}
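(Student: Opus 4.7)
The plan is to combine a Laplace-transform representation of $\uu_\lambda$ with a splitting of the integrand along the time scale $|h|^2$, in the spirit of the proof of Theorem \ref{thm_Schauder_elliptic}, treating first sufficiently large spectral parameters and then reducing the general case. By Proposition \ref{towardslp} together with the hypothesis $\|S_\nu(t)\|_{\mathcal{L}(L^1(\Rd))}\le Me^{\omega t}$, the semigroup $\{\bm{T}(t)\}_{t\ge 0}$ is strongly continuous on $L^p(\Rd;\Rm)$ with growth bound at most $\tilde{H}_p+\omega/p$, so that every $\mu>\tilde{H}_p+\omega/p$ lies in $\rho(\bm{A}_p)$ and
\begin{equation*}
R(\mu,\bm{A}_p)\f=\int_0^\infty e^{-\mu s}\bm{T}(s)\f\, ds
\end{equation*}
as a Bochner integral in $L^p(\Rd;\Rm)$. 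For an arbitrary $\lambda\in\rho(\bm{A}_p)$ the resolvent identity $R(\lambda,\bm{A}_p)\f=R(\mu,\bm{A}_p)(\f+(\mu-\lambda)R(\lambda,\bm{A}_p)\f)$ allows me to reduce every assertion of the theorem to an analogous estimate for $R(\mu,\bm{A}_p)$ applied either to an element of $L^p(\Rd;\Rm)$ (for \eqref{est_n}) or of $B^\theta_{p,p}(\Rd;\Rm)$ (for \eqref{rullp}), at the price of an extra $\lambda$-dependent constant.

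\textbf{Proof of \eqref{est_n}.} Working under Hypotheses \ref{hyp-derivata-2}, I would first use \eqref{sob_est} with $k=0$ and $\ell\in\{1,2\}$ to make the integrand Bochner-integrable in $W^{1,p}(\Rd;\Rm)$ for large $\mu$, obtaining $D^\alpha\uu_\mu=\int_0^\infty e^{-\mu s}D^\alpha\bm{T}(s)\f\, ds$ for every multi-index with $|\alpha|\le 1$ and $\|\uu_\mu\|_{W^{1,p}(\Rd;\Rm)}\le c\|\f\|_{L^p(\Rd;\Rm)}$. Then, for $|\alpha|=1$ and $h\in\Rd$, I would split $D^\alpha\uu_\mu=\bm{a}_h+\bm{b}_h$ by integrating over $(0,|h|^2]$ and $(|h|^2,\infty)$ respectively, and transfer to $L^p(\Rd;\Rm)$ the computations in the proof of Theorem \ref{thm_Schauder_elliptic}. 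Using $\|D^\alpha\bm{T}(s)\f\|_{L^p(\Rd;\Rm)}\le cs^{-1/2}\|\f\|_{L^p(\Rd;\Rm)}$ I obtain $\|\bm{a}_h(\cdot+h)-\bm{a}_h\|_{L^p(\Rd;\Rm)}\le c|h|\|\f\|_{L^p(\Rd;\Rm)}$, while the mean value inequality combined with $\|D^2\bm{T}(s)\f\|_{L^p(\Rd;\Rm)}\le cs^{-1}\|\f\|_{L^p(\Rd;\Rm)}$ produces $\|\bm{b}_h(\cdot+h)-\bm{b}_h\|_{L^p(\Rd;\Rm)}\le c|h|(1+|\log|h||)\|\f\|_{L^p(\Rd;\Rm)}$ for small $|h|$; both quantities are uniformly bounded by $c\|\f\|_{L^p(\Rd;\Rm)}$ when $|h|>1$. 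Plugging these bounds into the Besov seminorm integral $\int_{\Rd}|h|^{-d-\sigma p}\|D^\alpha\uu_\mu(\cdot+h)-D^\alpha\uu_\mu\|_{L^p(\Rd;\Rm)}^p\, dh$ and invoking the convergence of $\int_0^1 r^{-1+(1-\sigma)p}(1+|\log r|)^p\, dr$ for every $\sigma\in(0,1)$ yields \eqref{est_n}.

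\textbf{Estimate \eqref{rullp} and domain embeddings.} Under Hypotheses \ref{hyp-derivata-3} and with $\f\in B^\theta_{p,p}(\Rd;\Rm)$, the same splitting applied to $D^\beta\uu_\mu$ for $|\beta|=2$ and driven by the Besov-to-Sobolev interpolation bounds for $\bm{T}(s)$ (the Besov analogues of \eqref{alphaelle}) yields $\|D^2\bm{T}(s)\f\|_{L^p(\Rd;\Rm)}\le cs^{(\theta-2)/2}\|\f\|_{B^\theta_{p,p}(\Rd;\Rm)}$ and $\|D^3\bm{T}(s)\f\|_{L^p(\Rd;\Rm)}\le cs^{(\theta-3)/2}\|\f\|_{B^\theta_{p,p}(\Rd;\Rm)}$; the two primitives $\int_0^{|h|^2}s^{(\theta-2)/2}\,ds$ and $|h|\int_{|h|^2}^\infty s^{(\theta-3)/2}\,ds$ are both of order $|h|^\theta$, so that $\|D^\beta\uu_\mu(\cdot+h)-D^\beta\uu_\mu\|_{L^p(\Rd;\Rm)}\le c|h|^\theta\|\f\|_{B^\theta_{p,p}(\Rd;\Rm)}$ for small $|h|$, and the corresponding Besov integral converges precisely for $\sigma<\theta$, giving \eqref{rullp}. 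For the domain embeddings, I would write any $\uu$ in the domain of $\bm{A}_p$ as $\uu=R(\mu,\bm{A}_p)(\mu\uu-\bm{A}_p\uu)$ with a large $\mu$: under Hypotheses \ref{hyp-derivata-2} the first part of the theorem immediately gives $\uu\in B^{1+\sigma}_{p,p}(\Rd;\Rm)$, while under the weaker Hypotheses \ref{hyp-derivata-1} only the $\bm{a}_h$-type argument, which relies solely on $\|D^1\bm{T}(s)\|_{\mathcal{L}(L^p(\Rd;\Rm))}\le cs^{-1/2}$, is at disposal, but it already forces $\uu\in W^{1,p}(\Rd;\Rm)$.

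\textbf{Main obstacle.} The principal technical difficulty lies in the $s^{-1}$ singularity of $\|D^2\bm{T}(s)\|_{\mathcal{L}(L^p(\Rd;\Rm))}$, which generates the logarithmic factor in the $\bm{b}_h$-estimate and is responsible for the strict bound $\sigma<1$ in \eqref{est_n}: one has to verify carefully that $r\mapsto r^{-1+(1-\sigma)p}(1+|\log r|)^p$ remains integrable near the origin, which is precisely the threshold measuring the gap between the $W^{2,p}$-smoothing provided by Hypotheses \ref{hyp-derivata-2} and the $B^{1+\sigma}_{p,p}$-target. A secondary, more routine, issue is the justification of differentiation under the integral sign and of the Bochner integrability in $W^{1,p}(\Rd;\Rm)$ and $W^{2,p}(\Rd;\Rm)$, for which the largeness of $\mu$ (controlling the exponential growth factor in \eqref{sob_est}) is essential and explains why the preliminary reduction via the resolvent identity is indispensable.
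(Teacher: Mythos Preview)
Your argument is correct, but it follows a genuinely different route from the paper's own proof.

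The paper does not perform any $\bm a_h/\bm b_h$ splitting here. Instead, it invokes directly the already-interpolated Besov-to-Besov smoothing estimate \eqref{teta1teta2}, namely $\|\bm T(t)\f\|_{B^{1+\sigma}_{p,p}}\le c_\varepsilon e^{(\tilde H_p+\varepsilon)t}t^{-(1+\sigma)/2}\|S_\nu(t)\|_{\mathcal L(L^1)}^{1/p}\|\f\|_{L^p}$, and simply integrates it against $e^{-(\lambda_p+1)t}$: the time integral $\int_0^\infty e^{-t/2}t^{-(1+\sigma)/2}\,dt$ converges precisely because $\sigma<1$. The case of a general $\lambda\in\rho(\bm A_p)$ is then handled by the resolvent identity, exactly as you do. For \eqref{rullp} the paper repeats the same trick with $\theta_1=\theta$ and $\theta_2=2+\sigma$ in \eqref{teta1teta2}.

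What each approach buys: the paper's proof is a one-liner once \eqref{teta1teta2} is in hand, since all the interpolation work has been done upstream. Your approach is more self-contained for \eqref{est_n}, relying only on the raw Sobolev estimates \eqref{sob_est} and the definition of the Besov seminorm; the price is the logarithmic factor coming from the borderline $s^{-1}$ singularity of $\|D^2\bm T(s)\|_{\mathcal L(L^p)}$, which you then have to absorb by checking that $\int_0^1 r^{-1+(1-\sigma)p}(1+|\log r|)^p\,dr<\infty$. Both routes arrive at the same threshold $\sigma<1$ (respectively $\sigma<\theta$), and your reduction via the resolvent identity and your treatment of the domain embeddings coincide with the paper's.
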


\begin{proof}
Throughout the proof $K$ will denote a positive constant, independent of $\bm{f}$, that may change from line to line. Fix $p\in [2,\infty)$, $\bm{f}\in L^p(\R^d;\R^m)$
and set $\lambda_p:= \tilde{H}_p+\frac{\omega}{p}$, where $\tilde{H}_p$ is the constant defined in the statement of Proposition \ref{towardslp}.
Note that $\lambda_p+1$ belongs to the resolvent set of the operator $\bm{A}_p$. Moreover, using the H\"older inequality and \eqref{teta1teta2} (with $\theta_1=0$, $\theta_2=1+\sigma$ and $\varepsilon=\frac{1}{2}$), we get
\begin{align*}
\|\uu_{\lambda_p+1}\|_{B^{1+\sigma}_{p,p}(\Rd;\Rm)}&\le \int_0^{\infty}e^{-(\lambda_p+1)t}\|{\bm T}(t)\f\|_{B^{1+\sigma}_{p,p}(\Rd;\Rm)}dt\\
&\le K\bigg(\int_0^\infty e^{-\frac{t}{2}}t^{-\frac{1+\sigma}{2}}dt\bigg )\|\f\|_{L^p(\Rd; \Rm)}\\
& \le K\|\f\|_{L^p(\Rd; \Rm)}.
\end{align*}
 %If $\f$ takes values in $\C^m$, it suffices to observe that $R(\lambda,\bm{A}_p)\f=R(\lambda,\bm{A}_p){\rm Re}\,\f+iR(\lambda,\bm{A}_p){\rm Im}\,\f$, where
%${\rm Re}\,\f=({\rm Re}\,f_1,\ldots,{\rm Re}\,f_d)$ and
%${\rm Im}\,\f=({\rm Im}\,f_1,\ldots,{\rm Im}\,f_d)$.

For a general $\lambda\in\rho(\bm{A}_p)$, we use the resolvent identity to write 
$\uu_{\lambda}=R(\lambda_p+1,\bm{A}_p)[\f+(\lambda_p+1-\lambda)R(\lambda,\bm{A}_p)\f]$. Since the function $\f+(\lambda_p+1-\lambda)R(\lambda,\bm{A}_p)\f$ belongs to $L^p(\R^d;\R^m)$ and its $L^p(\R^d;\R^m)$-norm can be bound from above by a positive constant, independent of $\f$, from the above result, we conclude that $\uu_{\lambda}$ belongs to $B^{1+\sigma}_{p,p}(\R^d;\R^m)$, for $\sigma$ as above, and estimate \eqref{est_n} is satisfied.

Estimate \eqref{rullp} can be proved similarly, using estimate \eqref{teta1teta2} with $\theta_1=\theta$, $\theta_2=2+\sigma$ and $\eps=\frac{1}{2}$, when $\lambda=\lambda_p+1$, and then, writing $\uu_{\lambda}=R(\lambda_p+1,\bm{A}_p)[\f+(\lambda_p+1-\lambda)R(\lambda,\bm{A}_p)\f]$ and using the first part of the proof into account to infer that
$\f+(\lambda_p+1-\lambda)R(\lambda,\bm{A}_p)\f$ belongs to
$B^{\theta}_{p,p}(\R^d;\R^m)$ with norm which can be bounded from above by a positive constant, independent of $\f$, times the norm of $\f$ in $B^{\theta}_{p,p}(\R^d;\R^m)$. 

To prove the last part of the assertion, we fix $\uu\in D(\bm{A}_p)$ and set $\f=\lambda\uu-\bm{A}_p\uu$ for some $\lambda\in\rho(\bm{A}_p)$.
Clearly, $\f$ belongs to $L^p(\R^d;\R^m)$ and there exists a positive constant $c$, independent of $\uu$, such that
$\|\f\|_{L^p(\R^d;\R^m)}\le c\|\uu\|_{D(\bm{A}_p)}$. Moreover,
$\uu=R(\lambda,\bm{A}_p)\f$. Estimate \eqref{est_n} yields the assertion when Hypotheses \ref{hyp-derivata-2} are satisfied. On the other hand, under Hypotheses \ref{hyp-derivata-1}, using \eqref{sob_est}, with $k=0$ and $\ell=1$, and adapting the argument in the first part of the proof, it can be easily proved that 
$R(\lambda_p+1,\bm{A}_p)\f$ belongs to $W^{1,p}(\R^d;\R^m)$ and its norm can be estimated from above by a constant (independent of $\f$) times the norm of $\f$ in $L^p(\R^d;\R^m)$.
\end{proof}

\subsection{Regularity estimates in $L^p$: evolution systems}

Now we deal with mild solutions to Cauchy evolution systems. Fix $T>0$ and consider the problem
\begin{equation}\label{evo-sys}
\left\{
\begin{array}{ll}
D_t \uu(t,x)= {\bm \A}\uu(t,x)+\g(t,x), & (t,x)\in [0,T]\times \Rd,\\[1mm]
\uu(0, x)=\f(x), & x\in\R^d,
\end{array}
\right.
\end{equation}
where $\f:\Rd\to \Rm$ and $\g:[0,T]\times \Rd\to \Rm$ belong to suitable $L^p$-spaces. Mild solutions of the evolution system \eqref{evo-sys} are written as in \eqref{Sol_Mild_Parab} where $\{{\bm T}(t)\}_{t\ge 0}$ has to be understood as the semigroup 
in $L^p(\R^d;\R^m)$. 
We begin by studying the function $\bm w$ defined
by
\begin{equation}\label{u0}
{\bm w}(t,x)= \int_0^t ({\bm T}(s)\g(t-s,\cdot))(x)ds, \qquad\;\, t\in [0,T],\;\,x \in \Rd. 
\end{equation}
In the following proposition we establish Sobolev and Besov type regularity results with respect to the space variable for the function $\bm{w}$.

\begin{pro}
\label{prop-5.8}
Assume that there exists $M\geq 1$ and $\omega\in\R$ such that $\|S_{\nu}(t)\|_{\mathcal{L}(L^1(\Rd))}\le Me^{\omega t}$ for any $t\in (0,\infty)$. Assume that Hypotheses $\ref{hyp-derivata-1}$ are satisfied. For every $\g\in L^p((0,T);L^p(\Rd;\Rm))$ the function $\bm w$ defined in \eqref{u0} belongs to
$L^p((0, T);W^{1,p}(\Rd;\Rm))$ and there exists a positive constant $c$, depending on $p$, $T$ and independent of $\bm{g}$, such that 
\begin{equation*}
\|\bm w\|_{L^p((0, T);W^{1,p}(\Rd;\Rm))}\le c\|\g\|_{L^p((0,T);L^p(\Rd;\Rm))}.
\end{equation*}
If Hypotheses $\ref{hyp-derivata-1}$ are replaced by Hypotheses $\ref{hyp-derivata-2}$, then
 $\bm w$ belongs to
$L^p((0, T);B^{\theta}_{p,p}(\Rd;\Rm))$ for any $\theta \in [0,2)$ and there exists a positive constant $c$, depending on $p$, $T$ and independent of $\bm{g}$, such that 
\begin{equation}\label{521}
\|\bm w\|_{L^p((0, T);B^{\theta}_{p,p}(\Rd;\Rm))}\le c\|\g\|_{L^p((0,T);L^p(\Rd;\Rm))}.
\end{equation}
Finally, under Hypotheses $\ref{hyp-derivata-3}$, if $\g\in L^p((0,T);B^{\theta_1}_{p,p}(\Rd;\Rm))$ for some $\theta_1 \in (0,1)$ then $\bm w$ belongs to $L^p((0,T);B^{2+\theta_2}_{p,p}(\Rd;\Rm))$ for any $\theta_2\in [0,\theta_1)$ and
\begin{equation}\label{522}
\|\bm w\|_{L^p((0, T);B^{2+\theta_2}_{p,p}(\Rd;\Rm))}\le c\|\g\|_{L^p((0,T);B^{\theta_1}_{p,p}(\Rd;\Rm))}
\end{equation}
for a positive constant $c$ depending on $p$, $T$, $\theta_1$ and $\theta_2$ and independent of $\bm{g}$.
\end{pro}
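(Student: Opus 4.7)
The plan is to treat all three parts by the same three-step scheme: (1) apply the semigroup regularization estimate \eqref{sob_est} or \eqref{teta1teta2} to the integrand of \eqref{u0}; (2) pull the target-space norm inside the integral via Minkowski's inequality; (3) recognize the result as a convolution on $(0,T)$ against a kernel with an integrable power-type singularity at the origin, and close with Young's convolution inequality. Since we are assuming $\|S_\nu(s)\|_{\mathcal{L}(L^1)}\le Me^{\omega s}$, the factor $\|S_\nu(s)\|_{\mathcal{L}(L^1)}^{1/p}$ appearing in \eqref{sob_est}--\eqref{teta1teta2} merges with the exponential in $s$ and is harmless on the bounded interval $[0,T]$; what controls integrability in time is the singular factor $s^{-(\theta_2-\theta_1)/2}$.

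Concretely, for part (i) I would apply \eqref{sob_est} with $k=0$ and $\ell=1$; for part (ii), estimate \eqref{teta1teta2} with $\theta_1=0$ and $\theta_2=\theta$; for part (iii), estimate \eqref{teta1teta2} with the given $\theta_1$ and with $2+\theta_2$ in place of $\theta_2$. The resulting singular exponents are, respectively, $1/2$, $\theta/2$ and $(2+\theta_2-\theta_1)/2$, all strictly less than $1$ under the stated constraints on $\theta$ and on $\theta_2$. Thus the kernel $k(s):=c\,s^{-\alpha/2}e^{\sigma s}\chi_{(0,T)}(s)$ belongs to $L^1(0,T)$ with norm depending only on $p$, $T$ and (in part (iii)) on $\theta_1-\theta_2$. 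Minkowski's inequality gives
\begin{align*}
\|\bm{w}(t,\cdot)\|_Y\le\int_0^t\|\bm{T}(s)\bm{g}(t-s,\cdot)\|_Y\,ds\le\int_0^t k(s)\,\|\bm{g}(t-s,\cdot)\|_X\,ds,
\end{align*}
where $(X,Y)$ is the relevant pair of spaces, and Young's convolution inequality on $(0,T)$ yields
\begin{align*}
\|\bm{w}\|_{L^p((0,T);Y)}\le\|k\|_{L^1(0,T)}\,\|\bm{g}\|_{L^p((0,T);X)},
\end{align*}
which is precisely \eqref{521} or \eqref{522}, and the analogous bound in part (i).

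The only issue requiring care is measurability: one needs to verify that $s\mapsto\bm{T}(s)\bm{g}(t-s,\cdot)$ is Bochner-measurable into $Y$ for a.e.\ $t\in(0,T)$ and that $t\mapsto\bm{w}(t,\cdot)$ is measurable into $Y$. Both facts follow from the strong continuity of $\{\bm{T}(t)\}_{t\ge0}$ in $L^p(\R^d;\R^m)$ together with the continuity of $s\mapsto\bm{T}(s)\bm{f}$ into $W^{\ell,p}(\R^d;\R^m)$ (hence into the Besov spaces, by interpolation) for $s>0$ granted by Proposition \ref{towardslp}, after approximating $\bm{g}$ by step functions with values in the relevant Banach space. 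The main (minor) obstacle is simply ensuring the singular exponent stays below $1$ in all three regimes, which is exactly why the ranges $\theta<2$ in part (ii) and $\theta_2<\theta_1$ in part (iii) are sharp for this approach; beyond that, the argument reduces to standard convolution estimates.
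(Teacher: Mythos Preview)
Your proposal is correct and follows essentially the same approach as the paper: both reduce the estimate to bounding a time-convolution against the kernel $s\mapsto c\,e^{\sigma s}s^{-\alpha/2}$ on $(0,T)$, whose integrability is guaranteed precisely by the constraints $\alpha<2$ in each regime. The only cosmetic difference is that the paper carries out the convolution bound by hand via H\"older's inequality and Fubini (splitting $e^{\sigma s}s^{-1/2}\|\g(t-s,\cdot)\|$ as $(s^{-1/2}\|\g\|^p)^{1/p}\cdot(e^{p'\sigma s}s^{-1/2})^{1/p'}$ and then interchanging integrals), whereas you invoke Young's convolution inequality directly; the density reduction is also handled slightly differently (the paper restricts to $\g\in L^p((0,T);C_c(\R^d;\R^m))$ rather than step functions), but the substance is the same.
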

\begin{proof}
Throughout the proof $K$ will denote a positive constant, independent of $\bm{g}$, that may change from line to line. We can limit ourselves to proving the claim for smooth functions. Indeed, arguing as in the proof of Theorem \ref{appr} we can extend the estimate for functions $\g$ that belong to $L^p((0,T);L^p(\Rd;\Rm))$. Thus, fix $\g\in L^p((0,T);C_c(\Rd;\Rm))$. Then, for every $t \in (0, T)$, thanks to estimate \eqref{sob_est} with $(k,\ell)=(0,1)$, $\varepsilon=1$ and the H\"older inequality, we deduce that
\begin{align*}
\int_0^T \|\bm w(t, \cdot)&\|_{W^{1, p}(\Rd;\Rm)}^p dt\\
\le &K\int_0^T\left(\int_0^t e^{\left (\tilde{H}_p+\frac{\omega}{p}+1\right )s} s^{-\frac{1}{2}}\|\g(t-s, \cdot)\|_{L^p(\Rd;\Rm)}ds\right)^p dt\\
\le &K\int_0^T\bigg (\int_0^t s^{-\frac{1}{2}}\|\g(t-s, \cdot)\|_{L^p(\Rd;\Rm)}^pds\bigg )\bigg (\int_0^t e^{\frac{p}{p-1}\left (\tilde{H}_p+\frac{\omega}{p}+1\right )s}s^{-\frac{1}{2}}ds\bigg )^{p-1}dt\\
\le & K\int_0^T\left(\int_0^ts^{-\frac{1}{2}}\|\g(t-s, \cdot)\|_{L^p(\Rd;\Rm)}^pds\right)dt\\
\leq & K\bigg (\int_0^T\|\g(r, \cdot)\|_{L^p(\Rd;\Rm)}^pdr\bigg )\bigg (\int_0^Ts^{-\frac{1}{2}}ds\bigg )\\
\le & K\|\g\|_{L^p((0,T);L^p(\Rd;\Rm))}^p.
\end{align*}
Similar arguments together with estimates \eqref{teta1teta2} allow us to prove \eqref{521} and \eqref{522}.
\end{proof}

\begin{cor}
Assume that there exists $M\geq 1$ and $\omega\in\R$ such that $\|S_{\nu}(t)\|_{\mathcal{L}(L^1(\Rd))}\le Me^{\omega t}$ for any $t\in (0,\infty)$. Assume Hypotheses $\ref{hyp-derivata-1}$ hold true, fix $p\in [2,\infty)$, $\f \in B^{\theta}_{p,p}(\Rd;\Rm)$, for some $\theta>1-\frac{2}{p}$, and $\g \in L^p((0,T);L^p(\Rd;\Rm))$. The mild solution $\bm{v}$ to the problem \eqref{evo-sys} (introduced in \eqref{Sol_Mild_Parab}) belongs to the space $L^p((0, T);W^{1,p}(\Rd;\Rm))$ and there exists a positive constant $c_1$, independent of $\f$ and $\g$, such that
\begin{equation} 
\|\bm{v}\|_{L^p((0, T);W^{1,p}(\Rd;\Rm))}\le c_1(\|\f\|_{B^{\theta}_{p,p}(\Rd;\Rm)}+\|\bm{g}\|_{L^p((0, T);L^p(\Rd;\Rm))}).
\label{stima-1}
\end{equation}
If Hypotheses $\ref{hyp-derivata-1}$ are replaced by Hypotheses $\ref{hyp-derivata-2}$ and $\f\in B^{\theta_1}_{p,p}(\R^d;\R^m)$ for some $\theta_1>\theta-\frac{2}{p}$ and some $\theta\in (0,2)$, then $\bm{v}$ belongs to $L^p((0,T);B^\theta_{p,p}(\Rd;\Rm))$ and there exists a positive constant $c_1$, independent of $\f$ and $\g$, such that
\begin{equation*}
\|\bm{v}\|_{L^p((0, T);B^\theta_{p,p}(\Rd;\Rm))}\le c_2(\|\f\|_{B^{\theta_1}_{p,p}(\Rd;\Rm)}+ \|\bm{g}\|_{L^p((0, T);L^p(\Rd;\Rm))}).
%\label{stima-2}
\end{equation*}
Finally, if Hypotheses $\ref{hyp-derivata-2}$ are replaced by Hypotheses $\ref{hyp-derivata-3}$, $\f$ belongs to $B^{\theta_1}_{p,p}(\Rd;\Rm)$ for some $\theta_1>2+\theta-\frac{2}{p}$ and some $\theta\in (0,1)$ and $\g$ belongs to $L^p((0,T);B^{\theta_2}_{p,p}(\Rd;\Rm))$ for some $\theta_2 \in (\theta,1)$ then $\bm{v}$ belongs to $L^p((0,T);B^{2+\theta}_{p,p}(\Rd;\Rm))$ and there exists a positive constant $c_3$, independent of $\f$ and $\g$, such that
\begin{equation}
\|\bm{v}\|_{L^p((0, T);B^{2+\theta}_{p,p}(\Rd;\Rm))}\le c_3(\|\f\|_{B^{\theta_1}_{p,p}(\Rd;\Rm)}+ \|\bm{g}\|_{L^p((0, T);B^{\theta_2}_{p,p}(\Rd;\Rm))}).
\label{stima-3}
\end{equation}
Moreover, $\bm{v}$ solves the Cauchy problem \eqref{evo-sys}.
\end{cor}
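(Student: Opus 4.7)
The plan is to decompose the mild solution as $\bm{v} = \bm{T}(\cdot)\f + \bm{w}$, with $\bm{w}$ defined by \eqref{u0}, and treat the two summands separately. The inhomogeneous part $\bm{w}$ is handled directly by Proposition \ref{prop-5.8}: in the three cases of the statement it yields $\bm{w} \in L^p((0,T); W^{1,p}(\R^d;\R^m))$, $\bm{w} \in L^p((0,T); B^\theta_{p,p}(\R^d;\R^m))$, and $\bm{w} \in L^p((0,T); B^{2+\theta}_{p,p}(\R^d;\R^m))$, respectively (note that in the third case the condition $\theta < \theta_2 < 1$ matches precisely the setup of \eqref{522}). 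In each case the norm of $\bm{w}$ is controlled by the norm of $\g$ appearing on the right-hand side of the desired estimate.

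For the initial-datum contribution $\bm{T}(\cdot)\f$ I would invoke the smoothing estimate \eqref{teta1teta2} with the pairs $(\theta_1,\theta_2) = (\theta,1)$, $(\theta_1,\theta)$, or $(\theta_1,2+\theta)$ in the three cases respectively, choosing $\varepsilon = 1$ to absorb the exponential factor into a constant depending only on $T$. This produces a pointwise bound of the form $\|\bm{T}(t)\f\|_X \leq K\,t^{-\beta/2}\|\f\|_Y$, with $\beta$ equal to $1-\theta$, $\theta-\theta_1$, or $2+\theta-\theta_1$. Raising to the $p$-th power and integrating over $(0,T)$ reduces the matter to the finiteness of $\int_0^T t^{-p\beta/2}\,dt$, which is equivalent to $\beta < 2/p$; this is precisely the trace-space-type condition imposed on $\theta$ or $\theta_1$ in each case. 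Borderline situations (for instance $\theta \geq 1$ in the first case, or $\theta_1 \geq 1$ in the third one when $\theta + 2 - 2/p < \theta_1$ but $\theta_1 > 1$) can be handled by first applying the continuous embedding $B^{\theta_1}_{p,p}\hookrightarrow B^{\theta'}_{p,p}$ to reduce to an index $\theta'$ in the range covered by \eqref{teta1teta2}.

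Summing the two contributions produces the three displayed inequalities. To check that $\bm{v}$ actually solves the Cauchy problem \eqref{evo-sys}, I would argue by approximation: pick $(\f_n) \subset C_c^\infty(\R^d;\R^m)$ and $(\g_n) \subset C_c^\infty((0,T)\times\R^d;\R^m)$ converging to $\f$ and $\g$ in the norms on the right-hand side. By Theorem \ref{thm-4.3}, the corresponding mild solutions $\bm{v}_n$ are classical solutions of the approximating Cauchy problems. The uniform a priori estimates just proved, together with the strong continuity of $\{\bm{T}(t)\}_{t\geq 0}$ on $L^p(\R^d;\R^m)$, allow one to pass to the limit and to conclude that $\bm{v}$ solves \eqref{evo-sys}, with initial datum $\f$ attained in $L^p(\R^d;\R^m)$. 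The main, and essentially only, non-routine step is recognising the trace-space exponent $\beta < 2/p$ as the exact condition making the singularity $t^{-\beta/2}$ $p$-integrable near zero; once this is isolated, the remainder is routine Fubini/Hölder bookkeeping of the same flavour as in Proposition \ref{prop-5.8}.
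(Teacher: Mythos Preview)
Your proposal is correct and follows essentially the same route as the paper: split $\bm{v}=\bm{T}(\cdot)\f+\bm{w}$, control $\bm{w}$ via Proposition~\ref{prop-5.8}, control $\bm{T}(\cdot)\f$ via the smoothing estimates \eqref{sob_est}/\eqref{teta1teta2} and the observation that the trace-type condition on $\theta$ (resp.\ $\theta_1$) is exactly what makes $t\mapsto t^{-\beta/2}$ lie in $L^p(0,T)$, and then prove that $\bm{v}$ is a solution by approximating $\f,\g$ with smooth compactly supported data, invoking Theorem~\ref{thm-4.3}, and passing to the limit. The only point the paper makes slightly more explicit is that the limit passage for the equation is carried out in $L^p((0,T);L^p(B(0,r);\R^m))$ using the closedness of $D_t$ there, and that this last step (hence the ``Moreover'' clause) is argued under Hypotheses~\ref{hyp-derivata-3}, where the $B^{2+\theta}_{p,p}$ regularity of $\bm v$ makes $\bm{\A}\bm{v}$ locally integrable.
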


\begin{proof}
Estimates \eqref{stima-1}-\eqref{stima-3} follow from Theorem \ref{appr} and Proposition \ref{prop-5.8}. In particular, estimates \eqref{sob_est}, \eqref{teta1teta2} and the continuity of the map $t\mapsto\bm{T}(t)\f$ in $(0,\infty)$ with values in $B^{\theta}_{p,p}(\R^d;\R^m)$ (where $\theta\in (0,1)$ under Hypotheses \ref{hyp-derivata-1}, $\theta\in (1,2)$ under Hypotheses \ref{hyp-derivata-2} and $\theta\in (2,3)$ under Hypotheses \ref{hyp-derivata-3}, show that the function $\bm{T}(\cdot)\f$ belongs to $L^p((0,T);B^{\theta}_{p,p}(\R^d;\R^m))$ if $\theta$ is as in the statement of the theorem.

To complete the proof, we need to show that, if $\f\in B^{\theta_1}_{p,p}(\R^d;\R^m)$ and $
\bm{g}\in L^p((0,T);B^{\theta_2}_{p,p}(\R^d;\R^m))$, with $\theta_1$ and $\theta_2$ as in the statement of the theorem, then the function $\bm{v}$ solves the Cauchy problem \eqref{evo-sys}. For this purpose, we fix two sequences $(\f_n)_{n\in\N}$ and $(\bm{g}_n)_{n\in\N}$ of smooth and compactly supported functions, which converge to $\f$ in 
$B^{\theta_1}_{p,p}(\R^d;\R^m)$ and $\bm{g}$ in $L^p((0,T);B^{\theta_2}_{p,p}(\R^d;\R^m))$, respectively.
By Theorem \ref{thm-4.3}, for each $n\in\N$ the function $\bm{v}_n$, defined by
\begin{eqnarray*}
\bm{v}_n(t,x)=(\bm{T}(t)\f_n)(x)+\int_0^t(\bm{T}(t-s)\bm{g}_n(s,\cdot))(x)ds,\qquad\;\,t\in [0,T],\;\,x\in\R^d,
\end{eqnarray*}
belongs to $C^{1,2}([0,T]\times\R^d;\R^m)\cap L^p((0,T);B^{2+\theta}_{p,p}(\R^d;\R^m))$ and solves the Cauchy problem \eqref{evo-sys}, with $\bm{f}$ and $\bm{g}$ replaced with $\bm{f}_n$ and $\bm{g}_n$, respectively. Moreover, $\bm{v}_n$ converges to the function $\bm{v}$ in $L^p((0,T);B^{2+\theta}_{p,p}(\R^d;\R^m))$, thanks to estimate \eqref{stima-3}.
Since $D_t\bm{v}_n=\bm{\A}\bm{v}_n+\bm{g}_n$, it follows that the sequence $(D_t\bm{v}_n)_{n\in\N}$ converges in $L^p((0,T);L^p(B(0,r);\R^m))$ to the function $\bm{\A}\bm{v}+\bm{g}$ (for every $r\in (0,\infty)$). Recalling that the time derivative is a closed operator in $L^p((0,T);L^p(B(0,r);\R^m))$, with $W^{1,p}((0,T);L^p(B(0,r);\R^m))$ as domain, it follows that the function $\bm{v}$ admits time derivative, which belongs to $L^p((0,T);L^p(B(0,r);\R^m))$ for every $r\in (0,\infty)$ and $D_t\bm{v}=\bm{\A}\bm{v}+\bm{g}$. Since, clearly, $\bm{v}(0,\cdot)=\bm{f}$, we have proved that the function $\bm{v}$ solves the Cauchy problem \eqref{evo-sys}.
\end{proof}

\section{Examples}
\label{sect-6}
Here, we exhibit a class of operators that satisfies our assumptions and to which our results can be applied.
Let the coefficients of the operator $\bm \A$ be given by
\begin{align*}
Q(x)=(1+|x|^2)^k Q_0, \qquad\;\, B_i(x)=-x_i(1+|x|^2)^pI_m+(1+|x|^2)^r{B}_i^0,\qquad\;\, C(x)=-(1+|x|^2)^\gamma C_0
\end{align*}
for any $x \in \Rd$ and $i=1, \ldots,d$, where $I_m$ is the $m\times m$ identity matrix. We assume the basic conditions:
\begin{enumerate}[\rm (i)]
\item $Q_0, {B}_i^0$ $(i=1,\ldots,d)$ and $C_0 $ are constant matrices, $Q_0$ is symmetric, $Q_0, C_0$ are positive definite matrices and the matrices $B_i^0$ ($i=1,\ldots,d$) have null entries on the diagonal;
\item the exponents $k,p,r,\gamma$ are nonnegative, $k < p+1$ and  $\gamma >\max\{0,2r-k\}.$
\end{enumerate}
Under the previous assumptions Hypotheses \ref{base} are satisfied. In particular, Hypothesis \ref{base}(v) is satisfied with $\varphi(x)=1+|x|^2$ and any $\mu \in \R$.
Moreover, if there exists $\mu_1\in [0,2]$ such that
\begin{equation}\label{tizio-luca}
\left\{
\begin{array}{l}
\mu_1(2r-1)\le 2k;\\[1mm]
\max\{p,\gamma\}>\frac{1}{2}\max\{2k-1,(2r-1)(2-\mu_1)\},
\end{array}
\right.
\end{equation}
then Hypotheses \ref{hyp-derivata-1} are satisfied.
%Here,
%\begin{eqnarray*} 
%\Gamma_h(\mu_1,\ldots,\mu_h)=\max_{i=1,\ldots,h}\{2k-1,(2r-i)(2-\mu_i)\}
%\end{eqnarray*}
%for $h=1,2,3$. 
On the other hand, if  there exist $\mu_1\in [0,2]$ such that
\begin{equation}\label{tizio1-luca}
\left\{
\begin{array}{l}
\mu_1(2r-1) \le 2k;\\[1mm]
\max\{p,\gamma\}>\frac{1}{2}\max\{2k-1,(2r-1)(2-\mu_1),2r-2\},
\end{array}
\right.
\end{equation}
then Hypotheses \ref{hyp-derivata-2} are satisfied, with $\alpha_1=\alpha_2=\mu_2=\rho_2=\tau_1=\tau_2=1$.
Finally, \eqref{tizio1-luca} ensure also that Hypotheses \ref{hyp-derivata-3} are satisfied, taking $\alpha_1=\alpha_2=\alpha_3=\mu_2=\mu_3=\rho_2=\rho_3=\tau_1=\tau_2=\tau_3=1$.

Under the previous conditions, we can then apply the results in Sections \ref{sect-3} and \ref{sect-4}.

As far as the results in Section \ref{sect-5} are concerned, condition \eqref{cond-diver} is satisfied if
$\gamma>\max\{p,k-1\}$. In such a case, the conditions in (ii) together with \eqref{tizio-luca} become
\begin{equation*}
\left\{
\begin{array}{l}
k<p+1;\\
\gamma > \max\{2r-k,0\};\\
\mu_1(2r-1)\le 2k;\\[1mm]
\gamma>\frac{1}{2}\max\{2k-1,(2r-1)(2-\mu_1)\},
\end{array}
\right.
\end{equation*}
whereas conditions in (ii) together with \eqref{tizio1-luca} reduce to
\begin{equation*}
\left\{
\begin{array}{l}
k<p+1;\\
\gamma > \max\{2r-k,0\};\\
\mu_1(2r-1)\le 2k;\\[1mm]
\gamma>\frac{1}{2}\max\{2k-1,(2r-1)(2-\mu_1),2r-2\}.
\end{array}
\right.
\end{equation*}

% Finally, if  there exist $\alpha_i,\mu_i, \tau_i \in [0,2]$ for $i=1,2,3$ and $\rho_2,\rho_3 \in [0,2]$ such that
% \begin{equation}\label{tizio2}
% \left\{
% \begin{array}{l}
% \max\{\alpha_1(2k-1),\alpha_2(2k-2),\mu_1(2r-1)\} \le 2k;\\[1mm]
% \max\{\tau_1(2\gamma-1), \tau_2(2\gamma-2), \tau_3(2\gamma-3)\}<2\gamma;\\[1mm]
% \max\{p,\gamma\}>\frac{1}{2}\max\{\Gamma_3,\Phi_2, \max_{i=2,3}(2r-i)\mu_i, (2k-3)\alpha_3\},
% \end{array}
% \right.
% \end{equation}
% where
% \begin{eqnarray*}
% \Phi_2= \max_{i\in\{1,2\}}\{(2p-i)\rho_{i+1}, (2p-i)(2-\rho_{i+1}),(2\gamma-i)\tau_i\},
% \end{eqnarray*}
% then Hypotheses \ref{hyp-derivata-3} are satisfied. In particular, choosing $\alpha_i=\tau_i=\mu_i=0$ for $i=1,2,3$ and $\rho_2=\rho_3=1$, all the conditions in \eqref{tizio2} are satisfied if \eqref{condition_p} holds true.

\appendix

\section{Interior estimates}
In this section we will recall some interior regularity results that we use throughout the paper. We will provide the proof of Theorem \ref{teo-reg-int}, while for the proof of Theorem \ref{thm-A2} we refer to \cite[Theorem A.2]{AALT}.

\begin{thm}
\label{teo-reg-int}
Let $\Omega$ be a domain of $\R^d$ and let the entries of the coefficients of the operator $\bm{\A}$ belong to 
$\mathcal{C}^{k+\alpha}_{\rm loc}(\Omega)$ for some $k\in\N$ and $\alpha\in (0,1)$. Let $\uu\in\mathcal{C}^{1+\alpha/2,2+\alpha}_{\rm loc}([0,T]\times\Omega;\R^m)$ be such that $\uu(0,\cdot)\in \mathcal{C}^{2+k+\alpha}_{\rm loc}(\Omega;\R^m)$, $D_t\uu-\bm{\A}\uu \in \mathcal{C}^{\alpha/2,k+\alpha}_{\rm loc}([0,T]\times\Omega;\R^m)$. Then, $\uu\in \mathcal{C}^{1+\alpha/2,2+k+\alpha}_{\rm loc}([0,T] \times \Omega;\R^m)$, $D_t\uu \in \mathcal{C}^{\alpha,k+\alpha}_{\rm loc}([0,T]\times\Omega;\R^m)$ and $D_x^{\beta}D_t\uu=D_tD_x^{\beta}\uu$ in $(0,T)\times \Omega$ for any $|\beta|\le k$.
\end{thm}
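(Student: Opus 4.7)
\textbf{Plan of proof for Theorem \ref{teo-reg-int}.} The plan is to argue by induction on $k\geq 0$. The case $k=0$ is essentially the hypothesis: since $\uu\in\mathcal{C}^{1+\alpha/2,2+\alpha}_{\rm loc}$ and the coefficients of $\bm{\A}$ lie in $\mathcal{C}^{\alpha}_{\rm loc}$, the function $\bm{\A}\uu$ belongs to $\mathcal{C}^{\alpha/2,\alpha}_{\rm loc}$, so $D_t\uu=(D_t\uu-\bm{\A}\uu)+\bm{\A}\uu\in\mathcal{C}^{\alpha/2,\alpha}_{\rm loc}$, and the commutation $D_tD_x^{\beta}\uu=D_x^{\beta}D_t\uu$ for $|\beta|=0$ is trivial.

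For the inductive step, assume the conclusion holds at level $k-1$, so that $\uu\in\mathcal{C}^{1+\alpha/2,2+(k-1)+\alpha}_{\rm loc}$. Fix a direction $e_s$, a compact set $K\Subset\Omega$, and a slightly larger compact set $K'$ with $K\Subset K'\Subset\Omega$. For small $h\neq 0$ introduce the difference quotient $\bm{v}_h(t,x):=h^{-1}(\uu(t,x+he_s)-\uu(t,x))$, defined on $[0,T]\times K'$. A direct computation gives
\begin{equation*}
D_t\bm{v}_h(t,x)-(\bm{\A}\bm{v}_h)(t,x)=\bm{g}_h(t,x),
\end{equation*}
where $\bm{g}_h$ is the sum of the difference quotient of $\bm{f}:=D_t\uu-\bm{\A}\uu$ with terms of the form (difference quotients of $q_{ij}$, $B_j$, $C$) applied to the translated derivatives $D^2\uu$, $D\uu$, $\uu$. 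By the inductive assumption and the regularity of the coefficients, $\bm{g}_h$ is bounded in $\mathcal{C}^{\alpha/2,(k-1)+\alpha}_{\rm loc}(K')$ uniformly in $h$, converging to $\partial_s\bm{f}+[\partial_s,\bm{\A}]\uu$ as $h\to 0$. Moreover $\bm{v}_h(0,\cdot)$ is bounded in $\mathcal{C}^{2+(k-1)+\alpha}_{\rm loc}(K')$, converging to $\partial_s\uu(0,\cdot)$.

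Since the principal part of $\bm{\A}$ is scalar, $\sum_{i,j}q_{ij}D^2_{ij}$, and the coupling occurs only at orders $\le 1$, classical interior Schauder estimates for scalar parabolic equations applied componentwise (treating the lower-order coupling as part of the source) yield a bound on $\bm{v}_h$ in $\mathcal{C}^{1+\alpha/2,2+(k-1)+\alpha}([0,T]\times K;\R^m)$ uniform in $h$. A standard compactness/Arzel\`a-Ascoli argument, combined with the pointwise convergence $\bm{v}_h\to\partial_s\uu$, shows that $\partial_s\uu$ belongs to $\mathcal{C}^{1+\alpha/2,2+(k-1)+\alpha}_{\rm loc}([0,T]\times\Omega;\R^m)$. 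Since the direction $s$ was arbitrary, this gives $\uu\in\mathcal{C}^{1+\alpha/2,2+k+\alpha}_{\rm loc}([0,T]\times\Omega;\R^m)$. The commutation $D_tD^{\beta}_x\uu=D^{\beta}_xD_t\uu$ for $|\beta|\le k$ and the statement $D_t\uu\in\mathcal{C}^{\alpha/2,k+\alpha}_{\rm loc}$ then follow by writing $D_t\uu=\bm{\A}\uu+\bm{f}$ and differentiating: the right-hand side has the claimed regularity by the just-established spatial regularity of $\uu$ and the regularity of the coefficients and of $\bm{f}$.

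The main obstacle is the careful justification of the difference-quotient limit: specifically, ensuring that the inductive Schauder bounds propagate up to $t=0$ (which relies on the matching regularity of the initial datum $\uu(0,\cdot)\in\mathcal{C}^{2+k+\alpha}_{\rm loc}$) and that the interior estimate can indeed be applied componentwise despite the first-order coupling. This coupling is harmless because one may treat the off-diagonal first-order and zeroth-order terms as part of $\bm{g}_h$ and iterate: at each step of the induction the coupling terms involve only derivatives of $\uu$ of order strictly less than what is being proved to exist, so no circularity arises.
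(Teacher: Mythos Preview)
Your difference-quotient argument is sound and is the classical route to higher interior regularity: the inductive step, the treatment of the coupling as part of the source (which works because $D\bm v_h$ is already controlled at the previous level), and the compactness passage are all standard and correctly set up. The only delicate point you flag yourself---that the Schauder bounds must carry a priori constants uniform in $h$ and must hold up to $t=0$ via the compatible initial datum---is indeed where the work lies, and it is classical for scalar equations.

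The paper, however, takes a much shorter route. It does not redo any difference-quotient argument. Instead it invokes \cite[Theorem A.1]{AAL-1}, which is exactly the present statement but for the operator $\bm{\A}_0$ obtained from $\bm{\A}$ by deleting the off-diagonal first-order coupling $\bm{\mathcal B}\bm w=\sum_i\widehat B_iD_i\bm w$. Since $\bm{\mathcal B}$ maps $\mathcal{C}^{0,h+\alpha}_{\rm loc}$ into $\mathcal{C}^{0,h-1+\alpha}_{\rm loc}$, one simply moves $\bm{\mathcal B}\uu$ to the right-hand side: starting from $\uu\in\mathcal{C}^{1+\alpha/2,2+\alpha}_{\rm loc}$ one gets $D_t\uu-\bm{\A}_0\uu\in\mathcal{C}^{\alpha/2,1+\alpha}_{\rm loc}$, so the cited result gives $\uu\in\mathcal{C}^{1+\alpha/2,3+\alpha}_{\rm loc}$, and one iterates up to level $k$. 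What your approach buys is self-containment (no appeal to \cite{AAL-1}); what the paper's approach buys is a three-line proof that isolates exactly the new ingredient---handling the first-order coupling---as a pure bootstrap.
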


\begin{proof}
The statement has been proved in \cite[Theorem A.1]{AAL-1} in the case when $\widehat B_i=0$ for every $i=1,\ldots,d$.
To prove it in the general case, denote by $\bm{\mathcal{B}}$ the operator defined on smooth functions $\bm{w}$ by $\bm{\mathcal B}\bm{w}=\sum_{i=1}^d\widehat B_iD_i\bm{w}$ and set $\bm{\A}_0=\bm{\A}-\bm{\mathcal B}$.
It is clear that $\bm{\mathcal B}$ maps $C^{0,h+\alpha}_{\rm loc}([0,T]\times\Omega;\R^m)$ into $C^{0,h-1+\alpha}_{\rm loc}([0,T]\times\Omega;\R^m)$ for every $h\in\N$. 
Since $\uu\in C^{1+\alpha/2,2+\alpha}_{\rm loc}([0,T]\times\Omega;\R^m)$, it follows that $D_t\uu-\bm{\A}_0\uu\in
C^{\alpha,1+\alpha}_{\rm loc}([0,T]\times\R^d;\R^m)$. Hence, \cite[Theorem A.1]{AAL-1} implies that $\uu\in C^{1+\alpha/2,3+\alpha}_{\rm loc}([0,T]\times\Omega)$. Iterating this argument, in a finite number of steps, we get the assertion. 
\end{proof}

\begin{thm}
\label{thm-A2}
Let $\uu\in C^{1+\alpha/2,2+\alpha}_{\rm loc}((0,T]\times\Rd;\R^m)$ solves the differential
equation $D_t\uu =\bm{\A}\uu +\bm{g}$ in $(0,T]\times\Rd$ for some $\bm{g}\in \mathcal{C}_b^{\alpha/2,\alpha}((0,T]\times\Rd;\R^m)$. For any $\tau\in (0,T)$ and any pair of bounded open sets $\Omega_1$ and $\Omega_2$ such that $\overline{\Omega_1}\subset\Omega_2$, there exists a positive constant $c$, depending on $\Omega_1$, $\Omega_2$, $\tau$ and $T$, but being independent of $\uu$, such that
\begin{equation*} 
\|\uu\|_{C_b^{1+\alpha/2,2+\alpha}((\tau,T)\times\Omega_1;\R^m)}\le c(\|\uu\|_{C_b((\tau/2,T)\times\Omega_2;\R^m)}+ \|\bm{g}\|_{\mathcal{C}_b^{\alpha/2,\alpha}((\tau/2,T)\times\Omega_2;\R^m)}).
\end{equation*}
\end{thm}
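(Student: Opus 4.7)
The plan is to reduce the statement to the classical scalar interior Schauder estimates for parabolic equations with bounded Hölder coefficients, exploiting the key structural observation that the principal part of $\bm{\mathcal A}$ is the \emph{scalar} operator $\mathcal{A}_0=\sum_{i,j=1}^d q_{ij}D^2_{ij}$ acting componentwise on $\uu$. Writing the system componentwise, each component $u_k$ satisfies
\begin{equation*}
D_t u_k - \mathcal{A}_0 u_k = h_k,\qquad h_k := g_k + \sum_{j=1}^d\sum_{l=1}^m (B_j)_{kl}D_j u_l + \sum_{l=1}^m C_{kl}u_l.
\end{equation*}
On the compact closure $\overline{\Omega_2}$, Hypotheses \ref{base}(i)--(ii) guarantee that the coefficients of $\mathcal{A}_0$, $B_j$ and $C$ have finite $\mathcal{C}^{\alpha}_b$-norms and $\mathcal{A}_0$ is uniformly elliptic; in particular the classical scalar interior Schauder estimate (see e.g.\ \cite{LSU}) is available on cylinders contained in $(\tau/2,T)\times\Omega_2$.

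First I would fix a finite chain of nested open sets $\Omega_1=\Omega^{(0)}\subset\Omega^{(1)}\subset\cdots\subset\Omega^{(N)}=\Omega_2$ with $\overline{\Omega^{(j-1)}}\subset\Omega^{(j)}$ and times $\tau=\tau_0>\tau_1>\cdots>\tau_N=\tau/2$. Applying the scalar Schauder estimate to each $u_k$ between two consecutive cylinders in the chain gives constants $c_j>0$, depending only on the geometry and on the $\mathcal{C}^\alpha_b$-norms of the $q_{ij}$ on $\overline{\Omega_2}$, such that
\begin{equation*}
\|u_k\|_{\mathcal{C}^{1+\alpha/2,2+\alpha}_b((\tau_{j-1},T)\times\Omega^{(j-1)})}\le c_j\big(\|u_k\|_{C_b((\tau_j,T)\times\Omega^{(j)})}+\|h_k\|_{\mathcal{C}^{\alpha/2,\alpha}_b((\tau_j,T)\times\Omega^{(j)})}\big).
\end{equation*}
The forcing term $\|h_k\|_{\mathcal{C}^{\alpha/2,\alpha}_b}$ is in turn controlled by $\|\bm g\|_{\mathcal{C}^{\alpha/2,\alpha}_b}$ plus a multiple (depending on the $\mathcal{C}^\alpha_b$-norms of $B_j$ and $C$ on $\overline{\Omega_2}$) of $\|\uu\|_{\mathcal{C}^{\alpha/2,1+\alpha}_b((\tau_j,T)\times\Omega^{(j)})}$; thus the first-order coupling injects a full spatial derivative of $\uu$ in Hölder norm into the right-hand side.

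To close the estimate I would invoke the standard parabolic interpolation inequality: for any $\varepsilon>0$ and any pair of nested cylinders $Q\subset Q'$ there exists $C_\varepsilon>0$ such that
\begin{equation*}
\|\uu\|_{\mathcal{C}^{\alpha/2,1+\alpha}_b(Q)}\le \varepsilon\, \|\uu\|_{\mathcal{C}^{1+\alpha/2,2+\alpha}_b(Q')}+C_\varepsilon \|\uu\|_{C_b(Q')}.
\end{equation*}
Substituting this into the previous inequality, with $Q=(\tau_j,T)\times\Omega^{(j)}\subset Q'=(\tau_{j+1},T)\times\Omega^{(j+1)}$, and summing over $k=1,\ldots,m$, one obtains a self-improving estimate relating the $\mathcal{C}^{1+\alpha/2,2+\alpha}_b$-norm of $\uu$ on the cylinder indexed by $j-1$ to its value on the cylinder indexed by $j+1$, plus the stated forcing. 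Choosing $\varepsilon$ small enough and telescoping along the finite chain, a Campanato-type absorption moves the higher-order Hölder norm from the right- to the left-hand side, yielding the claimed bound with a constant depending on $N$, the $c_j$'s, $\tau$, $T$ and the $\mathcal{C}^\alpha_b$-norms of the coefficients on $\overline{\Omega_2}$.

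The main obstacle is precisely this absorption step: because the coupling between components is of first order, the scalar Schauder estimate alone does not close, and one must iterate on a finite nested chain of cylinders and balance the interpolation parameter $\varepsilon$ against the product of Schauder constants $c_1\cdots c_N$. Once the scalar Schauder estimate and the interpolation inequality are taken as black boxes, the remaining work is purely formal bookkeeping on the chain of cylinders.
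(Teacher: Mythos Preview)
The paper does not actually prove this theorem: the appendix simply refers to \cite[Theorem A.2]{AALT}. So there is no in-paper argument to compare against line by line.

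Your approach is sound and is one standard route to such interior estimates. Two comments, one substantive and one about efficiency.

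The substantive one concerns the absorption step, which is stated a bit loosely. Telescoping along a \emph{finite} chain leaves a residual term $(C\varepsilon)^N\|\uu\|_{\mathcal C^{1+\alpha/2,2+\alpha}_b((\tau/2,T)\times\Omega_2)}$ on the right; this quantity is finite because $\uu\in C^{1+\alpha/2,2+\alpha}_{\rm loc}$, but it is not controlled by the data of the theorem, so the constant you obtain would depend on $\uu$. What actually closes the argument is the iteration lemma (Giaquinta's ``simple lemma'') applied to a \emph{continuous} one-parameter family of cylinders $Q_\rho$: from $\Phi(\rho)\le\tfrac12\Phi(\rho')+A(\rho'-\rho)^{-\gamma}+B$ for all $\rho<\rho'$ one concludes $\Phi(\rho_0)\le c\big(A(\rho_1-\rho_0)^{-\gamma}+B\big)$, with the residual $\Phi$-term driven to zero in the infinite iteration. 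Your phrase ``Campanato-type absorption'' suggests you have this in mind, but the finite discrete chain as written does not deliver it.

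On efficiency: you are working harder than necessary. On $[\tau/2,T]\times\overline{\Omega_2}$ all coefficients of $\bm{\A}$ are bounded and $\alpha$-H\"older, and the principal symbol $\big(\sum_{i,j}q_{ij}\xi_i\xi_j\big)I_m$ is strongly parabolic. The classical interior Schauder estimates for parabolic \emph{systems} (e.g.\ \cite[Chapter~VII]{LSU}, \cite{friedman}, \cite{eidelman}) already absorb all lower-order terms, including first-order coupling between components, so a single application gives the theorem directly without any decoupling or reabsorption. Your componentwise scalar reduction is a valid alternative, but it manufactures a difficulty (the reabsorption) that the system theory already handles.
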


\section*{Declarations}

%\subsection*{Acknowledgments} The authors would like to thank Professor A. Lunardi and  Professor E. Priola for many useful discussions and comments.

\subsection*{Fundings} The authors are members of GNAMPA (Gruppo Nazionale per l’Analisi Matematica, la Probabilit\`a
e le loro Applicazioni) of the Italian Istituto Nazionale di Alta Matematica (INdAM).

\subsection*{Research Data Policy and Data Availability Statements} Data sharing not applicable to this article as no datasets were generated or analysed during the current study.

\end{document}